\renewcommand{\p@enumi}{theenumi-}
\renewcommand{\@fnsymbol}[1]{\@alph{#1}}
\newcommand{\bbr}{\mathbb{R}}
\newcommand{\bbe}{\mathbb{E}}
\newcommand{\bbn}{\mathbb{N}}
\newcommand{\bbp}{\mathbb{P}}
\newcommand{\bbg}{\mathbb{G}}
\newcommand{\bbf}{\mathbb{F}}
\newcommand{\ci}{\citeasnoun}
\newcommand{\fil}{\mathcal{F}}
\newcommand{\fcal}{\mathcal{F}}
\newcommand{\gcal}{\mathcal{G}}
\newcommand{\hcal}{\mathcal{H}}
\newcommand{\pcal}{\mathcal{P}}
\newcommand{\ecal}{\mathcal{E}}
\newcommand{\bcal}{\mathcal{B}}
\newcommand{\lcal}{\mathcal{L}}
\newcommand{\ccal}{\mathcal{C}}
\newcommand{\acal}{\mathcal{A}}
\newcommand\independent{\protect\mathpalette{\protect\independenT}{\perp}}
\def\independenT#1#2{\mathrel{\rlap{$#1#2$}\mkern2mu{#1#2}}}
\newtheorem{lemma}{Lemma}[section]
\newtheorem{proposition}[lemma]{Proposition}
\newtheorem{theorem}[lemma]{Theorem}
\newtheorem{definition}[lemma]{Definition}
\newtheorem{example}[lemma]{Example}
\newtheorem{remark}[lemma]{Remark}
\newtheorem{assumption}[lemma]{Assumption}
\begin{document}

\title{Stochastic Mortality Models: \\ An Infinite-Dimensional Approach}
\author{Stefan Tappe ~~~~~~  Stefan Weber\\[0.7ex] \textit{Leibniz Universit{\"a}t Hannover \thanks{Leibniz Universit\"at Hannover, Institut f\"ur Mathematische Stochastik, Welfengarten 1,
30167 Hannover, Germany. E-mail: Stefan Tappe {\tt <tappe$@$stochastik.uni-hannover.de>},  Stefan Weber {\tt <sweber$@$stochastik.uni-hannover.de>}. }} }
\date{}
\maketitle

\begin{abstract}
Demographic projections of future mortality rates involve a high level of uncertainty and require stochastic mortality models. The current paper investigates forward mortality models driven by a (possibly infinite dimensional) Wiener process and a compensated Poisson random measure. A major innovation of the paper is the introduction of a family of processes called forward mortality improvements which provide a flexible tool for a simple construction of stochastic forward mortality models. In practice, the notion of mortality improvements are a convenient device for the quantification of  changes in mortality rates over time that enables, for example, the detection of cohort effects. 

We show that the forward mortality rates satisfy Heath-Jarrow-Morton-type consistency conditions which translate to the forward mortality improvements. While the consistency conditions of the forward mortality rates are analogous to the classical conditions in the context of bond markets, the conditions of the forward mortality improvements possess a different structure: forward mortality models include a cohort parameter besides the time horizon; these two dimensions are coupled in the dynamics of consistent models of forwards mortality improvements. In order to obtain a unified framework, we transform the systems of It\^o-processes which describe the forward mortality rates and improvements: in contrast to term-structure models, the corresponding stochastic partial differential equations (SPDEs) describe the random dynamics of two-dimensional surfaces rather than curves.
\end{abstract}

\vspace{0.2cm}
\textbf{Key words:} Mortality, longevity, forward mortality, Heath-Jarrow-Morton, mortality improvements, dynamic point processes, stochastic partial differential equations (SPDEs)

\noindent  \\ \vspace{-0.2cm}\\
\textbf{AMS Subject Classification (2010):} 91D20, 60H15
 \vspace{-0.3cm}

\section{Introduction}
\label{intro}
Actuarial mathematics is often a pragmatic and simplified approach to reality.  Classical life insurance mathematics, for example, is concerned with the valuation of insurance products and the computation of reserves and is based on the idea of pooling (formalized by the equivalence principle). It requires the availability of suitable projections of the mortality of the insured individuals. In practice, insurance companies typically use deterministic mortality tables which are constructed from past mortality data and include safety margins. For standard life insurance products like term life insurance or annuities, projections are needed that stretch over various decades. 

While deterministic mortality tables constitute a rather substantial simplification of reality, insurers and actuaries were always aware of the fact that demographic projections of future mortality rates involve a high level of uncertainty. Instead of attempting to correctly predict future mortality rates, actuarial practice implements a prudent risk management scheme that involves substantial safety margins. While customers of life insurance companies are typically overcharged, the mechanism is fair in the sense that surpluses are redistributed to the insured. Actuarial life tables must not be interpreted as models of actual mortality rates, but rather as specific technical tools inside actuarial practice.

In reality, demographic projections of future mortality rates involve a high level of uncertainty. This has, for example, been observed by \ci{booth}. Understanding mortality is thus not the same as analyzing or constructing insurance life tables and requires stochastic models of mortality rates and of mortality projection mechanisms. The current article focuses on stochastic forward mortality models (see e.g. \ci{milevsky}, \ci{dahl}, \ci{miltersen}, \ci{cairns}, \ci{Bauer}, \ci{Barbarin}, \ci{norberg}, \ci{BBK}, \ci{zhu-bauer-geneva}, \ci{zhu-bauer}). These are closely related to intensity models of mortality which were discussed by \ci{biffis}, \ci{biffis2}, \ci{hainaut}, \ci{luciano}, and  \ci{schrager}, among others. Our main contribution is to provide a mathematically rigorous and transparent framework for this approach that generalizes and substantially clarifies previous contributions in the literature. 

Stochastic mortality and mortality projection models can be employed as a  framework for analyzing the reliability, robustness and cost of current actuarial practice. They might also improve demographic projections and provide a better basis for the management of mortality and longevity risk. Finally, stochastic mortality models are needed for the computation of the market-consistent value of insurance liabilities -- a quantity that is particularly important for management and reporting purposes. Stochastic mortality and mortality projection models constitute also essential ingredients for the construction of various re-insurance or capital market solutions that facilitate the mitigation of mortality and longevity risk. Recent product innovations include mortality swaps, longevity bonds, and q-forwards.
 
\paragraph{Contribution and Outline:}
 
The current paper investigates stochastic forward mortality models. Section~\ref{DEP} provides a motivation for our approach by introducing a dynamic point process model of the mortality of individuals (cf. \ci{bremaud}, \ci{BR02}). We define forward mortality processes and rates. A major innovation of the paper is the introduction of a family of processes called forward mortality improvements which constitute a flexible tool that enables a simple construction of stochastic forward mortality models. Additionally, in practice, the notion of mortality improvements provides a convenient instrument to quantify changes in mortality rates over time that allows, for example, the detection of cohort effects (\ci{HannoverRe}). 

Section~\ref{sec:lln} provides conditional laws of large numbers as a rational for the significance of forward mortality models. Although implicit in many papers on the subject, to the best of our knowledge, these theorems have never  rigorously been proven in the literature. A special case of forward mortality models are intensity-based models which allow an alternative description of their probabilistic dynamics via compensators. This point of view (that is preferred by some authors to the approach taken in the current paper) is explained in Section~\ref{sec:comp}. 

The suggested forward mortality models can either be interpreted as describing the real-world (see e.g. \ci{zhu-bauer}) or risk-neutral dynamics (see e.g. \ci{biffis}, \ci{BiffisMil}, \ci{biffis2}). In both cases, we can identify a martingale condition, see Remark~\ref{rem-martingale} below, that implies consistency conditions on the dynamics of any \emph{`code-book'} in the sense of \ci{carmona}. Section~\ref{IDF} describes consistency conditions for the forward mortality rates and improvements. We also present a unified modeling framework on the basis of SPDEs. The proofs of the results are deferred to the Appendix. 

While the consistency conditions of the forward mortality rates are analogous to the classical conditions in the context of bond markets, the conditions of the forward mortality improvements possess a different structure: forward mortality models include a cohort parameter besides the time horizon; these two dimensions are coupled in the dynamics of consistent models of forwards mortality improvements.

In order to obtain a unified framework, we transform the systems of It\^o-processes which describe the forward mortality rates and forward mortality improvements. In contrast to term-structure models, the corresponding stochastic partial differential equations (SPDEs) describe the random dynamics of two-dimensional surfaces rather than curves. These surfaces are parametrized by cohorts and time horizons (see also \ci{BiffisMil} for a related study in the context of random fields). Most interesting are consistent models of forward mortality improvements which induce stochastic forward mortality models. Moreover, the shape of the forward mortality surfaces requires Hilbert spaces of functions that are not covered by the literature on interest rate models, see Definition~\ref{def-space} and Example~\ref{ex-space}.

Our results are illustrated in the context of a L\'evy driven Gompertz-Makeham model of forward mortality in Section~\ref{sec-example}.

\section{Definitions and Elementary Properties}\label{DEP}

 We start by introducing the basic notions of our stochastic mortality model. We denote by $(\Omega, \gcal, \bbp)$ a sufficiently rich probability space on which all random variables and processes are defined. The probability measure $\bbp$ can either be interpreted as the real-world measure or a pricing measure depending on the context in which the theory will be applied.  
 
 The lifetime of an individual is characterized by its date of birth $c$ and its random time of death. As common in life insurance mathematics, we encode the cohorts of individuals by their age and thus define $x=-c\in\bbr$ which can be interpreted as the (hypothetical) age at time $0$. The death of the individual occurs at a  $\gcal$-measurable random time $\tau^x: \Omega \to (-x, \infty)$. Equivalently, the time of death is described by the survival indicator, a stochastic process that is defined by 
$$  N_t(x) (\omega) = \left\{\begin{array}{ll}  1, &  t < \tau ^x (\omega) , \\ 0, & t\geq  \tau ^x (\omega)  \end{array} \right.  \quad \quad \quad(\omega \in \Omega, t\in\bbr_+) . $$ 

 An established approach to modeling the probabilistic evolution of the occurrence of events are intensity-based models. A particularly convenient case are Cox process models, see \ci{bremaud}, that assume that intensities are driven by stochastic covariates. In the current paper, we focus on such models, but introduce these on a slightly more abstract level as described in \ci{bremaud} and \ci{BR02}. For a detailed analysis of filtration enlargements and further references see also \ci{jeanblanc}.

We restrict attention to the time period $\bbr_+ = [0,\infty)$.  \emph{Systematic information} is modeled by 
a filtration $\bbf = (\fcal_t)_{t \in \bbr_+}$ -- which is sometimes also called the \emph{background information}. In a classical Cox process model this filtration corresponds to the family of sigma-algebras that are generated by the history of the stochastic covariate processes. Intuitively, $\bbf$ contains all information that determines the likelihood of death events. As we will see below, it shall, however, be assumed that $\bbf$ does not include information about the exact times of death of specific individuals. For technical reasons, we assume that $(\Omega, \gcal, \bbf, \bbp)$ satisfies the usual conditions. 

\begin{itemize}
\item The conditional probability that an individual born at time $-x$ survives until time $t\geq 0$ given the background information is described by the  \emph{$\bbf$-survival process} of $\tau^x$:
\begin{align*}
G_t(t,x) := \bbp ( \tau^x > t \,|\, \fil_t ) , \quad t \geq -x\vee 0 .
\end{align*} 
This process is known to be an important ingredient in the theory of point processes. Under suitable technical assumptions, $G_t(t,x)$ equals the fraction of individuals born at date $-x$ that survive until date $t$. The precise result will be stated in Theorem~\ref{LLN}.

\item Another object of particular interest for firms that are exposed to mortality and longevity risk -- in particular for pension funds and reinsurance companies, cf. \ci{HannoverRe} -- is the best prediction at date $t$ of the fraction of individuals born at date $-x$ that survive until a future date $T$. Again, in Theorem~\ref{LLN} we will show under suitable technical conditions that this prediction equals  the \emph{$\bbf$-forward survival process} that we define as 
\begin{align}\label{def-G-official}
G_t (T,x)  :=  \bbp ( \tau^x > T \,|\, \fil_t )  = \bbe [G_T(T,x) \, | \, \fil_t ], \quad T\geq -x\vee t.
\end{align} 
\end{itemize}

\begin{remark}\label{rem-martingale}
It is apparent from its definition that, for fixed $x$ and $T$, the forward survival process $(G_t(T,x))_t$ is a martingale with respect to the probability measure $\bbp$. This martingale property is very natural:
\begin{itemize}
\item If $\bbp$ is the real-world measure, then the random variable $G_t(T,x)$ describes the conditional probability that an individual born at date $-x$ survives until date $T$ given the information at date $t$; if the available information grows as time $t$ increases, the corresponding process of such conditional probabilities is, of course, a $\bbf$-martingale. A detailed discussion of this point of view can be found in \ci{zhu-bauer}.

In addition, Theorem~\ref{LLN} provides a slightly different interpretation of the process. As mentioned before, under technical conditions, $G_t(T,x)$ is the best time-$t$-prediction of the fraction of individuals born at date $-x$ that survive until the future date $T$ (the survival ratio). If $t$ increases, the available information grows which apparently implies the martingale property.
\item If the money market account is chosen as the num\'{e}raire with deterministic interest rates $r(t)$, $t\geq 0$, and $\bbp$ is a pricing measure, then $\exp \left( -  \int_t^T r(s) ds   \right) \cdot G_t(T,x)$ can be interpreted as the price at time $t$ of a survivor bond that pays at time $T$ an amount equal to the survival ratio.
\end{itemize}
\end{remark}

 Standard technical tools from the theory of point processes are hazard processes and intensities. We adapt these notions in the context of mortality models. We start with a technical assumption.
\begin{assumption}\label{StAss0}~
 For all $t\in \bbr_+$, $T\geq  -x\vee t$, we have that $G_t(T,x) >0$. 
 \end{assumption}
 
 \begin{remark}
Assumption~\ref{StAss0} guarantees that the random times $\tau^x$ are not stopping times with respect to  $\bbf$ and is standard in the context of many intensity models, see e.g. \ci{BR02}.  The filtration $\bbf$ contains background information about the likelihood of death events, but not their actual occurrence;  in the context of Cox processes this typically means that $\bbf$ is generated by the covariates, but not by the individual death events. Within our context of mortality models, we will actually demonstrate in Section~\ref{sec:lln}  that also predictions of survival ratios based on all available information lead to the $\bbf$-forward survival process, see Theorem~\ref{LLN}.

Note that Assumption~\ref{StAss0}  implies that a maximal age does not exist at which all individuals are necessarily dead. While unrealistic, this limitation is mitigated, if conditional survival properties are very low for old individuals, and does therefore not pose any serious restriction.
\end{remark}

\begin{definition}\label{def-hazard}
Let $-x \in \bbr$ be arbitrary.
The family of \emph{$\bbf$-forward hazard processes} is defined by $$ [0,T] \ni t \mapsto \Gamma_t(T,x) = - \ln G_t(T,x), \quad T \geq -x\vee 0 .$$ The family of \emph{$\bbf$-conditional hazard processes} is defined by $t\mapsto \Gamma_t(t,x)$, $t\geq - x\vee 0$.
\end{definition}

\begin{definition}\label{def-rates}
Let $-x \in \bbr$ be arbitrary.
\begin{enumerate}
\item If $t\mapsto \Gamma_t(t,x)$, $ t \geq - x\vee 0 $,  is absolutely continuous with respect to Lebesgue measure, i.e., 
\begin{equation}\label{spot} 
\Gamma_t(t,x) = \Gamma_0(0,x)  + \int_{ - x\vee 0} ^t \gamma_s(x) ds
\end{equation}
for a $\bbf$-optional process $\gamma (x) = (\gamma_t (x))_{t\geq -x\vee 0}$, then $\gamma (x)$ is called the \emph{$\bbf$-spot mortality rate} (or sometimes \emph{intensity}). In (\ref{spot}), we  set $\Gamma_0(0,x) := 0$ for $x \leq 0$.

\item If $T\mapsto \Gamma_t(T,x)$, $ T \geq - x\vee t$ for $t \in \bbr_+$, is absolutely continuous with respect to Lebesgue measure, i.e., 
\begin{equation}\label{m-rate}
\Gamma_t(T,x) = \Gamma_t(-x \vee t,x) + \int_{ - x\vee t}^T \mu_t(s,x) ds
\end{equation}
for $\bbf$-optional processes $\mu(T,x) = (\mu_t(T,x))_{t \in [0,T]}$, $T\geq -x\vee 0$, then the processes $\mu(T,x)$ are called the \emph{$\bbf$-forward mortality rates}. In (\ref{m-rate}), we set $\Gamma_t(-x,x) := 0$ for $t \leq -x$.

\item \label{def-j} If for $t \in \bbr_+$ the $\bbf$-forward mortality rates $\mu(T,x)$ exist for $ T \geq -x \vee t$, and if $h \mapsto \mu_t(T+h, x-h)$ is absolutely continuous with respect to Lebesgue measure, i.e.,
$$  \mu_t(T+h, x-h) -  \mu_t(T,x) = -\int _0^{h} j_t(T+u, x-u) du $$
for $\bbf$-optional processes $j(T,x) = (j_t(T,x))_{t \in [0,T]}$, $T \geq -x  \vee 0$, 
then the processes $j(T,x)$ are called \emph{$\bbf$-forward mortality improvements}.
\end{enumerate}
\end{definition}

 We will investigate the dynamics of the forward mortality rates and improvements in Section~\ref{IDF}. The martingale condition in Remark~\ref{rem-martingale} imposes restrictions on their evolution that will be studied in detail.

 \begin{remark}\mbox{} \begin{enumerate}
 \item The conventions $\Gamma_0(0,x) = 0$ for $x \leq 0$ and $\Gamma_t(-x,x) = 0$ for $t \leq -x$ in \eqref{spot} and \eqref{m-rate}, respectively, encode that all individuals are supposed to be alive before their date of birth $-x$ with probability $1$.  By Definition~\ref{def-hazard} the forward hazard process needs thus to be $0$ in these cases.

 \item Although we assume only the existence of a $\bbf$-optional $\bbf$-spot mortality rate, one could require that the spot mortality rate is predictable. This is implied by the continuity of the process $(\Gamma_t(t,x))_t$. A given  $\bbf$-optional measurable spot mortality rate $\gamma$ is not necessarily predictable, but defines a measure $ \gamma_s(x)(\omega)ds \bbp (d\omega)$  on the optional $\sigma$-algebra $\mathcal{O}$, which contains the predictable $\sigma$-algebra $\pcal$. The Radon-Nikodym-derivative of its restriction to $\pcal$ with respect to $\bbp(d\omega) dt $ is a $\bbf$-predictable spot mortality rate satisfying equation~\eqref{spot}.   
 \item The spot and forward mortality rates can be interpreted as the infinitesimal rate at which individuals die given the current information. In order to be more precise, if $T\geq -x \vee t$ and the conditions of Definition~\ref{def-rates} are satisfied, then
$$\bbp \left( T < \tau^x \leq  T+\epsilon \,|\, \fil_t \right)   = \mu_t(T,x)  \epsilon +   o(\epsilon)\quad  \mbox{ as } \epsilon \to 0 . $$ 

 \item The $\bbf$-forward mortality improvements $j(T,x)$ quantify the infinitesimal improvements of $\bbf$-forward mortality rates across cohorts. Intuitively, the increment
 $$  \mu_t(T+du, x-du) -  \mu_t(T,x) = - j_t(T,x) du   $$
 describes the changes of forward mortalities for two cohorts with identical age at two time horizons; cohort $x-du$ is of age $T+x$ at time $T+du$, while cohort $x$ is of the same age at time $T$. The forward mortality improvement $j_t(T,x)$ is positive, if the forward mortality rate decreases, and negative, if the forward mortality rate increases.
 
 Stochastic dynamics of the $\bbf$-forward mortality improvements $j$ can capture future random cohort effects. A framework for modeling their stochastic evolution is discussed in Section~\ref{IDF} below.
\end{enumerate} 
 \end{remark}

\section{A Conditional Law of Large Numbers}\label{sec:lln}

The principle of pooling is key to insurance mathematics. It states that in a very large population idiosyncratic risk per insured almost vanishes. This is traditionally used as the basis for the computation of risk premia of insurance products. In this section we apply the idea of pooling within the context of our model and prove a conditional law of large numbers showing that the $\bbf$-survival and $\bbf$-forward survival processes capture the systematic risk associated with stochastic mortality.

We fix a date of birth $-x\in \bbr$ and consider a large homogeneous family of individuals born at this date. Our goal is to compute the fraction of individuals alive at a future date $t$ as well as the best time-$t$-prediction of the fraction of individuals alive at time $T>t$; the best prediction will be based on the full information available at $t$ that does not only include the background information, but also the information about all death events. We first state an assumption that formalizes that we are considering a homogeneous population.
\begin{assumption}\label{StAss}
Let $\tau^x$ and $\hat\tau ^x$ be the times of death of two arbitrary individuals born at time $-x\in\bbr$. Then $\bbp ( \tau^x > t  \,|\, \fil_t ) = \bbp ( \hat\tau^x > t  \,|\, \fil_t )$ almost surely for all $t\in \bbr_+$. 
 \end{assumption}
Limit theorems for a population require a large pool of individuals. We thus consider a countably infinite family of individuals born at date $-x$.
\begin{remark} 
Mathematically, the existence of an infinite, but countable collection of random times on a sufficiently rich probability space $(\Omega, \gcal, \bbp)$ with given hazard process satisfying Assumption~\ref{StAss} can easily be shown using the canonical construction of random times, a notion well-known in the literature on reduced-form credit risk models.  Suppose that $\Gamma_t(t, x)$ is an increasing $\bbf$-adapted process. Letting $\varepsilon^n$ be a sequence of independent unit exponentially distributed random variables independent of $\fil_\infty := \bigvee_{t\in \bbr_+} \fil_t$, the random times $$  \tau^{x,n}:= \inf\{t: \Gamma_t(t,x)>\varepsilon^n   \}  $$
  are conditionally independent given $\fil_\infty$, each with hazard process $\Gamma_t(t,x).$
\end{remark}

\begin{definition}\label{CondI}
Letting $x\in\bbr$, a family of associated death times $(\tau^{x,n})_{n\in\bbn}$ is called \emph{$\bbf$-doubly stochastic conditionally independent  ($\bbf$-DSCI)}, if 
\begin{enumerate}
\item\label{CondI1} the sequence  $(\tau^{x,n})_{n\in\bbn}$ is doubly stochastic, i.e., $$\bbp(\tau^{x,n} >t \,|\, \fil_{t}) =\bbp(\tau^{x,n} >t \,|\, \fil_\infty) \mbox{ for all } t\in \bbr_+, n\in \bbn; $$
\item\label{CondI2} the sequence  $(\tau^{x,n})_{n\in\bbn}$ is $\fil_\infty$-conditionally independent, i.e.,  for any finite $J\subseteq \bbn$, $t_j\in\bbr_+$, $j\in J$: $$\bbp(\tau^{x,j} >t_j \mbox{ for all } j \in J \,|\, \fil_\infty) = \prod_{j\in J}  \bbp(\tau^{x,j} >t_j \,|\, \fil_\infty).$$
\end{enumerate}
This definition can canonically be extended to families of death times of countably many different cohorts.
\end{definition}

Property (i) of Definition~\ref{CondI} states that the probability of death of an individual up to time $t$ depends only on the background information up to time $t$, but not on background information arriving later. In the special context of Cox process intensities this means, for example, that the intensity at time $t$ is a function of the paths of the factor processes only up to time $t$.

Property (ii) formalizes that death times are independent given the background information.  Note that this excludes contagion effects in the sense of local or global (mean-field) interaction. This assumption is relatively innocent in the context of mortality modeling, as long as large-scale epidemic outbreaks are neglected.

\begin{theorem}\label{LLN}
Letting $x\in\bbr$, we denote by $(N^{ n} (x))_{n\in \bbn}$ the survival indicators associated to a family of individuals born at date $-x$ with $\bbf$-DSCI death times. Then for all $t \geq -x \vee 0$:
$$ \lim_{N\to \infty} \frac 1 N \sum_{n=1}^N N^{n}(x)_t  =  G_t(t,x)  \quad \bbp\mbox{--almost surely}.$$
Setting $\gcal_t: = \sigma\{  N^n(x)_s: s\leq t, n\in \bbn   \} \vee \fil_t$, $t\in\bbr_+$, $\bbg = (\gcal_t)_{t \in \bbr_+}$ is the full information filtration. Then for all $t\in\bbr_+$, $T\geq -x\vee t$:
\begin{eqnarray}\label{eq:lln} 
\lim_{N\to \infty} \frac 1 N \sum_{n=1}^N \bbe[N^{n}(x)_T \,|\, \gcal_t]  =  G_t(T,x)  \quad \bbp\mbox{--almost surely}.
\end{eqnarray}
\end{theorem}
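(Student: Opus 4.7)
The plan is to condition on $\fil_\infty$ throughout and to reduce both limits to a conditional strong law of large numbers for Bernoulli indicators. For each $n$, combining the doubly stochastic property (Definition~\ref{CondI}(i)) with the homogeneity Assumption~\ref{StAss} gives
$$\bbp(\tau^{x,n} > t \,|\, \fil_\infty) \;=\; \bbp(\tau^{x,n} > t \,|\, \fil_t) \;=\; G_t(t,x) \quad \text{a.s.}$$
Property (ii) of Definition~\ref{CondI} then makes $(N^n(x)_t)_{n \in \bbn}$, conditionally on $\fil_\infty$, a sequence of independent Bernoulli variables with common success probability $G_t(t,x)$. A conditional version of Kolmogorov's strong law (applied to the regular conditional distribution given $\fil_\infty$, then integrated) yields $\tfrac{1}{N}\sum_{n=1}^{N} N^n(x)_t \to G_t(t,x)$ almost surely, which is the first claim.

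For the second limit, I would first compute $\bbe[N^n(x)_T \,|\, \gcal_t]$ in closed form for each fixed $n$ by inserting the intermediate $\sigma$-algebra $\gcal_t \vee \fil_\infty$ via the tower property. The $\fil_\infty$-conditional independence of the death times reduces the conditional probability of $\{\tau^{x,n} > T\}$ to one involving only $\fil_\infty$ and $\tau^{x,n} \we t$; Assumption~\ref{StAss0} (ensuring $G_t(t,x)>0$) together with the identity $\bbp(\tau^{x,n} > s \,|\, \fil_\infty) = G_s(s,x)$ derived above then gives the standard Dellacherie-type formula
$$\bbp(\tau^{x,n} > T \,|\, \gcal_t \vee \fil_\infty) \;=\; N^n(x)_t \cdot \frac{G_T(T,x)}{G_t(t,x)}.$$
Conditioning down to $\gcal_t$ and pulling out the $\gcal_t$-measurable factor yields
$$\bbe[N^n(x)_T \,|\, \gcal_t] \;=\; \frac{N^n(x)_t}{G_t(t,x)}\; \bbe[G_T(T,x) \,|\, \gcal_t].$$

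The key remaining step, which I expect to be the main obstacle, is the immersion identity $\bbe[G_T(T,x) \,|\, \gcal_t] = \bbe[G_T(T,x) \,|\, \fil_t] = G_t(T,x)$; equivalently, $\fil_\infty$ must be conditionally independent of $\gcal_t$ given $\fil_t$. I would prove this by a monotone class / $\pi$-$\lambda$ argument: it suffices to verify, for any bounded $\fil_\infty$-measurable $Z$ and any finite product $\prod_i \mathbf{1}_{\{\tau^{x,n_i} > s_i\}}$ with $s_i \leq t$, the factorisation
$$\bbe\!\Big[ Z \prod_i \mathbf{1}_{\{\tau^{x,n_i} > s_i\}} \,\Big|\, \fil_t \Big] \;=\; \bbe[Z \,|\, \fil_t] \cdot \bbe\!\Big[\prod_i \mathbf{1}_{\{\tau^{x,n_i} > s_i\}} \,\Big|\, \fil_t \Big],$$
where DSCI~(ii) factorises the product inside the $\fil_\infty$-conditional expectation, and DSCI~(i) successively collapses each $\fil_\infty$-conditioning back down to $\fil_{s_i}$-conditioning, which lies inside $\fil_t$.

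Once immersion is in place, substituting into the previous display and averaging over $n$ gives
$$\frac{1}{N}\sum_{n=1}^N \bbe[N^n(x)_T \,|\, \gcal_t] \;=\; \frac{G_t(T,x)}{G_t(t,x)} \cdot \frac{1}{N}\sum_{n=1}^N N^n(x)_t,$$
and the right-hand side converges almost surely to $G_t(T,x)$ by the first assertion, completing the proof.
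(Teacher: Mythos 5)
Your proof is correct, and while the first limit is handled exactly as in the paper (conditional strong law for the $\fil_\infty$-conditionally i.i.d.\ Bernoulli indicators, using Assumption~\ref{StAss} and Definition~\ref{CondI}), your treatment of the second limit takes a genuinely different route. The paper never computes $\bbe[N^n(x)_T\,|\,\gcal_t]$ individually: it applies a conditional bounded convergence theorem to interchange the limit with $\bbe[\,\cdot\,|\,\gcal_t]$, invokes the first limit at time $T$ to get $\bbe[G_T(T,x)\,|\,\gcal_t]$, and then reduces this to $G_t(T,x)$ via the conditional independence $\fil_\infty \independent_{\fil_t} \sigma\{N^n(x)_s: s\leq t, n\in\bbn\}$ (Lemma~\ref{CILLN}). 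You instead derive the per-individual key-lemma (Dellacherie-type) formula $\bbp(\tau^{x,n}>T\,|\,\gcal_t\vee\fil_\infty)=N^n(x)_t\,G_T(T,x)/G_t(t,x)$, project down to $\gcal_t$, use the same immersion identity, and then apply the first limit at time $t$. Both arguments hinge on the identical conditional-independence lemma, and your sketch of it matches the paper's Lemma~\ref{CILLN}. The trade-offs: the paper's interchange argument is shorter, needs no division by $G_t(t,x)$ (so Assumption~\ref{StAss0} plays no role in the theorem's proof), and never has to discard the indicators of the other individuals; your route requires Assumption~\ref{StAss0} and an extra conditional-independence reduction (a Kallenberg Proposition~6.8-type step, analogous to the paper's Lemma~\ref{CIComp}, which the paper only develops later for compensators) to justify replacing $\gcal_t\vee\fil_\infty$ by $\fil_\infty\vee\sigma(N^n(x)_s:s\leq t)$ --- in a full write-up you should make that step explicit. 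In exchange, you obtain a strictly stronger intermediate result, namely the explicit $\gcal_t$-conditional survival probability of each individual, which connects the law of large numbers directly to the intensity-based machinery of Section~\ref{sec:comp}.
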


\begin{proof}
See Appendix~\ref{app:lln}.
\end{proof}

Theorem~\ref{LLN} is a law of large numbers. The $\bbf$-conditional survival process describes mortality on the aggregate level of large populations. The quantity $G_t(t,x)$ equals the fraction of individuals born at time $-x$ that survive until $t$. The $\bbf$-forward survival process describes the best predictions of these fractions. As defined in the Theorem, $\bbg = (\gcal_t)_{t \in \bbr_+}$ models the full information that is available at time $t$. It includes both the background information as well as information about the occurrence of all death events up to time $t$. The quantity $G_t(T,x)$ is the time $t$ best estimate of the fraction of individuals born at time $-x$ that survive until $T$. We emphasize that $G_t(T,x)$ is $\fil_t$-measurable, thus depends only on the background information. The rational behind this property is pooling: idiosyncratic risks are not relevant anymore on the aggregate level.  

Theorem~\ref{LLN}  does also allow us to extend the martingale property of  $(G_t(T,x))_t$  from the background filtration $\bbf$ to the full filtration $\bbg$. The bounded convergence theorem allows us to interchange the order of the expectation and the limit in \eqref{eq:lln}. This shows that $G_t(T,x) = \bbe [G_T(T,x) | \gcal_t]$ proving the $\bbg$-martingale property. 

Finally observe that the stochastic process  $(G_t(t,x))_{t \geq -x\vee 0}$ is $\bbp$--almost surely decreasing  by Theorem~\ref{LLN}. This is also apparent from Definition~\ref{CondI}(i), since $G_t(t,x) = \bbp (\tau^{x,1}>t \,|\, \fil_t) = \bbe (N^1(x)_t \,|\, \fil_\infty) $ and $(N^1(x)_t)_{t\in\bbr_+}$ is decreasing.

\begin{remark}
The laws of large numbers refer to large homogeneous populations with cohort $x$ fixed. However, in applications, liabilities and mortality derivatives are usually linked to inhomogeneous pools  that include, for example, various cohorts. Suppose that we are interested in market-consistent valuation and that $\bbp$ is interpreted as a reference measure. In this case, the relevant cohorts need to be weighted in a suitable way. These weights can easily be encoded by Borel measures on the real line, as explained in Section~4 of \ci{BiffisMil}.
\end{remark}

\section{Compensators}\label{sec:comp}

Although quite convenient, the approach described  in Section~\ref{DEP} is not always employed in the literature on doubly-stochastic point processes. An alternative procedure for describing the probabilistic properties of the death times considers the $\bbg$-compensators. In order to facilitate a translation between both concepts, we briefly summarize some basic relationships. 

For simplicity, we restrict attention to individuals born after time $0$, i.e. we assume that $-x\geq 0$. Let $(\tau^{x,n})_{n\in\bbn}$ again be a $\bbf$-DSCI family of death times with $-x\in \bbr_+$. The corresponding survival indicators are denoted by  $(N^{ n} (x))_{n\in \bbn}$. As defined in Theorem~\ref{LLN}, $\bbg$ denotes the full information filtration.  We start by defining the notion of a compensator. Its existence follows from the Doob-Meyer-decomposition theorem. The compensator is unique, up to indistinguishability.
\begin{definition}
A $\bbg$-predictable, right-continuous, increasing process $A^n(x)$ is a $\bbg$-compensator of $\tau^{x,n}$, if  $A^n(x)_{t} = 0$, $ t\leq -x$, and 
$$
1- N^n(x)_t -A^n(x)_t,  \quad  t \geq -x,
$$
is a $\bbg$-martingale.
\end{definition}

The relationship between the $\bbg$-compensators of all death times and the $\bbf$-survival process is now described by the following proposition.

\begin{proposition}\label{MHP} Assume that $(\tau^{x,n})_{n\in\bbn}$ is a $\bbf$-DSCI family of death times with $-x\in \bbr_+$.
Then there exists a $\bbf$-predictable, right-continuous, increasing process $\Lambda(x)$   with the following property:

 $$\Lambda(x)^{\tau^{x,n}} = A^n(x) \mbox{ for all }n\in \bbn,$$ 
 where $\Lambda(x)^{\tau^{x,n}}$ signifies the process $\Lambda(x)$ stopped at $\tau^{x,n}$. The process $\Lambda(x)$  is unique, up to  indistinguishability.
 
 In other words, $\Lambda(x)$ is
a $\bbf$-predictable, right-continuous, increasing process such that  $$  1- N^n(x)_t -\Lambda (x)_{t \wedge \tau^{x,n}}, \quad t\geq -x, $$
is a $\bbg$-martingale for all $n\in \bbn$, and $\Lambda(x)_{t} =0$, $t\leq -x$. This means that $\Lambda(x)$ is a $(\bbf, \bbg)$-martingale hazard process of $\tau^{x,n}$ for any $n\in \bbn$ according to Definition 6.1.1 in \ci{BR02}.

Letting $(\tilde F_t(x))_t$ be the unique $\bbf$-predictable, increasing process with $\tilde F_{t} (x) = 0$, $t \leq -x$, such that $$(1- G_t(t,x) - \tilde F_t(x))_{t\geq -x}$$ is a $\bbf$-martingale, then $\Lambda(x)$ is given by 
\begin{equation}\label{lambda-form}  \Lambda(x)_t = \int_{(-x,t]} \frac {1}{G_{t-}(t-,x)} d \tilde F_t(x) . \end{equation}
If $t\mapsto G_t(t,x)$ is almost surely continuous, then $\tilde F_t(x) = 1- G_t(t,x)$, thus 
\begin{align*}
\Lambda(x)_t = - \int_{(-x,t]} \frac {1}{G_{t}(t,x)} d G_t(t,x). 
\end{align*}
\end{proposition}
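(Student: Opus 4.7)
The plan is to establish the three pieces of the statement in turn: existence and uniqueness of the $\bbf$-predictable compensator $\tilde F(x)$ of the submartingale $1-G_t(t,x)$; verification that $\Lambda(x)$ defined by \eqref{lambda-form} is a $(\bbf,\bbg)$-martingale hazard process; and the simplification in the continuous case.

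First I would show that $t\mapsto G_t(t,x)$ is a bounded $\bbf$-supermartingale on $[-x,\infty)$. For $-x\leq s\leq t$, the $\bbf$-martingale property of $u\mapsto G_u(t,x)$ yields $\bbe[G_t(t,x)\mid\fcal_s]=G_s(t,x)$, and since $T\mapsto G_s(T,x)$ is non-increasing (being a conditional survival probability), $G_s(t,x)\leq G_s(s,x)$. Boundedness by $1$ places this supermartingale in class (D), so the Doob-Meyer decomposition yields a unique $\bbf$-predictable, right-continuous, increasing process $\tilde F(x)$, vanishing up to $-x$, such that $(1-G_t(t,x)-\tilde F_t(x))_{t\geq -x}$ is an $\bbf$-martingale.

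Next I would define $\Lambda(x)$ by \eqref{lambda-form}. Assumption~\ref{StAss0} together with the standard fact that a nonnegative càdlàg supermartingale is absorbed at $0$ once it hits $0$ ensures $G_{t-}(t-,x)>0$ $\bbp$--a.s., so the integrand is well-defined and $\bbf$-predictable (being left-continuous and $\bbf$-adapted); hence $\Lambda(x)$ inherits $\bbf$-predictability, right-continuity, and monotonicity, and vanishes for $t\leq -x$. The central task is to verify that $1-N^n(x)_t-\Lambda(x)_{t\wedge\tau^{x,n}}$ is a $\bbg$-martingale for every $n$, which is the classical identification of the $\bbg$-compensator of an $\bbf$-conditionally independent default time with the $\bbf$-predictable compensator of its survival process normalized by the current survival probability, in the spirit of Chapter~5 and Definition~6.1.1 of \ci{BR02}. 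Using the $\bbf$-DSCI property (Definition~\ref{CondI}) I would obtain, for $s\leq t$,
\[
\bbp(\tau^{x,n}>t\mid\gcal_s)=\mathbf{1}_{\{\tau^{x,n}>s\}}\,\frac{G_s(t,x)}{G_s(s,x)},
\]
which collapses the $\bbg$-martingale identity into an $\fcal_s$-conditional computation; substituting the Doob-Meyer representation $\bbe[\tilde F_t(x)-\tilde F_s(x)\mid\fcal_s]=G_s(s,x)-\bbe[G_t(t,x)\mid\fcal_s]$ then closes the argument. Uniqueness of $\Lambda(x)$ up to indistinguishability follows from uniqueness in Doob-Meyer together with the injectivity of the map $\tilde F(x)\mapsto\Lambda(x)$ induced by \eqref{lambda-form} (one recovers $d\tilde F_t(x)=G_{t-}(t-,x)\,d\Lambda(x)_t$).

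The main obstacle is this central verification: converting the bare $\bbg$-martingale property into the explicit formula \eqref{lambda-form} requires the projection identity from the theory of enlargement of filtrations that turns $\bbg$-conditional expectations of functionals of $\tau^{x,n}$ into $\bbf$-conditional expectations weighted by $1/G_{s-}(s-,x)$, and the delicate point is handling the left-limit and possible jumps of $\tilde F(x)$ correctly. Once this is in place, the continuous-case assertion is immediate: if $t\mapsto G_t(t,x)$ is almost surely continuous then, since it is also almost surely decreasing by Theorem~\ref{LLN}, the process $1-G_t(t,x)$ is continuous, $\bbf$-adapted, and increasing, hence $\bbf$-predictable. Uniqueness in Doob-Meyer then forces $\tilde F_t(x)=1-G_t(t,x)$, and substituting $d\tilde F_t(x)=-dG_t(t,x)$ with $G_{t-}(t-,x)=G_t(t,x)$ in \eqref{lambda-form} yields the stated simplification.
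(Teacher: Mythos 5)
Your existence argument is workable and runs parallel to the paper's, though by a more hands-on route: the paper first shows (via its Lemma~\ref{CIComp}, i.e.\ conditional independence of the other individuals' death indicators) that the $\bbg$-compensator of $\tau^{x,n}$ coincides with the compensator in the smaller filtration $\gcal^n_t=\fil_t\vee\sigma(N^n(x)_s:s\leq t)$, and then simply cites Proposition~6.1.2 of \ci{BR02} for formula \eqref{lambda-form}; you instead propose to re-derive that result directly in the big filtration $\bbg$ from the projection identity $\bbp(\tau^{x,n}>t\,|\,\gcal_s)=\mathbbm{1}_{\{\tau^{x,n}>s\}}G_s(t,x)/G_s(s,x)$. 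That identity is correct here, but note that it already hides the same conditional-independence reduction (your $\gcal_s$ contains the histories of \emph{all} individuals, so the standard single-default key lemma does not apply verbatim), and the step you yourself flag as the obstacle -- projecting $\int_{(s,t]}\mathbbm{1}_{\{u\leq\tau^{x,n}\}}d\Lambda(x)_u$ onto $\fil_s$ using $\bbp(\tau^{x,n}\geq u\,|\,\fil_\infty)=G_{u-}(u-,x)$ and the identity $G_{u-}(u-,x)\,d\Lambda(x)_u=d\tilde F_u(x)$ -- is precisely the content of the cited BR02 proposition; it is sketched rather than proved, but the plan is sound.

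The genuine gap is the uniqueness argument. Uniqueness in the proposition means: any $\bbf$-predictable, right-continuous, increasing process $\Lambda'(x)$ with $\Lambda'(x)^{\tau^{x,n}}=A^n(x)$ for \emph{all} $n\in\bbn$ is indistinguishable from $\Lambda(x)$. Doob--Meyer uniqueness of $\tilde F(x)$ plus injectivity of the map $\tilde F(x)\mapsto\Lambda(x)$ only shows that formula \eqref{lambda-form} produces one well-defined candidate; it says nothing about a competitor $\Lambda'(x)$ that is not of the form \eqref{lambda-form}. Indeed, for a single fixed $n$ the $(\bbf,\bbg)$-martingale hazard process is \emph{not} unique -- one can modify it in an $\bbf$-predictable way after $\tau^{x,n}$ without affecting the stopped process -- so uniqueness must come from the infinite family. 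The paper's argument is exactly this: by Assumption~\ref{StAss0} and Theorem~\ref{LLN}, $\lim_N\frac1N\sum_{n\leq N}N^n(x)_t=G_t(t,x)>0$ almost surely, hence $\tilde\tau^{x,n}:=\max\{\tau^{x,i}:i\leq n\}\to\infty$ almost surely; since $\Lambda'(x)$ stopped at each $\tau^{x,i}$ equals the unique compensator $A^i(x)$, the stopped processes $\Lambda'(x)^{\tilde\tau^{x,n}}$ are determined, and letting $n\to\infty$ pins down $\Lambda'(x)$ everywhere. Without an argument of this type (divergence of the maxima, which uses conditional independence and strict positivity of $G$), your proof establishes existence but not the asserted uniqueness.
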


 \begin{proof}
 See Appendix~\ref{app:comp}.
 \end{proof}

The following proposition states that under regularity conditions the joint $(\bbf, \bbg)$-martingale hazard process coincides with the $\bbf$-conditional hazard process.
\begin{proposition}\label{prop:GL}
Let $-x\in\bbr_+$.
If the $\bbf$-conditional hazard process $(\Gamma_t(t,x))_{t\geq -x }$ is continuous, then $\Gamma_t(t,x) = \Lambda(x)_t$ almost surely for $t\geq -x$.
\end{proposition}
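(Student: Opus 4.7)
The plan is to deduce Proposition~\ref{prop:GL} directly from the explicit formula for $\Lambda(x)$ given in Proposition~\ref{MHP} together with the chain rule, using only that $G_t(t,x)$ is continuous of finite variation under the given hypothesis.

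First, I would note that by hypothesis the process $t\mapsto \Gamma_t(t,x) = -\ln G_t(t,x)$ is continuous, and hence so is $t\mapsto G_t(t,x) = e^{-\Gamma_t(t,x)}$. In particular $G_{\cdot}(\cdot,x)$ has no jumps, so $G_{t-}(t-,x) = G_t(t,x)$ almost surely. By the remark following Theorem~\ref{LLN}, $(G_t(t,x))_{t\geq -x \vee 0}$ is $\bbp$--almost surely decreasing, so it is a continuous process of finite variation, and therefore $\Gamma_{\cdot}(\cdot,x)$ is continuous, increasing, $\bbf$-adapted (hence $\bbf$-predictable) and of finite variation.

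With $G_t(t,x)$ continuous, the continuous-case formula at the end of Proposition~\ref{MHP} applies and gives
\begin{equation*}
\Lambda(x)_t = -\int_{(-x,t]} \frac{1}{G_s(s,x)}\, dG_s(s,x), \qquad t \geq -x.
\end{equation*}
Since $G_{\cdot}(\cdot,x) = \exp(-\Gamma_{\cdot}(\cdot,x))$ and both processes are continuous of finite variation, the ordinary (Stieltjes) chain rule yields
\begin{equation*}
dG_s(s,x) = -G_s(s,x)\, d\Gamma_s(s,x).
\end{equation*}
Substituting into the formula above cancels the factor $G_s(s,x)$ and leaves
\begin{equation*}
\Lambda(x)_t = \int_{(-x,t]} d\Gamma_s(s,x) = \Gamma_t(t,x) - \Gamma_{-x}(-x,x).
\end{equation*}
Finally, $\Gamma_{-x}(-x,x) = -\ln G_{-x}(-x,x) = -\ln \bbp(\tau^x > -x \mid \fil_{-x}) = -\ln 1 = 0$ by the convention recorded in Definition~\ref{def-rates}, so $\Lambda(x)_t = \Gamma_t(t,x)$ for all $t\geq -x$ almost surely.

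I do not expect a serious obstacle: the argument is essentially just the deterministic identity $-d\ln G = -dG/G$ for continuous finite-variation processes, combined with the explicit formula already supplied in Proposition~\ref{MHP}. The only point requiring some care is justifying the finite-variation property of $G_{\cdot}(\cdot,x)$ (which follows from its monotonicity, itself a consequence of Theorem~\ref{LLN}), and matching the initial value $\Gamma_{-x}(-x,x) = 0$ with the convention $\Lambda(x)_{-x} = 0$ built into the definition of the compensator.
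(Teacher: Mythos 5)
Your proof is correct, but it takes a different route from the paper. The paper's own argument first invokes Lemma~\ref{CIComp} to show that $\Lambda(x)$ is an $(\bbf,\bbg^n)$-martingale hazard process of $\tau^{x,n}$ for the single-name filtration $\bbg^n = \bbf\vee\sigma(N^n(x)_s:s\le t)$, and then quotes Proposition~6.2.1(ii) of Bielecki--Rutkowski to conclude $\Lambda(x)_t=-\ln(1-\tilde F_t(x))=-\ln G_t(t,x)=\Gamma_t(t,x)$; in other words, it outsources the identification of the martingale hazard process with the hazard process (in the continuous case) to the cited monograph. You instead start from the continuous-case formula at the end of Proposition~\ref{MHP},
\begin{equation*}
\Lambda(x)_t=-\int_{(-x,t]}\frac{1}{G_s(s,x)}\,dG_s(s,x),
\end{equation*}
and carry out the pathwise Lebesgue--Stieltjes chain rule $dG=-G\,d\Gamma$ for the continuous, monotone (hence finite-variation) processes $G_\cdot(\cdot,x)=e^{-\Gamma_\cdot(\cdot,x)}$, matching the initial value via $\Gamma_{-x}(-x,x)=0$; positivity of $G$ (Assumption~\ref{StAss0}) makes the integrand well defined, and monotonicity is indeed available either from the remark after Theorem~\ref{LLN} or simply because $\tilde F_t(x)=1-G_t(t,x)$ is increasing by construction. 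In effect you reprove the relevant special case of the Bielecki--Rutkowski proposition directly, which makes your argument more elementary and self-contained (it needs only the final display of Proposition~\ref{MHP}), whereas the paper's version is shorter given the external reference and stays within the martingale-hazard-process framework, at the cost of the extra reduction step through Lemma~\ref{CIComp}.
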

 \begin{proof}
 See Appendix~\ref{app:comp}.
 \end{proof}

\section{Infinite-Dimensional Formulation}\label{IDF}

In this section, we provide a model  for the stochastic evolution of the $\bbf$-forward mortality rates $\mu$ and the $\bbf$-forward mortality improvements $j$ on the time interval $\bbr_+ = [0,\infty)$. We will define these quantities for each fixed $(T,x)$ as an It\^{o} process driven by a (possibly infinite dimensional) Wiener process and a compensated Poisson random measure. In a second step, we shall transform these systems of It\^{o} processes to obtain a single infinite-dimensional stochastic process having values in an adequate function space. In the framework of bond markets, this idea originates from \ci{Musiela}. As we pointed out in Remark \ref{rem-martingale}, the $\bbf$-survival processes $G(T,x)$ must necessarily be martingales. This property leads to consistency conditions which we will describe for all considered quantities.

We shall now present the general stochastic framework. We begin with the driving Wiener process $W$ taking values in some separable Hilbert space $U$ with covariance operator $Q\in L(U)$. For details we refer to Chapter 4 in \ci{Da_Prato}. The standard space of stochastic integrands with respect to $W$ consists of stochastic processes with values in the space of Hilbert-Schmidt
operators $L_2^0(H) := L_2(U_0,H)$ from $U_0 := Q^{1/2}(U)$ into some separable Hilbert space $H$. The space $H$ is the state-space of the model; adequate choices will be discussed later. The (possibly infinite-dimensional) integrands  can be decomposed into one-dimensional components on the basis of the spectral decomposition of $Q$. To be precise, we denote by $(\lambda_k)_{k \in \mathbb{N}} \subset (0,\infty)$  the sequence of non-zero eigenvalues of $Q$ and by $(e_k)_{k \in \mathbb{N}}$ the corresponding orthonormal basis of eigenvectors. Then the one-dimensional components of a $L_2^0(H)$-valued integrand $\Phi$ are given by 
\begin{align}\label{isom-isom}
\Phi^k := \Phi ( \sqrt{\lambda_k} e_k ), \quad  k \in \mathbb{N}.
\end{align}

As a second stochastic driver of the evolution we introduce a  time-homogeneous Poisson random measure  $\mathfrak{p}$ that allows to include jumps. For details we refer to \ci[Def. II.1.20]{Jacod-Shiryaev}. The mark space  $(E,\mathcal{E})$ of the Poisson random measure $\mathfrak{p}$ is a measurable space. For technical reasons we assume that $(E,\mathcal{E})$  is a Blackwell
space (see \ci{Dellacherie} or \ci{Getoor}). Blackwell spaces  include Polish spaces as a special case.  The compensator of $\mathfrak{p}$ is of the form $dt \otimes \nu(d\xi)$
for  a $\sigma$-finite measure $\nu$ on $(E,\mathcal{E})$. For further reference,  we set $\lcal_{\nu}^2(H) := \lcal^2(E,\ecal,\nu;H)$.

We present our main results in the following Sections~\ref{sec-HJM-rates}--\ref{sec-Musiela-impr}. For convenience of the reader, technical assumptions and proofs are deferred to Appendix~\ref{sec-proofs}. An illustrative example is provided in Section~\ref{sec-example}.

\subsection{Consistent HJM type dynamics of the forward mortality rates}\label{sec-HJM-rates}

In this section, we will specify the dynamics of the $\bbf$-forward mortality rates $\mu_t(T,x)$. We begin by choosing a suitable parameter domain $\Theta$ for the values of $(t,T,x)$. The variable $t$ represents the running time, the parameter $-x$ signifies the date of birth of an individual, and $T$ denotes a future date. Natural parameter restrictions are thus provided by  $0 \leq t \leq T$ , $T \geq -x$. The relevant domain $\Theta \subset \bbr_+^2 \times \bbr$ is hence given by
\begin{align}\label{def-domain-Theta}
\Theta := \{ (t,T,x) \in \bbr_+^2 \times \bbr : (T,x) \in \Xi \text{ and } t \in [0,T] \},
\end{align}
with
\begin{align}\label{def-domain-Xi}
\Xi := \{ (T,x) \in \bbr_+ \times \bbr : -x \leq T \}.
\end{align}
Next, we specify the stochastic dynamics of the forward mortality rates. Suppose that $\mu_0 : \Xi \rightarrow \mathbb{R}$ is an initial surface of $\bbf$-forward mortality rates. We suppose that the $\bbf$-forward mortality rates $\mu(T,x)$, $(T,x) \in \Xi$, follow an It\^{o} process:
\begin{equation}\label{mortality-forces}
\begin{aligned}
\mu_t(T,x) &:= \mu_0(T,x) + \int_{0}^t \alpha_s(T,x) ds + \int_{0}^t \sigma_s(T,x) dW_s
\\ &\quad + \int_{0}^t \int_E \delta_s(T,x,\xi) (\mathfrak{p}(ds,d\xi) - \nu(d\xi)ds), \quad t \in [0,T].
\end{aligned}
\end{equation}
Here, $\alpha : \Omega \times \Theta \rightarrow \mathbb{R}$, $\sigma : \Omega \times \Theta \rightarrow L_2^0(\mathbb{R})$ and $\delta : \Omega \times \Theta \times E \rightarrow \bbr$ are stochastic processes  that satisfy the technical Assumption \ref{ass-alpha} (see below) that guarantees  that all stochastic integrals in equation \eqref{mortality-forces} exist.  

For fixed $x \in \bbr$ we introduce the $\bbf$-spot mortality rates $\gamma(x)$ as
\begin{align}\label{def-gamma}
\gamma_t(x) := \mu_t(t,x) \mathbbm{1}_{\{ t \geq -x \}}, \quad t \geq 0.
\end{align}
The spot mortality rates follow an It\^{o} process as characterized in the following proposition.
\begin{proposition}
We suppose that:
\begin{itemize}
\item For all $(t,x) \in \Xi$ and $\xi \in E$ the mappings $T \mapsto \mu_0(T,x)$, $T \mapsto \alpha_t(T,x)$, $T \mapsto \sigma_t(T,x)$ and $T \mapsto \delta_t(T,x,\xi)$ are differentiable on their domains.
\item Assumption \ref{ass-alpha} holds as stated as well as for the derivatives $\partial_T \mu_0$, $\partial_T \alpha$, $\partial_T \sigma$ and $\partial_T \gamma$ in place of  $ \mu_0$, $ \alpha$, $ \sigma$ and $ \gamma$.
\end{itemize} 
Then, for each $x \in \mathbb{R}_+$ the process  $\gamma(x)$ of ~$\bbf$-spot mortality rates is an It\^{o} process of the form
\begin{align*}
\gamma_t(x) &= \gamma_0(x) + \int_0^t \zeta_u(x) du + \int_0^t \sigma_u(u,x) dW_u 
\\ &\quad + \int_{0}^t \int_E \delta_u(u,x,\xi) (\mathfrak{p}(ds,d\xi) - \nu(d\xi)ds), \quad t \geq 0,
\end{align*}
where the process $\zeta : \Omega \times \Xi \rightarrow \bbr$ is given by
\begin{align*}
\zeta_u(x) &= \alpha_u(u,x) + \partial_u \mu_0(u,x) + \int_0^u \partial_u \alpha_s(u,x)ds + \int_0^u \partial_u \sigma_s(u,x)dW_s
\\ &\quad + \int_{0}^u \int_E \partial_u \delta_s(u,x,\xi) (\mathfrak{p}(ds,d\xi) - \nu(d\xi)ds), \quad u \geq 0.
\end{align*}
\end{proposition}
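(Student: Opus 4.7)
The plan is to apply the fundamental theorem of calculus in the maturity variable together with stochastic Fubini in order to rewrite $\mu_t(t,x)$ as an It\^o process in $t$. Since we restrict to $x\in\bbr_+$, the indicator in \eqref{def-gamma} is always $1$ and the constraint $T\geq -x$ is automatic, so we can work with $\gamma_t(x)=\mu_t(t,x)$ on $t\geq 0$ throughout.

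First I would use differentiability in $T$ to write, for fixed $s\leq t$,
\begin{align*}
\alpha_s(t,x) &= \alpha_s(s,x) + \int_s^t \partial_u\alpha_s(u,x)\,du,\\
\sigma_s(t,x) &= \sigma_s(s,x) + \int_s^t \partial_u\sigma_s(u,x)\,du,\\
\delta_s(t,x,\xi) &= \delta_s(s,x,\xi) + \int_s^t \partial_u\delta_s(u,x,\xi)\,du,
\end{align*}
and analogously $\mu_0(t,x)=\mu_0(0,x)+\int_0^t\partial_u\mu_0(u,x)\,du=\gamma_0(x)+\int_0^t\partial_u\mu_0(u,x)\,du$. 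Substituting these into \eqref{mortality-forces} evaluated at $T=t$ yields six terms, three of which are already of the desired form $\int_0^t \alpha_u(u,x)\,du$, $\int_0^t \sigma_u(u,x)\,dW_u$, $\int_0^t\int_E \delta_u(u,x,\xi)\,\tilde{\mathfrak{p}}(du,d\xi)$ (after interchanging the dummy $s$ with $u$), while the other three are iterated integrals of the derivatives that need to be rearranged.

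The main step is then to apply a stochastic Fubini theorem to interchange the order of integration in each of the three iterated integrals, turning $\int_0^t \int_s^t \partial_u(\cdot)_s(u,x)\,du\,\mathrm{d}I_s$ into $\int_0^t\bigl(\int_0^u \partial_u(\cdot)_s(u,x)\,\mathrm{d}I_s\bigr)du$, where $\mathrm{d}I_s$ stands successively for $ds$, $dW_s$ and the compensated Poisson differential. For the Lebesgue integral this is the classical Fubini theorem, whereas for the Wiener and compensated Poisson integrals we need the stochastic Fubini theorems, e.g.\ as formulated in Da Prato--Zabczyk and in the Jacod--Shiryaev framework; their integrability hypotheses are precisely what the extension of Assumption~\ref{ass-alpha} to the derivatives $\partial_T \mu_0$, $\partial_T\alpha$, $\partial_T\sigma$, $\partial_T\delta$ is designed to supply. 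Summing the three rearranged iterated integrals with the term $\int_0^t\partial_u\mu_0(u,x)\,du$ and the pathwise $\int_0^t\alpha_u(u,x)\,du$ produces exactly the drift $\int_0^t\zeta_u(x)\,du$ defined in the statement.

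The main obstacle is thus a careful verification of the stochastic Fubini hypotheses for the infinite dimensional Wiener integral and the compensated Poisson random measure, in particular measurability and the relevant $L^2$-bounds on $\partial_u\sigma_s(u,x)$ viewed as an $L_2^0(\bbr)$-valued process and on $\partial_u\delta_s(u,x,\cdot)$ in $\lcal^2_\nu(\bbr)$, uniformly over the triangle $\{0\leq s\leq u\leq t\}$. Once these are in place, collecting terms and setting $t=0$ for the constant part (which gives $\gamma_0(x)=\mu_0(0,x)$) yields the claimed It\^o decomposition of $\gamma(x)$.
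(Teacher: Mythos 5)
Your argument is correct and is exactly the approach the paper has in mind: the paper omits the proof, referring to the analogous result for short rates in HJM models (Prop.~6.1 in Filipovi\'c's book), whose proof is precisely this combination of the fundamental theorem of calculus in the maturity variable with the classical and stochastic Fubini theorems over the triangle $\{0\leq s\leq u\leq t\}$, with the extended Assumption~\ref{ass-alpha} supplying the required integrability bounds.
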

\begin{proof}
The proof is analogous to that of \ci[Prop. 6.1]{fillbook}, and therefore omitted.
\end{proof}

As pointed out in Remark~\ref{rem-martingale}, the forward survival process satisfies a martingale condition which is key to the following analysis. The condition implies a crucial  consistency condition for the dynamics:
\begin{definition}\label{def-consistent-G}
The $\bbf$-forward mortality rates $\mu$ are called \emph{consistent}, if for all $(T,x) \in \Xi$ the $\bbf$-survival process $G(T,x)$  is a $\bbf$-martingale.
\end{definition}

The following theorem provides a precise criterion for the consistency  of the mortality rates. Technical assumptions are again deferred to  Appendix~\ref{sec-proofs}. In particular, we require an exponential integrability condition on the L\'{e}vy measure $\nu$ which is stated in Assumption \ref{ass-Levy-measure}.

\begin{theorem}\label{thm-drift-HJM-mu}
Suppose that Assumptions \ref{ass-alpha} and \ref{ass-Levy-measure} are satisfied. Then the $\bbf$-forward mortality rates $\mu$ are consistent if and only if
\begin{equation}\label{drift-cond}
\begin{aligned}
\alpha_t(T,x) &= \sum_{k \in \mathbb{N}} \sigma_t^k(T,x) \int_{-x \vee t}^T \sigma_t^k(u,x) du 
\\ &\quad - \int_E \delta_t(T,x,\xi) \bigg[ \exp \bigg( - \int_{-x \vee t}^T \delta_t(u,x,\xi)du \bigg) - 1 \bigg] \nu(d\xi)
\\ &\quad \text{for all $(T,x) \in \Xi$ with $T \geq t$}, \quad \text{$d \mathbb{P} \otimes dt$--almost surely on $\Omega \times \bbr_+$.}
\end{aligned}
\end{equation}
\end{theorem}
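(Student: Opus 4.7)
The plan is to transfer the martingale requirement on the surface $(T,x)\mapsto G(T,x)$ into a pointwise drift condition on $\mu$, via the sequence \emph{forward rates $\rightarrow$ integrated hazard $\rightarrow$ survival process}. Fix $(T,x)\in\Xi$ and consider the case $t\geq -x\vee 0$ first (the boundary case will be handled separately at the end). Writing
\[
\Gamma_t(T,x) \;=\; \int_{0}^{t}\mu_{s}(s,x)\,ds \;+\; \int_{t}^{T}\mu_{t}(u,x)\,du,
\]
the first summand is absolutely continuous in $t$ with derivative $\mu_t(t,x)$, while the second summand is differentiated with a Leibniz-type rule that produces a cancelling $-\mu_t(t,x)\,dt$ boundary term plus the stochastic Fubini expansion
\[
d\!\int_{t}^{T}\mu_t(u,x)\,du \;=\; -\mu_t(t,x)\,dt + \int_{t}^{T}\!d\mu_t(u,x)\,du.
\]
Applying the stochastic Fubini theorem to swap the $du$-integration with the $dW$- and $(\mathfrak{p}-\nu)$-integrals yields, after cancellation,
\[
d\Gamma_t(T,x) \;=\; A^{*}_{t}\,dt + \Sigma^{*}_{t}\,dW_t + \int_{E} D^{*}_{t}(\xi)\,(\mathfrak{p}(dt,d\xi)-\nu(d\xi)dt),
\]
with $A^{*}_{t}:=\int_{-x\vee t}^{T}\!\alpha_t(u,x)du$, $\Sigma^{*}_{t}:=\int_{-x\vee t}^{T}\!\sigma_t(u,x)du$, and $D^{*}_{t}(\xi):=\int_{-x\vee t}^{T}\!\delta_t(u,x,\xi)du$.

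Next, I apply It\^{o}'s formula to $G_t(T,x)=\exp(-\Gamma_t(T,x))$. The continuous Gaussian part contributes $-\Sigma^{*}_{t}\,dW_t$ to the martingale part and $\tfrac{1}{2}\|\Sigma^{*}_{t}\|^{2}_{L^{0}_{2}(\bbr)}\,dt$ to the drift, while the jump part contributes $\int_{E}\bigl(e^{-D^{*}_{t}(\xi)}-1\bigr)(\mathfrak{p}-\nu)$ as a compensated martingale and the local drift correction $\int_{E}\bigl[e^{-D^{*}_{t}(\xi)}-1+D^{*}_{t}(\xi)\bigr]\nu(d\xi)\,dt$. Consequently,
\[
\frac{dG_t(T,x)}{G_{t-}(T,x)} \;=\; b_t(T,x)\,dt \;+\;\text{local martingale},
\]
where
\[
b_t(T,x) \;=\; -A^{*}_{t} + \tfrac{1}{2}\|\Sigma^{*}_{t}\|^{2}_{L^{0}_{2}(\bbr)} + \int_{E}\bigl[e^{-D^{*}_{t}(\xi)}-1+D^{*}_{t}(\xi)\bigr]\nu(d\xi).
\]
By the exponential integrability Assumption~\ref{ass-Levy-measure} together with Assumption~\ref{ass-alpha}, the stochastic part is a true martingale, so $G(T,x)$ is a $\bbf$-martingale if and only if $b_\cdot(T,x)=0$ identically $d\bbp\otimes dt$-a.s., which is the \emph{integrated} drift condition
\begin{equation}\label{plan-integrated}
A^{*}_{t} \;=\; \tfrac{1}{2}\|\Sigma^{*}_{t}\|^{2}_{L^{0}_{2}(\bbr)} + \int_{E}\bigl[e^{-D^{*}_{t}(\xi)}-1+D^{*}_{t}(\xi)\bigr]\nu(d\xi).
\end{equation}

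To conclude, I differentiate \eqref{plan-integrated} with respect to $T$. The left-hand side gives $\alpha_t(T,x)$; the quadratic term yields $\sum_{k}\sigma^{k}_{t}(T,x)\int_{-x\vee t}^{T}\sigma^{k}_{t}(u,x)du$ using the one-dimensional decomposition \eqref{isom-isom}; and the jump term yields $-\int_{E}\delta_t(T,x,\xi)\bigl[e^{-D^{*}_{t}(\xi)}-1\bigr]\nu(d\xi)$, producing exactly \eqref{drift-cond}. Conversely, \eqref{drift-cond} integrates in $T$ back to \eqref{plan-integrated}, so the two formulations are equivalent under the regularity Assumption~\ref{ass-alpha}. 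The boundary case $t<-x$ is handled by observing that $\Gamma_t(T,x)=\int_{-x}^{T}\mu_t(u,x)du$ (no $\Gamma_t(t,x)$ contribution) and the lower limit is then deterministic in $t$, so the stochastic Fubini argument simplifies and gives the same expression with $-x\vee t=-x$.

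The main technical obstacle is the rigorous justification of stochastic Fubini for the compensated Poisson integral and the dominated differentiation under the integral with respect to $T$; both require the integrability and regularity stated in Assumption~\ref{ass-alpha}, and the passage from a local martingale to a true martingale in the jump part is exactly the role of the exponential moment condition in Assumption~\ref{ass-Levy-measure}. Once these three ingredients are in place, the identification of \eqref{drift-cond} with the vanishing of $b_t(T,x)$ is a direct computation.
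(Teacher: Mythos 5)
Your proposal is correct and follows essentially the same route as the paper: the exponential representation of $G(T,x)$ through the spot and forward rates, the HJM-type It\^{o}/stochastic-Fubini computation of the dynamics of $G$, the equivalence of consistency with the vanishing of the integrated drift, and the differentiation in $T$ (with the lower limit $-x\vee t$ pinning the constant for the converse). The only differences are in how two technical steps are justified: the paper bypasses your claim that the stochastic integrals are true martingales by observing $|G(T,x)|\leq 1$, so martingality is equivalent to local martingality (Remark~\ref{rem-loc-martingale}, with the jump term rearranged via Lemma~\ref{lemma-A-loc} and the splitting into the $\Delta$-part and the $e^{\Delta}-1-\Delta$-part), and it makes explicit the continuity-in-$(T,x)$ argument needed to upgrade the integrated drift condition from ``a.e.\ for each fixed $(T,x)$'' to ``a.e.\ for all $(T,x)$ simultaneously'' before differentiating in $T$ --- two standard steps you should spell out, but which do not change the argument.
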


\begin{proof}
See Appendix \ref{sec-proof-1}.
\end{proof}

\begin{remark}
Theorem \ref{thm-drift-HJM-mu} provides a criterion for the consistency of the $\bbf$-forward mortality rates $\mu$ showing that the drift $\alpha$ is completely determined by the volatilities $\sigma$ and $\gamma$. This resembles the HJM drift condition for bond markets, cf. \ci{HJM} for the Wiener driven case, and \ci{BKR} for the general situation with an additional Poisson random measure. Note that our mortality model is specified with respect to  the reference measure $\bbp$ while the HJM drift condition for interest rate models holds only with respect to  an equivalent martingale measure. In the current paper, $\bbp$ may also play the role of the statistical measure.
\end{remark}

\subsection{Consistent HJM type dynamics of the forward mortality improvements}\label{sec-improvements-HJM}

In this section we specify the dynamics of the forward mortality improvements and derive consistency conditions.  Suppose that $j_0 : \Xi \rightarrow \mathbb{R}$ is an initial surface of $\bbf$-forward mortality improvements.  We assume that the $\bbf$-forward mortality improvements $j(T,x)$, $(T,x) \in \Xi$, follow an It\^{o} process:
\begin{equation}\label{j-dynamics}
\begin{aligned}
j_t(T,x) &:= j_0(T,x) + \int_{0}^t a_s(T,x) ds + \int_{0}^t b_s(T,x) dW_s
\\ &\quad + \int_{0}^t \int_E c_s(T,x,\xi) (\mathfrak{p}(ds,d\xi) - \nu(d\xi)ds), \quad t \in [0,T].
\end{aligned}
\end{equation}
Here,  $a : \Omega \times \Theta \rightarrow \mathbb{R}$, $b : \Omega \times \Theta \rightarrow L_2^0(\mathbb{R})$, and $c : \Omega \times \Theta \times E \rightarrow \bbr$ are stochastic processes satisfying the technical Assumption \ref{ass-a} (see below). Assumption \ref{ass-a} ensures that the stochastic integrals in Definition \eqref{j-dynamics} exist. 

Letting $\gamma_0 : \bbr_+ \rightarrow \bbr$ be an initial curve of $\bbf$-spot mortality rates, we extend this curve to an initial surface $\mu_0 : \Xi \rightarrow \bbr$ of $\bbf$-forward mortality rates by setting
\begin{align}\label{def-mu-0-extend}
\mu_0(T,x) := \gamma_0(T+x) - \int_0^T j_0(u,T+x-u) du.
\end{align}
Now, using the dynamics of the forward mortality improvements as the starting point, we redefine the stochastic processes $\alpha : \Omega \times \Theta \rightarrow \mathbb{R}$, $\sigma : \Omega \times \Theta \rightarrow L_2^0(\mathbb{R})$ and $\delta : \Omega \times \Theta \times E \rightarrow \bbr$ as
\begin{equation}\label{def-alpha-a}
\begin{aligned}
&\alpha_t(T,x) := - \int_t^T a_t(u,T+x-u)du, \quad \sigma_t(T,x) := - \int_t^T b_t(u,T+x-u)du \quad \text{and}
\\ &\delta_t(T,x,\xi) := - \int_t^T c_t(u,T+x-u,\xi)du.
\end{aligned}
\end{equation}
For each $(T,x) \in \Xi$ we redefine the $\bbf$-forward mortality rates $\mu(T,x)$ by (\ref{mortality-forces}). Again, we impose an exponential integrability condition on the L\'{e}vy measure $\nu$ formalized by Assumption \ref{ass-Levy-measure-c}.

\begin{theorem}\label{thm-drift-j-1}
Suppose that Assumption \ref{ass-a} is satisfied. Then the following statements are true:
\begin{enumerate}
\item For all $x \in \bbr_+$ the $\bbf$-spot mortality rates $\gamma(x)$ follow the dynamics
\begin{equation}\label{spot-shifted-simple}
\begin{aligned}
\gamma_t(x) &= \mu_0(t,x) + \int_0^t \alpha_s(t,x) ds + \int_0^t \sigma_s(t,x) dW_s
\\ &\quad + \int_0^t \int_E \delta_s(t,x,\xi) (\mathfrak{p}(ds,d\xi) - \nu(d\xi)ds), \quad t \geq -x \vee 0.
\end{aligned}
\end{equation}

\item For all $(T,x) \in \Xi$ we have
\begin{align}\label{identity-mu-gamma-j}
\mu_t(T,x) = \gamma_t(T+x-t) - \int_t^T j_t(u,T+x-u)du, \quad t \in [0,T].
\end{align}

\item If, in addition, Assumption \ref{ass-Levy-measure-c} is satisfied, and the drift $a$ is given by
\begin{equation}\label{a-defined}
\begin{aligned}
a_t(T,x) &= -\sum_{k \in \bbn} \bigg( \int_t^T b_t^k(u,T+x-u)du \bigg) \bigg( \int_{-x \vee t}^T b_t^k(u,x) du \bigg)
\\ &\quad - \sum_{k \in \bbn} b_t^k(T,x) \int_{-x \vee t}^T \int_t^u b_t^k(v, u+x-v) dv du
\\ &\quad - \int_E \bigg( \int_t^T c_t(u,T+x-u,\xi) du  \bigg) \bigg( \int_{-x \vee t}^T c_t(u,x,\xi)du \bigg)
\\ &\qquad\qquad \times \exp \bigg( \int_{-x \vee t}^T \int_t^u c_t(v,u+x-v,\xi) dv du \bigg) \nu(d\xi)
\\ &\quad - \int_E c_t(T,x,\xi) \bigg[  \exp \bigg( \int_{-x \vee t}^T \int_t^u c_t(v,u+x-v,\xi) dv du \bigg) - 1 \bigg] \nu(d\xi),                                                                                              
\end{aligned}
\end{equation}
then the $\bbf$-forward mortality rates $\mu$ are consistent. 
\end{enumerate}
\end{theorem}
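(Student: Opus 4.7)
The plan is to reduce part (iii) to Theorem~\ref{thm-drift-HJM-mu} by first establishing the identifications in (i) and (ii), and then showing that the drift specification (\ref{a-defined}) for $a$ is exactly what is needed for $\alpha$, defined via (\ref{def-alpha-a}), to satisfy the HJM drift condition (\ref{drift-cond}).

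For part (i), I would simply evaluate (\ref{mortality-forces}) at $T = t$. Since the definition $\gamma_t(x) = \mu_t(t,x)\mathbbm{1}_{\{t \geq -x\}}$ and the extension (\ref{def-mu-0-extend}) of the initial curve are consistent, substituting $T = t$ yields (\ref{spot-shifted-simple}) term by term. A small technical point is to check that Assumption~\ref{ass-alpha} (applied to $\alpha, \sigma, \delta$ from (\ref{def-alpha-a})) follows from Assumption~\ref{ass-a} on $a, b, c$, so that all stochastic integrals are well-defined on the diagonal $T = t$.

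For part (ii), I would verify the identity (\ref{identity-mu-gamma-j}) by direct computation. Starting from (\ref{mortality-forces}) and inserting (\ref{def-alpha-a}), the drift contribution is
\begin{equation*}
\int_0^t \alpha_s(T,x)\, ds = -\int_0^t \int_s^T a_s(u, T+x-u)\, du\, ds.
\end{equation*}
Splitting the inner integral as $\int_s^T = \int_s^t + \int_t^T$ and applying (stochastic) Fubini, the $\int_s^t$ piece can be rewritten as $\int_0^t \int_0^u a_s(u, T+x-u)\, ds\, du$, which is exactly the drift part of $\int_0^t [j_u(u, T+x-u) - j_0(u, T+x-u)]\, du$ when coupled with the analogous manipulations for $\sigma$ and $\delta$. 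Collecting all the pieces and using $\mu_0(T,x) = \gamma_0(T+x) - \int_0^T j_0(u, T+x-u)\, du$ together with (\ref{spot-shifted-simple}) rearranges to (\ref{identity-mu-gamma-j}). Alternatively, one can define $\tilde\mu_t(T,x) := \gamma_t(T+x-t) - \int_t^T j_t(u,T+x-u)\, du$, check $\tilde\mu_0 = \mu_0$, and verify by an It\^{o} computation (with a chain rule for the moving argument $T+x-t$ and the moving lower limit) that $d\tilde\mu_t(T,x)$ has the same drift, diffusion, and jump coefficients as (\ref{mortality-forces}).

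For part (iii), by (i), (ii), and the fact that $\mu$ satisfies (\ref{mortality-forces}) with coefficients (\ref{def-alpha-a}), Theorem~\ref{thm-drift-HJM-mu} reduces consistency to verifying (\ref{drift-cond}). The key observation is that $\delta_t(u,x,\xi) = -\int_t^u c_t(v, u+x-v, \xi)\, dv$ implies
\begin{equation*}
\exp\bigg(-\int_{-x \vee t}^T \delta_t(u,x,\xi)\, du\bigg) = \exp\bigg(\int_{-x \vee t}^T \int_t^u c_t(v, u+x-v, \xi)\, dv\, du\bigg),
\end{equation*}
which matches the exponential in the jump part of (\ref{a-defined}). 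Computing $\alpha_t(T,x) = -\int_t^T a_t(u, T+x-u)\, du$ by substituting (\ref{a-defined}) and integrating each of the four terms against $u \in [t,T]$, the first two Wiener-type terms combine (after applying Fubini and recognizing $\sigma_t^k(T,x) = -\int_t^T b_t^k(v,T+x-v)\, dv$ and $\int_{-x\vee t}^T \sigma_t^k(u,x)\, du = -\int_{-x\vee t}^T \int_t^u b_t^k(v,u+x-v)\, dv\, du$) into $\sum_k \sigma_t^k(T,x) \int_{-x\vee t}^T \sigma_t^k(u,x)\, du$; the two jump terms collapse analogously into the jump contribution of (\ref{drift-cond}).

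The main obstacle is the bookkeeping in (iii): keeping track of the cohort-shift $T+x-u$ inside multiple nested integrals, choosing the correct Fubini swaps so that the two separate Wiener-type terms in (\ref{a-defined}) recombine into a single product $\sigma_t^k(T,x) \int \sigma_t^k(u,x)\, du$, and aligning the exponential in the jump contribution across the two parametrizations. Once those bookkeeping identities are in place, the conclusion follows immediately from Theorem~\ref{thm-drift-HJM-mu}.
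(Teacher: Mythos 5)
Your proposal is correct and, for parts (i) and (ii), is exactly the paper's argument: evaluate \eqref{mortality-forces} on the diagonal $T=t$ (after noting, as you do, that Assumption \ref{ass-a} implies Assumption \ref{ass-alpha} for the coefficients \eqref{def-alpha-a}), and then verify \eqref{identity-mu-gamma-j} term by term using \eqref{def-mu-0-extend} together with the classical and stochastic Fubini theorems, which is legitimate by \eqref{cond-a-Fubini}. For part (iii) you take the integrated form of what the paper does in differentiated form: the paper introduces the candidate drift $\beta_t(T,x) = -\langle \sigma_t(T,x),\Sigma_t(T,x)\rangle_{L_2^0(\bbr)} - \int_E \delta_t(T,x,\xi)\big(e^{\Delta_t(T,x,\xi)}-1\big)\nu(d\xi)$, shows that $-(\partial_T-\partial_x)\beta_t = a_t$ when $a$ is given by \eqref{a-defined}, and integrates along the characteristic $u \mapsto (u,T+x-u)$ to get $\alpha_t(T,x)=\beta_t(T,x)$, i.e.\ the drift condition \eqref{drift-cond} of Theorem \ref{thm-drift-HJM-mu}; you instead substitute \eqref{a-defined} into $\alpha_t(T,x)=-\int_t^T a_t(u,T+x-u)\,du$ and recombine the four terms. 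The two routes are equivalent, but be aware that what you call ``Fubini swaps'' is really the recognition of the integrand as an exact derivative in the direction $\partial_T-\partial_x$: the substantive identities are $(\partial_T-\partial_x)\Sigma_t(T,x)=\int_{-x\vee t}^T b_t(u,x)\,du$ and its analogue for $\Delta$ (the paper's \eqref{Sigma-directional}), whose derivation requires a case distinction between $-x\geq t$ and $t\geq -x$ because of the moving lower limit $-x\vee t$, together with $\sigma_t(t,x)=0$ and $\delta_t(t,x,\xi)=0$ from \eqref{zero-at-t-x} to dispose of the boundary contributions at $u=t$; the dominated-convergence justifications rest on Lemma \ref{lemma-A-loc-next} under Assumption \ref{ass-Levy-measure-c}. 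Once those identities are in place, your integration by parts along the characteristic produces exactly the two terms of \eqref{drift-cond}, and the conclusion follows from Theorem \ref{thm-drift-HJM-mu} precisely as you say.
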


\begin{proof}
See Appendix \ref{sec-proof-2}.
\end{proof}

\begin{remark}\label{rem-procedure}
The previous results entail the following relations between the initial surfaces and the volatilities of consistent dynamic evolutions of forward mortality rates and improvements:
\begin{itemize}
\item For a given initial surface $j_0$ of $\bbf$-forward mortality improvements and an initial curve $\gamma_0$ of $\bbf$-spot mortality rates, we can compute the initial surface $\mu_0$ of $\bbf$-forward mortality rates by (\ref{def-mu-0-extend}).

\item Conversely, for a given initial surface $\mu_0$ of $\bbf$-forward mortality rates, we can compute the initial surface $j_0$ of $\bbf$-forward mortality improvements as
\begin{align*}
j_0(T,x) := -(\partial_T - \partial_x) \mu_0(T,x).
\end{align*}

\item For given volatilities $a,b,c$ in (\ref{j-dynamics}) we can compute the volatilities $\alpha,\sigma,\delta$ in (\ref{mortality-forces}) by (\ref{def-alpha-a}).

\item For given volatilities $\alpha,\sigma,\delta$ in (\ref{mortality-forces}) with $\alpha_t(t,x) = \sigma_t(t,x) = \delta_t(t,x,\xi) = 0$ we can compute the volatilities $a,b,c$ in (\ref{j-dynamics}) by
\begin{align*}
&a_t(T,x) := -(\partial_T - \partial_x) \alpha_t(T,x), \quad b_t(T,x) := -(\partial_T - \partial_x) \sigma_t(T,x) \quad \text{and}
\\ &c_t(T,x,\xi) := -(\partial_T - \partial_x) \delta_t(T,x,\xi).
\end{align*}

\item Note that, for consistency, the drift term $a$ is given by (\ref{a-defined}), and the drift term $\alpha$ is given by (\ref{drift-cond}).
\end{itemize}
\end{remark}

\subsection{Consistent Musiela type dynamics of the forward mortality rates}\label{sec-Musiela-mu}

In this section, we shall transform the parameter domain $\Theta$ in order to get a unified framework. The $\bbf$-forward mortality rates will be described by one infinite-dimensional stochastic process with values in a Hilbert space $H$ consisting of functions $h : \Xi \rightarrow \bbr$. 

\noindent For this procedure, we introduce the mapping
\begin{align*}
\phi : \Theta \rightarrow \mathbb{R}_+ \times \Xi, \quad \phi(t,T,x) = (t,T-t,x+t),
\end{align*}
which is bijective with inverse
\begin{align*}
\phi^{-1} : \mathbb{R}_+ \times \Xi \rightarrow \Theta, \quad \phi^{-1}(t,s,y) = (t,s+t,y-t).
\end{align*}

\begin{definition}\label{def-space}
We call a separable Hilbert space $(H, \| \cdot \|)$ a \emph{forward mortality space}, if the following conditions are satisfied:
\begin{enumerate}
\item $H$ consists of continuous functions $h : \Xi \rightarrow \bbr$.

\item For each $(s,y) \in \Xi$ the point evaluation $\ell_{(s,y)} : H \rightarrow \bbr$, $\ell_{(s,y)}(h) := h(s,y)$ is a continuous linear functional.

\item For each bounded Borel set $B \subset \Xi$ there is a constant $C > 0$ such that
\begin{align}\label{point-eval-uniform}
\| \ell_{(s,y)} \| \leq C \quad \text{for all $(s,y) \in B$.}
\end{align}
\item The shift semigroup $(S_t)_{t \geq 0}$ given by 
\begin{align}\label{def-semigroup}
S_t h(s,y) := h(s+t,y-t), \quad (s,y) \in \Xi
\end{align}
is a $C_0$-semigroup on $H$.
\end{enumerate}
\end{definition}

The domain $\Xi$ of functions in a forward mortality space is shown in Figure \ref{figure-domain}. The variable $s$ represents the length of the time horizon for which survival probabilities are computed;  $y$ denotes the current age of individuals of a given cohort. The sum $s+y$ is the age of individuals of cohort $y$  at the end of the time horizon $s$. Note that the current age $y$ is allowed to be negative, labeling individuals of future generations for which forecasts are made. For each $(s,y) \in \Xi$ we necessarily have $s+y \geq 0$ by the choice of the domain $\Xi$; i.e. at the end of the prediction time horizon, individuals for which predictions are made will already have been born.

Figure \ref{figure-domain} also illustrates the action of the shift semigroup, where the vector $(s,y) \in \Xi$ is mapped to $(s+t,y-t) \in \Xi$. For positive $t$ this mapping shifts quantities to younger generations with the same age.

\begin{figure}
\begin{center}
  \includegraphics[height=60mm,width=40mm,clip]{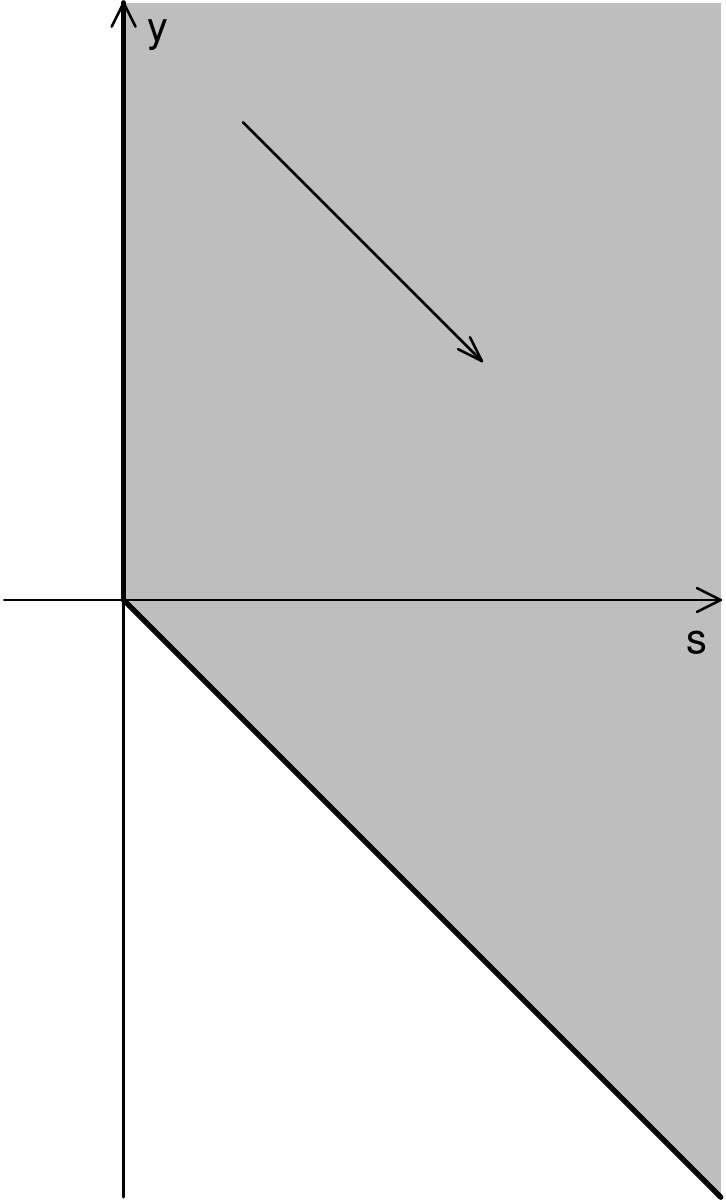}
\caption{The domain $\Xi$ and the action of the shift semigroup $(S_t)_{t \geq 0}$.}
\label{figure-domain}
\end{center}
\end{figure}

An example of a forward mortality space can be constructed as follows:

\begin{example}\label{ex-space}
Let $w_1,w_2 : \bbr_+ \rightarrow (0,\infty)$ and $w_3 : \bbr_+^2 \rightarrow (0,\infty)$ be strictly positive, continuous weight functions. A possible specific example is provided by the following parametric choice: 
\begin{equation}\label{weight-functions}
\begin{aligned}
w_1(s) &= e^{-\beta s}, \quad w_2(z) = e^{-\beta z} \quad \text{and} 
\\ w_3(s,z) &= e^{-\beta(s+z)} \quad \text{for some constant $\beta > 0$.}
\end{aligned}
\end{equation}
Let $H$ be the linear space consisting of all functions $h : \Xi \rightarrow \bbr$ satisfying the following conditions:
\begin{itemize}
\item For all $s \in \bbr_+$ the mappings $z \mapsto h(s,z-s)$, $z \mapsto \partial_s h(s,z-s)$ are absolutely continuous, and for all $z \in \bbr$ the mappings $s \mapsto h(s,z-s)$, $s \mapsto \partial_z h(s,z-s)$ are absolutely continuous (and hence, almost everywhere differentiable).

\item We have $\partial_{sz}h(s,z-s) = \partial_{zs}h(s,z-s)$ for almost all $(s,z) \in \bbr_+^2$.

\item We have
\begin{equation}\label{def-norm-H}
\begin{aligned}
\| h \| &:= \bigg( |h(0,0)|^2 + \int_0^{\infty} | \partial_s h(s,-s) |^2 w_1(s) ds + \int_0^{\infty} | \partial_z h(0,z) |^2 w_2(z) dz 
\\ &\quad\quad + \int_0^{\infty} \int_0^{\infty} | \partial_{sz} h(s,z-s) |^2 w_3(s,z) dz ds \bigg)^{1/2} < \infty.
\end{aligned}
\end{equation}
\end{itemize}
Then $(H, \| \cdot \|)$ is a forward mortality space. The arguments are similar to those from Section 5 in \ci{fillnm}: A straightforward calculation shows that for all $h \in H$ and $s_0,z_0 \in \bbr_+$ we have
\begin{equation}\label{repr-eval}
\begin{aligned}
h(s_0,z_0-s_0) &= h(0,0) + \int_0^{s_0} \partial_s h(s,-s) ds + \int_0^{z_0} \partial_z h(0,z) dz 
\\ &\quad + \int_0^{s_0} \int_0^{z_0} \partial_{sz}h(s,z-s) dz ds.
\end{aligned}
\end{equation}
The representation (\ref{repr-eval}) shows that every $h \in H$ is a continuous function. Moreover, by (\ref{repr-eval}) and the Cauchy-Schwarz inequality, we obtain (\ref{point-eval-uniform}). Using the estimate (\ref{point-eval-uniform}), for each $h \in H$ with $\| h \| = 0$ we have
$h=0$, showing that $\| \cdot \|$ is a norm (not just a seminorm) on $H$. By Definition (\ref{def-norm-H}) of the norm $\| \cdot \|$ we have
\begin{align*}
H \cong \bbr \times L^2(\bbr_+) \times L^2(\bbr_+) \times L^2(\bbr_+^2),
\end{align*}
showing that $(H,\| \cdot \|)$ is a separable Hilbert space. Finally, similar arguments as in \ci[Section 5]{fillnm} show that $(S_t)_{t \geq 0}$ is a $C_0$-semigroup on $H$.
\end{example}

From now on, let $H$ be a forward mortality space.
Denoting by $A$ the infinitesimal generator of the shift semigroup $(S_t)_{t \geq 0}$, for each $h \in \mathcal{D}(A)$ we obtain
\begin{align*}
Ah(s,y) &= \lim_{t \rightarrow 0} \frac{S_t h(s,y) - h(s,y)}{t} = \lim_{t \rightarrow 0} \frac{h((s,y) + t(1,-1)) - h(s,y)}{t}
\\ &= Dh(s,y)(1,-1) = ( \partial_s - \partial_y ) h(s,y), \quad (s,y) \in \Xi.
\end{align*}
Therefore, we have $A = \partial_s - \partial_y$ and
\begin{align*}
\mathcal{D} ( \partial_s - \partial_y ) \subset \{ h \in H:  ( \partial_s - \partial_y ) h \mbox{ exists with } ( \partial_s - \partial_y ) h \in H \}.
\end{align*}
Let $\mu_0 \in H$ be an initial surface of forward mortality rates. We define the $\bbf$-forward mortality rates $\bar{\mu}$ as the $H$-valued process
\begin{equation}\label{var-const}
\begin{aligned}
\bar{\mu}_t &:= S_t \mu_0 + \int_0^t S_{t-s} \bar{\alpha}_s ds + \int_0^t S_{t-s} \bar{\sigma}_s d W_s
\\ &\quad + \int_0^t \int_E S_{t-s} \bar{\delta}_s(\xi) ( \mathfrak{p}(ds,d\xi) - \nu(d\xi)ds ), \quad t \geq 0.
\end{aligned}
\end{equation}
Here, $\bar{\alpha} : \Omega \times \bbr_+ \rightarrow H$, $\bar{\sigma} : \Omega \times \bbr_+ \rightarrow L_2^0(H)$ and $\bar{\delta} : \Omega \times \bbr_+ \times E \rightarrow H$ are stochastic processes satisfying Assumption \ref{ass-alpha-bar} (see below) which ensures that all stochastic integrals in the variation of constants formula (\ref{var-const}) exist.

\begin{remark}\label{rem-positivity}
As we have seen in Section \ref{DEP}, forward mortality rates should be positive processes. In the context of the current paper, we do, however, not impose a general positivity assumption on the  $\bbf$-forward mortality rates $\bar{\mu}$ defined in (\ref{var-const}) resp. on the $\bbf$-forward mortality rates $\mu$ defined in (\ref{mortality-forces}). Positivity depends on the choice of the specific model. Even if positivity is violated,  in practice,  one can ensure by an adequate choice of the parameters that the mortality rates  become negative only with low probability. Such a model may then be regarded as a good approximation of reality; this modeling philosophy conceptually parallels the approach of the Vasi\u{c}ek model and its Hull-White extension in the context of interest rates. 

In order to rigorously investigate positivity in a general framework, one would, as in \ci{Positivity} for classical HJM models, define the closed, convex cone
\begin{align*}
P := \bigcap_{(s,y) \in \Xi} \{ h \in H : h(s,y) \geq 0 \}
\end{align*}
of nonnegative mortality surfaces and derive appropriate conditions for stochastic invariance of $P$ for (\ref{var-const}). 
\end{remark}
\begin{definition}
The $\bbf$-forward mortality rates $\bar{\mu}$ are called \emph{consistent}, if the transformed $\bbf$-forward mortality rates $\mu := \bar{\mu} \circ \phi$ are consistent in the sense of Definition \ref{def-consistent-G}.
\end{definition}
In the following theorem we impose again an exponential integrability condition on the L\'{e}vy measure $\nu$.
\begin{theorem}\label{thm-drift-HJM}
Suppose that Assumptions \ref{ass-alpha-bar} and \ref{ass-Levy-measure-bar} are fulfilled. Then the $\bbf$-forward mortality rates $\bar{\mu}$ are consistent if and only if
\begin{equation}\label{drift-cond-Musiela}
\begin{aligned}
\bar{\alpha}_t(s,y) &= \sum_{k \in \mathbb{N}} \bar{\sigma}_t^k(s,y) \int_{-y \vee 0}^{s} \bar{\sigma}_t^k(u,y) du
\\ &\quad - \int_E \bar{\delta}_t(s,y,\xi) \bigg[ \exp \bigg( -\int_{-y \vee 0}^{s} \bar{\delta}_t(u,y,\xi) du \bigg) - 1 \bigg] \nu(d\xi)
\\ &\quad \text{for all $(s,y) \in \Xi$}, \quad \text{$d\bbp \otimes dt$--almost surely on $\Omega \times \bbr_+$.} 
\end{aligned}
\end{equation}
\end{theorem}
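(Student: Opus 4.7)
The plan is to reduce this statement to Theorem~\ref{thm-drift-HJM-mu} by means of the Musiela-type change of variables $\phi(t,T,x) = (t,T-t,x+t)$, i.e.\ by translating the $H$-valued mild evolution (\ref{var-const}) into the family of It\^o processes (\ref{mortality-forces}) and then applying the pointwise drift condition (\ref{drift-cond}) already established there.

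First I would show that for each $(T,x) \in \Xi$, the transformed process $\mu_t(T,x) := \bar{\mu}_t(T-t, x+t)$ satisfies an It\^o dynamics of the form (\ref{mortality-forces}). The key observation is that for any $h \in H$ and $(s,y) \in \Xi$, $S_{t-u} h(T-t, x+t) = h(T-u, x+u)$, because the shift semigroup acts as $S_r h(s,y) = h(s+r, y-r)$. Applying the continuous linear functional $\ell_{(T-t,x+t)}$ to (\ref{var-const}) and commuting it with the Bochner and stochastic integrals (which is justified because $\ell_{(s,y)}$ is bounded on $H$ by Definition~\ref{def-space}(ii) and uniformly bounded on compact sets by Definition~\ref{def-space}(iii), so the usual exchange theorems for stochastic integrals with a continuous linear operator apply), we obtain
\begin{align*}
\mu_t(T,x) &= \mu_0(T,x) + \int_0^t \bar{\alpha}_u(T-u, x+u)\, du + \int_0^t \bar{\sigma}_u(T-u, x+u)\, dW_u \\
&\quad + \int_0^t\!\int_E \bar{\delta}_u(T-u, x+u, \xi)\,(\mathfrak{p}(du,d\xi) - \nu(d\xi)du).
\end{align*}
This identifies the It\^o coefficients $\alpha_t(T,x) = \bar{\alpha}_t(T-t, x+t)$, $\sigma_t^k(T,x) = \bar{\sigma}_t^k(T-t, x+t)$ and $\delta_t(T,x,\xi) = \bar{\delta}_t(T-t, x+t, \xi)$, and Assumptions~\ref{ass-alpha-bar} and~\ref{ass-Levy-measure-bar} should be set up so that they imply Assumptions~\ref{ass-alpha} and~\ref{ass-Levy-measure} for these coefficients.

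Next, I would observe that consistency of $\bar{\mu}$ is, by definition, consistency of $\mu$, and apply Theorem~\ref{thm-drift-HJM-mu} to conclude that consistency is equivalent to the drift condition (\ref{drift-cond}). It remains to rewrite (\ref{drift-cond}) in the new variables $(s,y) = (T-t, x+t)$. Since $-x\vee t = (t-y)\vee t = t + ((-y)\vee 0)$, the substitution $u' = u - t$ transforms $\int_{-x\vee t}^T$ into $\int_{-y\vee 0}^s$, while $\sigma_t^k(u,x) = \bar{\sigma}_t^k(u-t, y) = \bar{\sigma}_t^k(u', y)$ and analogously for $\delta$. The drift condition (\ref{drift-cond}) thereby becomes exactly (\ref{drift-cond-Musiela}), and since $(T,x)$ with $T\geq t$ ranges bijectively over $\Xi$ as $(s,y)$ ranges over $\Xi$, the equivalence is complete.

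The main obstacle I anticipate is the first step, namely the rigorous interchange of the point evaluation with the stochastic integrals in (\ref{var-const}), together with checking that Assumption~\ref{ass-alpha-bar} indeed transfers into Assumption~\ref{ass-alpha} for the pointwise coefficients on $\Theta$. The argument for the Wiener integral is standard once one decomposes $\bar{\sigma}$ along the eigenbasis via (\ref{isom-isom}) and uses Hilbert-Schmidt norms together with (\ref{point-eval-uniform}); the delicate piece is the Poisson stochastic integral, where one must control $\|\bar{\delta}_s(\xi)\|$ and its evaluation uniformly, appealing again to (\ref{point-eval-uniform}) on bounded sets of $\Xi$ and to a suitable exponential moment bound on $\nu$ encoded in Assumption~\ref{ass-Levy-measure-bar}. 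Once these integrability issues are handled, the algebraic translation to (\ref{drift-cond-Musiela}) is a routine change of variable.
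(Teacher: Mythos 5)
Your proposal is correct and follows essentially the same route as the paper's own proof: pulling back $\bar{\mu}$ via $\phi$ (i.e.\ evaluating the mild solution (\ref{var-const}) at the shifted point, using the semigroup identity $S_{t-u}h(T-t,x+t)=h(T-u,x+u)$) to recover the It\^{o} dynamics (\ref{mortality-forces}) with $\alpha=\bar{\alpha}\circ\phi$, $\sigma=\bar{\sigma}\circ\phi$, $\delta=\bar{\delta}\circ\phi$, noting that Assumptions \ref{ass-alpha-bar} and \ref{ass-Levy-measure-bar} transfer to Assumptions \ref{ass-alpha} and \ref{ass-Levy-measure} via (\ref{point-eval-uniform}), and then applying Theorem \ref{thm-drift-HJM-mu} together with the change of variables $u'=u-t$, $-x\vee t=t+((-y)\vee 0)$ that identifies (\ref{drift-cond}) with (\ref{drift-cond-Musiela}). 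The integrability and functional-commutation points you flag are exactly the ones the paper handles through Assumption \ref{ass-alpha-bar} and the forward mortality space axioms, so no gap remains.
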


\begin{proof}
See Appendix \ref{sec-proof-3}.
\end{proof}

\begin{remark}
In the spirit of \ci{Da_Prato}, the process $\bar{\mu}$ of $\bbf$-forward mortality rates in (\ref{var-const}) is a mild solution to the stochastic partial differential equation (SPDE) 
\begin{align*}
\left\{
\begin{array}{rcl}
d \bar{\mu}_t & = & \big( (\partial_s - \partial_y) \bar{\mu}_t + \bar{\alpha}_t \big) dt + \bar{\sigma}_t dW_t + \bar{\delta}_t(\xi) ( \mathfrak{p}(dt,d\xi) - \nu(d\xi)dt ) \medskip
\\ \bar{\mu}_0 & = & \mu_0.
\end{array}
\right.
\end{align*}
Condition (\ref{drift-cond-Musiela}) is necessary and sufficient for consistency of the $\bbf$-forward mortality rates. It resembles the HJM drift condition for bond markets with Musiela parametrization.
\end{remark}

\subsection{Consistent Musiela type dynamics of the forward mortality improvements}\label{sec-Musiela-impr}

In this section, we derive a unified framework for the $\bbf$-forward mortality improvements. Stochastic processes will take values in an appropriate function space. The implied $\bbf$-forward mortality rates will be consistent under suitable conditions.

Let $H$ be a forward mortality space (see Definition \ref{def-space}) and   $j_0 \in H$  an initial surface of forward mortality improvements. We define the $\bbf$-forward mortality improvements $\bar{j}$ as the $H$-valued process
\begin{equation}\label{j-var-const}
\begin{aligned}
\bar{j}_t &:= S_t j_0 + \int_0^t S_{t-s} \bar{a}_s ds + \int_0^t S_{t-s} \bar{b}_s d W_s
\\ &\quad + \int_0^t \int_E S_{t-s} \bar{c}_s(\xi) ( \mathfrak{p}(ds,d\xi) - \nu(d\xi)ds ), \quad t \geq 0.
\end{aligned}
\end{equation}
Here,  $\bar{a} : \Omega \times \bbr_+ \rightarrow H$, $\bar{b} : \Omega \times \bbr_+ \rightarrow L_2^0(H)$ and $\bar{c} : \Omega \times \bbr_+ \times E \rightarrow H$ are stochastic processes satisfying the technical Assumption \ref{ass-a-bar} (see below). Assumption \ref{ass-a-bar}  ensures that the stochastic integrals in the variation of constants formula (\ref{j-var-const})  exist. 

Letting $\gamma_0 : \bbr_+ \rightarrow \bbr$ be an initial curve of $\bbf$-spot mortality rates, we extend this curve to an initial surface $\mu_0 : \Xi \rightarrow \bbr$ of $\bbf$-forward mortality rates by setting
\begin{align}\label{def-mu-0-extend-Musiela}
\mu_0(s,y) := \gamma_0(s+y) - \int_0^s j_0(u,s+y-u) du.
\end{align}
Furthermore, we redefine the stochastic processes $\bar{\alpha} : \Omega \times \Xi \rightarrow \mathbb{R}$, $\bar{\sigma} : \Omega \times \Xi \rightarrow L_2^0(\mathbb{R})$ and $\bar{\delta} : \Omega \times \Xi \times E \rightarrow \bbr$ as
\begin{equation}\label{def-alpha-a-bar}
\begin{aligned}
&\bar{\alpha}_t(s,y) := - \int_0^s \bar{a}_t(u,s+y-u)du, \quad
\bar{\sigma}_t(s,y) := - \int_0^s \bar{b}_t(u,s+y-u)du \quad \text{and}
\\ &\bar{\delta}_t(s,y,\xi) := - \int_0^s \bar{c}_t(u,s+y-u,\xi)du.
\end{aligned}
\end{equation}
Suppose that $\mu_0 \in H$, and that $\bar{\alpha}$, $\bar{\sigma}$ and $\bar{\delta}$ are $H$-valued processes such that Assumption \ref{ass-alpha-bar} is fulfilled. The $H$-valued process $\bar{\mu}$ is redefined by the variation of constants formula (\ref{var-const}), and for $y \geq 0$ we define the $\bbf$-spot mortality rates $\bar{\gamma}(y)$ as
\begin{align}\label{def-gamma-bar}
\bar{\gamma}_t(y) := \bar{\mu}_t(0,y), \quad t \geq 0.
\end{align}
Again, we impose an exponential integrability condition on the L\'{e}vy measure $\nu$.

\begin{theorem}\label{thm-main-Musiela}
Suppose that Assumption \ref{ass-a-bar} is satisfied. Then the following statements are true:
\begin{enumerate}
\item For all $y \in \bbr_+$ the $\bbf$-spot mortality rates $\bar{\gamma}(y)$ have the dynamics
\begin{equation}\label{spot-shifted-bar}
\begin{aligned}
\bar{\gamma}_t(y) &= S_t \mu_0(0,y) + \int_0^t S_{t-s} \bar{\alpha}_s(0,y) ds + \int_0^t S_{t-s} \bar{\sigma}_s(0,y) dW_s
\\ &\quad + \int_0^t \int_E S_{t-s} \bar{\delta}_s(0,y,\xi) (\mathfrak{p}(ds,d\xi) - \nu(d\xi)ds), \quad t \geq 0.
\end{aligned}
\end{equation}

\item For all $(s,y) \in \Xi$ we have
\begin{align}\label{identity-mu-gamma-j-bar}
\bar{\mu}_t(s,y) = \bar{\gamma}_t(s+y) - \int_0^s \bar{j}_t(u,s+y-u)du, \quad t \geq 0.
\end{align}

\item If, in addition, Assumption \ref{ass-Levy-measure-c-bar} is satisfied, and the drift $\bar a$ is given by
\begin{equation}\label{a-bar-defined}
\begin{aligned}
\bar{a}_t(s,y) &= -\sum_{k \in \bbn} \bigg( \int_0^s \bar{b}_t^k(u,s+y-u)du \bigg) \bigg( \int_{-y \vee 0}^s \bar{b}_t^k(u,y) du \bigg)
\\ &\quad - \sum_{k \in \bbn} \bar{b}_t^k(s,y) \int_{-y \vee 0}^s \int_0^u \bar{b}_t^k(v, u+y-v) dv du
\\ &\quad - \int_E \bigg( \int_0^s \bar{c}_t(u,s+y-u,\xi) du  \bigg) \bigg( \int_{-y \vee 0}^s \bar{c}_t(u,y,\xi)du \bigg)
\\ &\qquad\qquad \times \exp \bigg( \int_{-y \vee 0}^s \int_0^u \bar{c}_t(v,u+y-v,\xi) dv du \bigg) \nu(d\xi)
\\ &\quad - \int_E \bar{c}_t(s,y,\xi) \bigg[  \exp \bigg( \int_{-y \vee 0}^s \int_0^u \bar{c}_t(v,u+y-v,\xi) dv du \bigg) - 1 \bigg] \nu(d\xi),                                                                                              
\end{aligned}
\end{equation}
then the $\bbf$-forward mortality rates $\bar{\mu}$ are consistent. 
\end{enumerate}
\end{theorem}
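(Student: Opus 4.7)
\emph{Plan.} My approach is to reduce all three claims to Theorem \ref{thm-drift-j-1} via the Musiela change of variables $\phi(t,T,x) = (T-t, x+t)$. Concretely, I would introduce the pointwise HJM-type counterparts $\mu_t(T,x) := \bar{\mu}_t(T-t, x+t)$, $j_t(T,x) := \bar{j}_t(T-t, x+t)$, $\gamma_t(x) := \bar{\gamma}_t(x+t)$, and analogously $\alpha_t(T,x) := \bar{\alpha}_t(T-t, x+t)$, $a_t(T,x) := \bar{a}_t(T-t, x+t)$, together with the corresponding identifications for $\sigma,\delta,b,c$. Because each $\bar{a}_s \in H$ is a continuous function on $\Xi$ and the point-evaluation functionals $\ell_{(s,y)}$ are continuous linear with the uniform local bound (\ref{point-eval-uniform}), they commute with Bochner and stochastic integrals. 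Evaluating the mild formulas (\ref{var-const}) and (\ref{j-var-const}) at the point $(T-t, x+t)$ and using the shift identity $(S_{t-s} h)(T-t, x+t) = h(T-s, x+s)$ then shows that $\mu$ and $j$ satisfy the pointwise It\^o dynamics (\ref{mortality-forces}) and (\ref{j-dynamics}) with these integrands, so the hypotheses of Theorem \ref{thm-drift-j-1} are in force.

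With this preparation, part (i) is the special case of (\ref{var-const}) evaluated at $(0,y)$: applying $\ell_{(0,y)}$ to both sides and commuting with the integrals yields (\ref{spot-shifted-bar}) immediately. Part (ii) is the Musiela rewrite of identity (\ref{identity-mu-gamma-j}) from Theorem \ref{thm-drift-j-1}(ii): substituting $T = s+t$, $x = y-t$, using $T+x-t = s+y-t$ together with $\mu_t(t,\cdot) = \gamma_t(\cdot)$, and changing the inner-integral dummy via $u \mapsto u-t$ converts $\int_t^T j_t(u, T+x-u)\,du$ into $\int_0^s \bar{j}_t(u, s+y-u)\,du$, yielding (\ref{identity-mu-gamma-j-bar}); at $t=0$ the same substitution matches (\ref{def-mu-0-extend-Musiela}) and (\ref{def-mu-0-extend}). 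For part (iii), I would verify term by term that the Musiela drift (\ref{a-bar-defined}) is the image of the HJM drift (\ref{a-defined}) under the substitution $T = s+t$, $x = y-t$: the changes of variable $u \mapsto u-t$ and $v \mapsto v-t$, the identities $T+x = s+y$ and $-x \vee t = (-y \vee 0) + t$, and the pointwise relations $b_t^k(u, T+x-u) = \bar{b}_t^k(u-t, s+y-u)$, $b_t^k(u,x) = \bar{b}_t^k(u-t,y)$ (and the analogues for $c$) transform the outer limits $(t,T)$ and $(-x\vee t, T)$ into $(0,s)$ and $(-y\vee 0, s)$, and match the integrands. With the algebraic identification complete, Theorem \ref{thm-drift-j-1}(iii) delivers consistency of $\mu$, which by definition is consistency of $\bar{\mu}$.

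The main obstacle is not the calculation itself, which is essentially bookkeeping of the four substitutions, but the verification that the translated processes $a,b,c$ and $\alpha,\sigma,\delta$ inherit the integrability requirements of Assumptions \ref{ass-a} and \ref{ass-alpha} from Assumptions \ref{ass-a-bar} and \ref{ass-alpha-bar}. This uses precisely the uniform bound (\ref{point-eval-uniform}) on point evaluations over bounded Borel subsets of $\Xi$, which transfers norm estimates for $H$-valued processes to integrable bounds on their pointwise restrictions and is what guarantees that the stochastic Fubini manipulations underlying Theorem \ref{thm-drift-j-1} remain legitimate after the coordinate change. Once this transfer is secured, the theorem is a direct application of the HJM-parametrized result in rotated coordinates.
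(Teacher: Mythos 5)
Your proposal is correct and follows essentially the same route as the paper: transfer everything to the HJM parametrization via $\mu := \bar{\mu}\circ\phi$, $j := \bar{j}\circ\phi$ (with the integrands pulled back by $\phi$ as in the proof of Theorem \ref{thm-drift-HJM}), read off part (i) from the variation-of-constants formula and Definition (\ref{def-gamma-bar}), obtain part (ii) from identity (\ref{identity-mu-gamma-j}), and deduce part (iii) by checking that (\ref{a-bar-defined}) translates into (\ref{a-defined}) so that Theorem \ref{thm-drift-j-1}(iii) applies. Your remark that the integrability hypotheses of Assumptions \ref{ass-a} and \ref{ass-alpha} are inherited via the uniform point-evaluation bound (\ref{point-eval-uniform}) is exactly the observation the paper makes just before its proof.
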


\begin{proof}
See Appendix \ref{sec-proof-4}.
\end{proof}

\begin{remark}\label{rem-procedure-Musiela}
In the spirit of \ci{Da_Prato}, the process $\bar{j}$ of $\bbf$-forward mortality improvements in (\ref{j-var-const}) is a mild solution to the stochastic partial differential equation (SPDE) 
\begin{align*}
\left\{
\begin{array}{rcl}
d \bar{j}_t & = & \big( (\partial_s - \partial_y) \bar{j}_t + \bar{a}_t \big) dt + \bar{b}_t dW_t + \bar{c}_t(\xi) ( \mathfrak{p}(dt,d\xi) - \nu(d\xi)dt ) \medskip
\\ \bar{j}_0 & = & j_0.
\end{array}
\right.
\end{align*}
Similar to Remark \ref{rem-procedure}, we can derive the following relations between the initial surfaces and the volatilities of consistent dynamic evolutions of forward mortality rates and improvements:
\begin{itemize}
\item For a given initial surface $j_0$ of $\bbf$-forward mortality improvements and an initial curve $\gamma_0$ of $\bbf$-spot mortality rates, we can compute the initial surface $\mu_0$ of $\bbf$-forward mortality rates by (\ref{def-mu-0-extend-Musiela}).

\item Conversely, for a given initial surface $\mu_0$ of $\bbf$-forward mortality rates, we can compute the initial surface $j_0$ of $\bbf$-forward mortality improvements as
\begin{align*}
j_0(s,y) := -(\partial_s - \partial_y) \mu_0(s,y).
\end{align*}

\item For given volatilities $\bar{a},\bar{b},\bar{c}$ in (\ref{j-var-const}) we can compute the volatilities $\bar{\alpha},\bar{\sigma},\bar{\delta}$ in (\ref{var-const}) by (\ref{def-alpha-a-bar}).

\item For given volatilities $\bar{\alpha},\bar{\sigma},\bar{\delta}$ in (\ref{var-const}) with $\bar{\alpha}_t(0,y) = \bar{\sigma}_t(0,y) = \bar{\delta}_t(0,y,\xi) = 0$ we can compute the volatilities $\bar{a},\bar{b},\bar{c}$ in (\ref{j-var-const}) by
\begin{align*}
&\bar{a}_t(s,y) := -(\partial_s - \partial_y) \bar{\alpha}_t(s,y), \quad \bar{b}_t(s,y) := -(\partial_s - \partial_y) \bar{\sigma}_t(s,y) \quad \text{and}
\\ &\bar{c}_t(s,y,\xi) := -(\partial_s - \partial_y) \bar{\delta}_t(s,y,\xi).
\end{align*}

\item Note that, for consistency, the drift term $\bar{a}$ is given by (\ref{a-bar-defined}), and the drift term $\bar{\alpha}$ is given by (\ref{drift-cond-Musiela}).
\end{itemize}
\end{remark}

\section{Example: A L\'{e}vy process driven Gompertz-Makeham model}\label{sec-example}

In order to illustrate our previous results, we present a L\'{e}vy process driven version of the Gompertz-Makeham model, and compute consistent dynamics of this model. In Section \ref{sec-Levy-1} we consider the general situation, where the $\bbf$-forward mortality rates and the $\bbf$-forward mortality improvements are driven by a L\'{e}vy process; in Section \ref{sec-Levy-2} we focus on the particular case of the Gompertz-Makeham model. 

\subsection{L\'{e}vy process driven mortality models}\label{sec-Levy-1}

Let $X$ be a real-valued L\'{e}vy process with Gaussian part $C \geq 0$ and L\'{e}vy measure $\nu$. We assume that there exist constants $M,\epsilon > 0$ such that
\begin{align}\label{cond-cumulant}
\int_{\{ |\xi| > 1 \}} e^{z\xi} \nu(d\xi) < \infty \quad \text{for all $z \in [-(1+\epsilon)M,(1+\epsilon)M]$.}
\end{align}
Then the cumulant generating function $\Psi(z) := \ln \bbe [e^{z X_1}]$ exists on $[-(1+\epsilon)M, (1+\epsilon)M]$ and is of class $C^{\infty}$ on the interior $(-(1+\epsilon)M, (1+\epsilon)M)$ with representations
\begin{align*}
\Psi(z) &= Bz + \frac{C}{2}z^2 + \int_{\bbr} ( e^{z \xi} - 1 - z \xi ) \nu(d\xi),
\\ \Psi'(z) &= B + Cz + \int_{\bbr} \xi ( e^{z \xi} - 1 ) \nu(d\xi),
\\ \Psi''(z) &= C + \int_{\bbr} \xi^2 e^{z \xi} \nu(d\xi),
\end{align*}
where $B \in \bbr$ denotes the drift of $X$.
We shall directly switch to Musiela type dynamics. Let $H$ be a forward mortality space, see Definition \ref{def-space}. Suppose that the $\bbf$-forward mortality rates $\bar{\mu}$ and the $\bbf$-forward mortality improvements $\bar{j}$ are given by the variation of constants formulas
\begin{align}\label{Levy-mu}
\bar{\mu}_t &= S_t \mu_0 + \int_0^t S_{t-s} \hat{\alpha}_s ds + \int_0^t S_{t-s} \hat{\sigma}_s dX_s, \quad t \geq 0,
\\ \label{Levy-j} \bar{j}_t &= S_t j_0 + \int_0^t S_{t-s} \hat{a}_s ds + \int_0^t S_{t-s} \hat{b}_s dX_s, \quad t \geq 0,
\end{align}
with initial surfaces $\mu_0, j_0 \in H$ and   appropriate $H$-valued processes $\hat{\alpha}, \hat{\sigma}$ and $\hat{a}, \hat{b}$. Note that these are particular cases of the variation of constant formulas (\ref{var-const}), (\ref{j-var-const}):  the state space $U$ of the Wiener process $W$ and the mark space $E$ of the Poisson random measure $\mathfrak{p}$ are $U = E = \bbr$; the mapping $\bar{\sigma}$ in (\ref{var-const}) is provided by $\sqrt{C} \hat{\sigma}$, and $\bar{\delta}(\xi)$ in (\ref{var-const}) is given by $\xi \hat{\sigma}$; the mapping $\bar{b}$ in (\ref{j-var-const}) by $\sqrt{C} \hat{b}$, and $\bar{c}(\xi)$ in (\ref{j-var-const}) by $\xi \hat{b}$.

In this case, the drift condition (\ref{drift-cond-Musiela}) in Theorem \ref{thm-drift-HJM} for the $\bbf$-forward mortality rates ensuring consistency becomes
\begin{align}\label{drift-mu-Levy}
\hat{\alpha}_t(s,y) = - \hat{\sigma}_t(s,y) \Psi' \bigg( -\int_{-y \vee 0}^s \hat{\sigma}_t(u,y)du \bigg),
\end{align}
and the drift condition (\ref{a-bar-defined}) in Theorem \ref{thm-main-Musiela} for the $\bbf$-forward mortality improvements translates to
\begin{equation}\label{drift-j-Levy}
\begin{aligned}
\hat{a}_t(s,y) &= -\bigg( \int_0^s \hat{b}_t(u,s+y-u)du \bigg) \bigg( \int_{-y \vee 0}^s \hat{b}_t(u,y)du \bigg) 
\\ &\quad \cdot \Psi'' \bigg( \int_{-y \vee 0}^s \int_0^u \hat{b}_t(v,u+y-v) dv du \bigg)
\\ &\quad - \hat{b}_t(s,y) \Psi' \bigg( \int_{-y \vee 0}^s \int_0^u \hat{b}_t(v,u+y-v) dv du \bigg).
\end{aligned}
\end{equation}

\subsection{The Gompertz-Makeham model}\label{sec-Levy-2}

As an illustrating example of a stochastic mortality model, we describe in the current section a L\'{e}vy process driven Gompertz-Makeham model. Letting $\theta_1 > 1$ and $\theta_2, \theta_3, \theta_4, \theta_5 > 0$ be real  constants, the initial surface $j_0 : \Xi \rightarrow \bbr$ of $\bbf$-forward mortality improvements, the initial surface $\mu_0 : \Xi \rightarrow \bbr$ of $\bbf$-forward mortality rates and the initial curve $\gamma_0 : \bbr_+ \rightarrow \bbr$ of $\bbf$-spot mortality are provided by
\begin{align*}
j_0(s,y) &= \theta_2 e^{-\theta_2 s} ( \theta_3 e^{\theta_4(s+y)} + \theta_5),
\\ \mu_0(s,y) &= (\theta_1 + e^{-\theta_2 s}) (\theta_3 e^{\theta_4(s+y)} + \theta_5),
\\ \gamma_0(y) &= (\theta_1 + 1) (\theta_3 e^{\theta_4 y} + \theta_5).
\end{align*}
These initial surfaces satisfy the relationships described in Remark \ref{rem-procedure-Musiela}. For every $z \in \bbr_+$ we observe that
\begin{align*}
\lim_{s \rightarrow \infty} \mu_0(s,z-s) = \theta_1 (\theta_3 e^{\theta_4 z} + \theta_5),
\end{align*}
i.e., if the length $s$ of the prediction time horizon tends to $\infty$ for predictions about the mortality of individuals at age $z$ at the end of the time horizon, then the initial forward mortality rates are again described by a classical Gompertz-Makeham model.

\begin{remark}
The initial surfaces $j_0$ and $\mu_0$ belong to the forward mortality space $H$ defined in Example \ref{ex-space}, with appropriate choices of weight functions (\ref{weight-functions}).
\end{remark}

Finally, we describe three examples of volatility structures $\hat{a},\hat{b}$ and $\hat{\alpha},\hat{\sigma}$, which are computed according to Remark \ref{rem-procedure-Musiela} and drift conditions (\ref{drift-mu-Levy}), (\ref{drift-j-Levy}).

\begin{example}\label{ex-b-1}
If the volatility $\hat{b}$ of the $\bbf$-forward mortality improvements is constant and equal to $1$, then we compute
\begin{align*}
\hat{a}(s,y) &= -s(s + y \mathbbm{1}_{\{ y < 0 \}})  \Psi'' \bigg( \frac{s^2}{2} - \frac{y^2}{2} \mathbbm{1}_{\{ y < 0 \}} \bigg) -  \Psi' \bigg( \frac{s^2}{2} - \frac{y^2}{2} \mathbbm{1}_{\{ y < 0 \}} \bigg),
\\ \hat{b}(s,y) &= 1,
\\ \hat{\alpha}(s,y) &= s \Psi' \bigg( \frac{s^2}{2} - \frac{y^2}{2} \mathbbm{1}_{\{ y < 0 \}} \bigg),
\\ \hat{\sigma}(s,y) &= -s.
\end{align*}
In particular, the volatility of the forward mortality rates is proportional to the length of the prediction time horizon.
\end{example}

\begin{example}\label{ex-b-2}
In this example, the volatility $\hat{b}$ of the $\bbf$-forward mortality improvements equals the age $s+y$ of individuals at the end of the time period. In this case, we obtain
\begin{align*}
\hat{a}(s,y) &= -s(s+y) \bigg( \frac{s^2}{2} + sy + \frac{y^2}{2} \mathbbm{1}_{\{ y < 0 \}} \bigg) \Psi'' \bigg( \frac{3s^2y + 2s^3}{6} - \frac{y^3}{6} \mathbbm{1}_{\{ y < 0 \}} \bigg)
\\ &\quad -(s+y) \Psi' \bigg( \frac{3s^2y + 2s^3}{6} - \frac{y^3}{6} \mathbbm{1}_{\{ y < 0 \}} \bigg),
\\ \hat{b}(s,y) &= s+y,
\\ \hat{\alpha}(s,y) &= s(s+y) \Psi' \bigg( \frac{3s^2y + 2s^3}{6} - \frac{y^3}{6} \mathbbm{1}_{\{ y < 0 \}} \bigg),
\\ \hat{\sigma}(s,y) &= -s(s+y).
\end{align*}
In particular, the volatility of the forward mortality rates is proportional to the length of the prediction time horizon times the age of the individuals at the end of the time horizon.
\end{example}

\begin{example}\label{ex-b-3}
Finally, the volatility $\hat{b}$ of the $\bbf$-forward mortality improvements is assumed to equal the age $s+y$ at the end of the time horizon multiplied by $1 - e^{-s}$, a factor which models increasing uncertainty for longer prediction horizons. Introducing the auxiliary functions $g,h : \Xi \rightarrow \bbr$ as
\begin{align*}
g(s,y) &= \frac{(2 s e^s + 2) y + s^2 e^s + 2s + 2}{2} e^{-s} - (y+1)\mathbbm{1}_{\{ y < 0 \}} - \frac{2 e^y - y^2}{2} \mathbbm{1}_{\{ y \geq 0 \}},
\\ h(s,y) &= \frac{((3s^2 - 6s) e^s - 6)y + (2s^3 - 3s^2)e^s - 6s - 6}{6} e^{-s} + (y+1) \mathbbm{1}_{\{ y \geq 0 \}}
\\ &\quad + \frac{6 e^y - y^3 - 3y^2}{6} \mathbbm{1}_{\{ y < 0 \}},
\end{align*}
we obtain the drift and volatility terms
\begin{align*}
\hat{a}(s,y) &= -(s+y)( 1 + s - e^{-s}) g(s,y) \Psi''(h(s,y)) - (s+y)(1 - e^{-s}) \Psi'(h(s,y)),
\\ \hat{b}(s,y) &= (s+y)(1 - e^{-s}),
\\ \hat{\alpha}(s,y) &= (s+y)( 1 + s - e^{-s}) \Psi' (h(s,y)),
\\ \hat{\sigma}(s,y) &= -(s+y)( e^{-s} + s - 1).
\end{align*}
\end{example}

The $\bbf$-forward mortality rates $\mu$ described in Examples \ref{ex-b-1}--\ref{ex-b-3} may generally become negative with non zero probability; they should, thus, be interpreted as approximations of real mortality rates, see Remark \ref{rem-positivity}.

\begin{remark}
For the volatilities $\hat{b}$ from Examples \ref{ex-b-1}--\ref{ex-b-3} the double integrals appearing in (\ref{drift-j-Levy}), which are evaluated by $\Psi'$ and $\Psi''$, take values in $\bbr_+$. Therefore, in the context of these examples one should assume -- besides condition (\ref{cond-cumulant}) -- that the cumulant generating function $\Psi$ exists even on $\bbr_+$. This hypothesis is, for example, satisfied if the L\'{e}vy process is a jump diffusion $X = W + N$ that can be described as the sum of a Wiener process $W$ and a Poisson process $N$. In this particular case, the cumulant generating function equals
\begin{align*}
\Psi(z) = \frac{z^2}{2} + e^z - 1.
\end{align*}
For appropriate choices of weight functions $w_1$, $w_2$ and $w_3$, the drift and volatility terms in Examples \ref{ex-b-1}--\ref{ex-b-3} thus belong to the forward mortality space $H$ defined in Example \ref{ex-space}.
\end{remark}

\subsection*{Acknowledgements}
We acknowledge very useful comments and suggestions by the editor, the associate editor and two anonymous referees. We thank Jan Baldeaux, Anna-Maria Hamm, Eckhard Platen, Cord-Roland Rinke, Thomas Salfeld, Max Stollmann and Sven Wiesinger for helpful remarks and discussions.

\normalsize

\begin{appendix}

\appendix\normalsize

\section{Appendix to Section~\ref{sec:lln}: A Conditional Law of Large Numbers}\label{app:lln}

In this appendix, we provide the proof of Theorem \ref{LLN}.

\begin{proof}[Proof of Theorem~\ref{LLN}]
The random variables $N^n(x)_t$, $n\in\bbn$, are conditionally independent given $\fil_\infty$ with identical $\fil_\infty$-conditional Bernoulli-distributions by Assumption~\ref{StAss} and Definition~\ref{CondI}. By a conditional strong law of large numbers, see e.g. Theorem 7 in \ci{rao}, we have
$$\frac 1 N \sum_{n=1}^N N^n(x)_t \;\longrightarrow \;\bbe[N^1(x)_t \,|\, \fil_\infty] \;= \;\bbp(\tau^{x,1} >t \,|\, \fil_t) \;=\; G_t(t,x)\quad\quad \bbp\mbox{--almost surely.}$$
By a conditional version of Lebesgue's dominated convergence theorem, see e.g. Theorem 23.8 in \ci{japr}, we obtain
\begin{eqnarray*}&& \lim_{N\to\infty} \frac 1 N \sum_{n=1}^N \bbe[N^n(x)_T\,|\,\gcal_t]  =   \bbe\left[ \lim_{N\to\infty} \frac 1 N \sum_{n=1}^N N^n(x)_T \,\bigg|\,\gcal_t\right]  = \bbe[G_T(T,x)\,|\,\gcal_t] \\ && \stackrel{(*)}{=} \bbe[G_T(T,x)\,|\,\fcal_t] \; = \;G_t(T,x) .\end{eqnarray*}
Equality $(*)$ follows by Proposition 6.6 in \ci{kall}, if $\fil_T \underset{\fil_t}{\independent} \sigma\{N^n(x)_s: s\leq t, n\in \bbn\} .$ Since $\fil_T \subset \fil_{\infty}$, this follows from Lemma~\ref{CILLN}. 
\end{proof}

\begin{lemma}\label{CILLN}
Consider the setting of Theorem~\ref{LLN}. Then for $t\in\bbr_+$: $\fil_\infty \underset{\fil_t}{\independent} \sigma\{N^n(x)_s: s\leq t, n\in \bbn\} .$
\end{lemma}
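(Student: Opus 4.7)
The plan is to prove the conditional independence by first reducing to a $\pi$-system generating $\hcal_t := \sigma\{N^n(x)_s : s \leq t, n \in \bbn\}$, and then exploiting the two defining properties of $\bbf$-DSCI together to show that, on this generating class, the $\fil_\infty$-conditional probability is already $\fil_t$-measurable.

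First I would observe that the survival indicators $N^n(x)$ are right-continuous, so that $\hcal_t$ is generated by the $\pi$-system $\pcal_t$ of all finite intersections $B = \bigcap_{j=1}^k \{\tau^{x,n_j} > s_j\}$ with distinct $n_j \in \bbn$ and $s_j \in [0,t]$. By a standard monotone-class / Dynkin-system argument, it suffices to show that for every $A \in \fil_\infty$ and every $B \in \pcal_t$,
\begin{align*}
\bbp(A \cap B \mid \fil_t) = \bbp(A \mid \fil_t)\, \bbp(B \mid \fil_t) \quad \bbp\text{--a.s.}
\end{align*}

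Next, for $B = \bigcap_{j=1}^k \{\tau^{x,n_j} > s_j\} \in \pcal_t$, I use property (ii) of Definition~\ref{CondI} ($\fil_\infty$-conditional independence) to write
\begin{align*}
\bbp(B \mid \fil_\infty) = \prod_{j=1}^k \bbp(\tau^{x,n_j} > s_j \mid \fil_\infty).
\end{align*}
By property (i) (doubly stochastic), each factor satisfies $\bbp(\tau^{x,n_j} > s_j \mid \fil_\infty) = \bbp(\tau^{x,n_j} > s_j \mid \fil_{s_j})$, which is $\fil_{s_j}$-measurable and, since $s_j \leq t$, hence $\fil_t$-measurable. Denote this product by $M_B$; it is $\fil_t$-measurable. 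Taking conditional expectation with respect to $\fil_t$ and using the tower property, we obtain $\bbp(B \mid \fil_t) = \bbe[\bbp(B \mid \fil_\infty) \mid \fil_t] = \bbe[M_B \mid \fil_t] = M_B$.

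Finally, for any $C \in \fil_t$ I would compute, using $A,C \in \fil_\infty$ and the $\fil_t$-measurability of $M_B$,
\begin{align*}
\bbp(A \cap B \cap C) &= \bbe[\mathbbm{1}_A \mathbbm{1}_C \bbp(B \mid \fil_\infty)] = \bbe[\mathbbm{1}_A \mathbbm{1}_C M_B] \\
&= \bbe[M_B \mathbbm{1}_C\, \bbe[\mathbbm{1}_A \mid \fil_t]] = \bbe[\mathbbm{1}_C\, \bbp(B \mid \fil_t)\, \bbp(A \mid \fil_t)].
\end{align*}
Since $C \in \fil_t$ was arbitrary, this identifies $\bbp(A \cap B \mid \fil_t) = \bbp(A \mid \fil_t)\bbp(B \mid \fil_t)$, which extends from $\pcal_t$ to all of $\hcal_t$ by monotone class. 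The main (minor) obstacle is the routine verification that the class of $B$ for which the identity holds forms a Dynkin system; the substantive content is the two-step measurability reduction $\bbp(B\mid\fil_\infty)\mapsto \bbp(B\mid\fil_t)$ enabled by combining DSCI properties (i) and (ii).
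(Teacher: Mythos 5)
Your proposal is correct and follows essentially the same route as the paper's proof: reduce to a generating $\pi$-system of events built from the survival indicators up to time $t$, use DSCI property (ii) to factorize $\bbp(\,\cdot\mid\fil_\infty)$ and property (i) to see that each factor is $\fil_t$-measurable, and then extend the resulting factorization $\bbp(A\cap B\mid\fil_t)=\bbp(A\mid\fil_t)\bbp(B\mid\fil_t)$ from the $\pi$-system to the full $\sigma$-algebra by a monotone class (Dynkin) argument, which the paper delegates to a conditional analogue of Lemma~3.6 in \citeasnoun{kall}. The only cosmetic differences are your smaller $\pi$-system of pure survival events and your verification of the conditional-expectation identity by integrating against test sets $C\in\fil_t$ instead of the paper's nested tower-property computation.
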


\begin{proof}
If $B\in\sigma(N^n(x)_s)$ for some $s \leq t$, then $\mathbbm{1}_B = f(N^n(x)_s)$ with $f:\{0,1\} \to \{0,1\}$. Letting $A\in\fil_{\infty}$, $K\in\bbn$, $s_k\leq t$, $f_k:\{0,1\}\to\{0,1\}$, $k=1,\dots, K$, we have
 \begin{align*}
 &\bbp\left(   A \cap  \bigcap_{k=1}^K \{f_k(N^k(x)_{s_k}) = 1 \} \,\bigg|\, \fil_t   \right) = \bbe \left[  \bbe \left[  \mathbbm{1}_A \cdot \prod_{k=1}^K f_k(N^k(x)_{s_k})     \,\bigg|\, \fil_\infty \right] \, \bigg| \, \fil_t \right] \\
 &= \bbe \left[   \mathbbm{1}_A \cdot \bbe \left[  \prod_{k=1}^K f_k(N^k(x)_{s_k})     \,\bigg|\, \fil_\infty \right]  \,\bigg|\, \fil_t \right] = \bbe \left[   \mathbbm{1}_A \cdot \prod_{k=1}^K \bbe \left[   f_k(N^k(x)_{s_k})     \,|\, \fil_t \right]  \bigg| \fil_t \right]\\
 &= \bbe \left[   \mathbbm{1}_A \,|\, \fil_t \right]  \cdot \prod_{k=1}^K \bbe \left[   f_k(N^k(x)_{s_k})   \, | \, \fil_t \right],\\ 
 \end{align*}
and hence
\begin{align*}
&\bbp\left(   A \cap  \bigcap_{k=1}^K \{f_k(N^k(x)_{s_k}) = 1 \} \,\bigg|\, \fil_t   \right) = \bbp \left(   A \,|\, \fil_t \right)  \cdot \bbe \left[ \prod_{k=1}^K \bbe \left[   f_k(N^k(x)_{s_k})     \,|\, \fil_t \right] \, \bigg| \,  \fil_t \right]  \\
 &= \bbp \left(   A\,|\, \fil_t \right) \cdot \bbe \left[ \prod_{k=1}^K \bbe \left[   f_k(N^k(x)_{s_k})  \,|\, \fil_\infty \right] \, \bigg| \,  \fil_t \right]   \\
  &=  \bbp \left(   A\,|\, \fil_t \right) \cdot \bbe \left[ \bbp \left(  \bigcap_{k=1}^K \{f_k(N^k(x)_{s_k}) = 1 \}    \,\bigg|\, \fil_\infty \right) \, \bigg| \, \fil_t \right]\\ & = \bbp \left(   A\,|\, \fil_t \right) \cdot \bbp \left(  \bigcap_{k=1}^K \{f_k(N^k(x)_{s_k}) = 1 \}   \, \bigg| \,  \fil_t \right).
 \end{align*}
The class of subsets $$\ccal := \left\{ \bigcap_{k=1}^K \{f_k(N^k(x)_{s_k}) = 1  
\} : s_k\leq t,\; f_k:\{0,1\}\to\{0,1\}, \; k\in\{1,\dots, K\}, \;   K\in\bbn     \right\}$$ is a $\pi$-system (i.e. closed under finite intersections) that generates the $\sigma$-algebra $ \sigma\{N^n(x)_s: s\leq t, n\in \bbn\} $. The lemma now follows from a conditional analogue of Lemma~3.6 in \ci{kall}.
\end{proof}

\section{Appendix to Section~\ref{sec:comp}: Compensators}\label{app:comp}

In this appendix, we provide the proofs of Propositions~\ref{MHP} and \ref{prop:GL}.

\begin{proof}[Proof of Proposition~\ref{MHP}]
Define the filtration $\bbg^n= (\gcal^n_t)_{t\in \bbr_+}$ by setting $\gcal^n_t = \fil_t \vee\sigma(N^n(x)_s: s\leq t)$, $t\in\bbr_+$. The $\bbg^n$-compensator $B^n(x)$ of $\tau^{x,n}$ coincides with the $\bbg$-compensator $A^n(x)$. This can be seen as follows: the $\bbg^n$-compensator $B^n(x)$ of $\tau^{x,n}$ is a $\bbg$-predictable, right-continuous, increasing process. If $1 - N^n(x) - B^n(x)$ was a $\bbg$-martingale, then $B^n(x)$ would be equal to the $\bbg$-compensator $A^n(x)$. The martingale property follows from Lemma~\ref{CIComp}.

Formula \eqref{lambda-form} thus defines a $(\bbf, \bbg)$-martingale hazard process of $\tau^{x,n}$ by Proposition 6.1.2 in \ci{BR02}. Since formula \eqref{lambda-form} does not depend on $n$, the existence of a process $\Lambda(x)$ with the desired properties is proven. It remains to show uniqueness. Since $\lim_{N\to\infty} \frac 1 N \sum_{n=1}^N N^n(x)_t = G_t(t,x) > 0$ $\bbp$--almost surely by Theorem~\ref{LLN} and Assumption~\ref{StAss0}, we know that the increasing sequence of $\bbg$-stopping times 
$\tilde \tau^{x,n} := \max\{  \tau^{x,i}: i=1, \dots, n  \},  $ $n\in\bbn$, diverges $\bbp$--almost surely to $\infty$. Observe that $\Lambda(x)^{\tilde \tau^{x,n}} = A^i(x)$ on $\{\tilde \tau ^{x,n} = \tau^{x,i}\}$ and $\Omega =\bigcup_{i=1}^n \{ \tilde \tau ^{x,n} = \tau ^{x,i}\}$. Since the $\bbg$-compensators $A^i(x)_{i\in \bbn}$ are unique, it follows that the stopped process $\Lambda(x)^{\tilde \tau^{x,n}}$ is uniquely specified. Since $\tilde \tau^{x,n} \to \infty$ $\bbp$--almost surely for $n\to \infty$, this implies the uniqueness of $\Lambda(x)$.

If $t\mapsto G_t(t,x)$ is almost surely continuous, then it is also predictable, whence the last statement follows.
\end{proof}

For the proof of Proposition \ref{prop:GL} we prepare an auxiliary result:

\begin{lemma}\label{CIComp}
Assume that $(\tau^{x,j})_{j\in\bbn}$ is a $\bbf$-DSCI family of death times of individuals born at date $-x\in \bbr_+$. Define $\gcal_t: = \sigma\{  N^j(x)_s: s\leq t, j\in \bbn   \} \vee \fil_t$,   $\gcal^n_t = \sigma(N^n(x)_s: s\leq t) \vee \fil_t$,    $t\in\bbr_+$, $\gcal^n_\infty= \bigvee_{t\in \bbr_+} \gcal^n_t$ for some $n\in \bbn$. 

Let $Y$ be a $\gcal^n_\infty$-measurable, integrable random variable. Then for $t\in\bbr_+$:
$$  \bbe[Y\,|\,\gcal_t]  =  \bbe[Y\,|\,\gcal^n_t] \quad\quad  \bbp\mbox{--almost surely}. $$
\end{lemma}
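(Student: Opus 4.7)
The plan is to establish $\bbe[Y\mid\gcal_t]=\bbe[Y\mid\gcal^n_t]$ by two monotone class reductions, supported by DSCI(ii) and Lemma~\ref{CILLN}. Truncation reduces the statement to bounded $Y$, and since $\gcal^n_\infty = \sigma(N^n(x)_s:s\in\bbr_+) \vee \fil_\infty$, a multiplicative system / monotone class argument further reduces the claim to products $Y = UV$ with $U$ bounded and $\sigma(N^n(x)_s:s\in\bbr_+)$-measurable and $V$ bounded and $\fil_\infty$-measurable.

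For such $Y$ I would apply the tower property through the enlarged $\sigma$-algebras $\gcal_t \vee \fil_\infty$ and $\gcal^n_t \vee \fil_\infty$. The $\fil_\infty$-measurable factor $V$ pulls out in both. For the inner expectation of $U$, DSCI(ii) -- upgraded from the marginal time points appearing in its definition to the full path $\sigma$-algebras by a monotone class argument on finite-dimensional cylinder events, exactly as in the proof of Lemma~\ref{CILLN} -- asserts that $\sigma(N^n(x)_s:s\in\bbr_+)$ and $\sigma(N^j(x)_s:s\in\bbr_+,\,j\neq n)$ are $\fil_\infty$-conditionally independent. Combined with the elementary fact that enlarging the conditioning $\sigma$-algebra by information from the same side preserves conditional independence, this yields
\[
\bbe[U \mid \gcal_t \vee \fil_\infty] \;=\; \bbe[U \mid \gcal^n_t \vee \fil_\infty] \;=:\; \Phi,
\]
which is $(\sigma(N^n(x)_s:s\leq t) \vee \fil_\infty)$-measurable. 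Consequently $\bbe[UV\mid\gcal_t] = \bbe[V\Phi\mid\gcal_t]$ and $\bbe[UV\mid\gcal^n_t] = \bbe[V\Phi\mid\gcal^n_t]$, so the claim reduces to $\bbe[V\Phi\mid\gcal_t] = \bbe[V\Phi\mid\gcal^n_t]$.

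A second monotone class reduction writes $V\Phi$ as a limit of linear combinations of products $V'\Phi'$ with $V' \in \fil_\infty$ bounded and $\Phi' \in \sigma(N^n(x)_s:s\leq t) \subset \gcal^n_t$ bounded. The factor $\Phi'$, being $\gcal^n_t$- and hence $\gcal_t$-measurable, pulls out of both conditional expectations, and the remaining pieces satisfy
\[
\bbe[V' \mid \gcal_t] \;=\; \bbe[V' \mid \fil_t] \;=\; \bbe[V'\mid\gcal^n_t]
\]
by Lemma~\ref{CILLN}, the second equality following since the conditional independence provided by the lemma immediately passes to any sub-$\sigma$-algebra of $\sigma\{N^j(x)_s:s\leq t,\,j\in\bbn\}$, in particular to $\sigma(N^n(x)_s:s\leq t)$. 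Reversing the two monotone class extensions and the truncation yields the claim for all integrable $\gcal^n_\infty$-measurable $Y$.

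The main technical step is the upgrade of DSCI(ii) from the finite-dimensional marginal conditions to the $\fil_\infty$-conditional independence of the full path $\sigma$-algebras; this is routine but bookkeeping-intensive, and mirrors the $\pi$-system argument already used in the proof of Lemma~\ref{CILLN}.
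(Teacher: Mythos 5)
Your proposal is correct, and it rests on exactly the same two probabilistic inputs as the paper's proof: Lemma~\ref{CILLN} (i.e.\ $\fil_\infty \underset{\fil_t}{\independent} \sigma\{N^j(x)_s: s\le t, j\in\bbn\}$) and Definition~\ref{CondI}(ii) upgraded from single time points to the path $\sigma$-algebras (harmless, since the events $\{\tau^{x,n}>s\}$ form a $\pi$-system per individual), combined via the chain rule for conditional independence --- your ``elementary fact'' about enlarging the conditioning $\sigma$-algebra from the same side is precisely Proposition~6.8 of Kallenberg, which the paper cites three times. The difference is in the packaging. The paper stays entirely at the level of $\sigma$-algebras: it derives $\fil_\infty \underset{\fil_t\vee\hcal^n_t}{\independent}\hcal^{\neq n}_t$ and $\hcal^n_\infty \underset{\fil_\infty\vee\hcal^n_t}{\independent}\hcal^{\neq n}_t$, merges them into the single statement $\gcal^n_\infty \underset{\gcal^n_t}{\independent} \gcal_t$ (Prop.~6.8, Cor.~6.7), and then the lemma follows in one line for every integrable $Y$ from Prop.~6.6 --- no truncation or factorization of $Y$ is ever needed. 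You instead unwind this conditional-independence calculus by hand: truncation, a multiplicative-class decomposition $Y=UV$, a tower argument through the $\fil_\infty$-enlarged filtrations (which reproduces the paper's step ``(x2)''), and a second monotone-class reduction in which only Lemma~\ref{CILLN} (applied to $\hcal_t$ and to its sub-$\sigma$-algebra $\hcal^n_t$) is used, so you never need the paper's intermediate statement ``(x1)''. What each route buys: the paper's argument is shorter and transfers immediately to arbitrary integrable $Y$; yours is more elementary and self-contained at the level of random variables, at the cost of the truncation and double monotone-class bookkeeping, which you would need to write out (the class of bounded $Y$ satisfying the identity is a vector space closed under bounded monotone limits, and your product classes are closed under multiplication, so the functional monotone class theorem applies). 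Both proofs are sound.
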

\begin{proof}
Let $\hcal^n_t= \sigma(N^n(x)_s: s\leq t)$, $\hcal^{\neq n}_t= \sigma(N^j(x)_s: s\leq t, j\in \bbn, j\neq n)$, $\hcal_t= \sigma(N^j(x)_s: s\leq t, j\in\bbn)$ and $\hcal^n_\infty= \bigvee_{t\in \bbr} \hcal^n_t$. By Lemma~\ref{CILLN} we have $\fil_\infty \underset{\fil_t}{\independent} \hcal_t = \hcal^n_t \vee \hcal^{\neq n}_t.$ This implies by Proposition~6.8 in \ci{kall}:
\begin{equation}\label{x1}
\fil_\infty \underset{\fil_t\vee \hcal^n_t}{\independent} \hcal^{\neq n}_t.
\end{equation}
It follows from Definition~\ref{CondI}(ii) that $\hcal^n_t\vee \hcal^n_\infty = \hcal^n_\infty \underset{\fil_\infty}{\independent} \hcal_t^{\neq n}$, thus by Proposition~6.8 in \ci{kall}:
\begin{equation}\label{x2}
\hcal^n_\infty \underset{\fil_\infty\vee \hcal^n_t}{\independent} \hcal^{\neq n}_t.
\end{equation}
Equations \eqref{x1} and \eqref{x2} imply by Proposition~6.8 in \ci{kall} that $$\gcal^n_\infty = \fil_\infty\vee \hcal^n_\infty \underset{\fil_t\vee \hcal^n_t}{\independent} \hcal^{\neq n}_t.$$  Since $\fil_t\vee \hcal^n_t = \gcal^n_t$ and $\hcal^{\neq n}_t \vee \gcal^n_t = \gcal_t$, Corollary~6.7(i) in \ci{kall} shows that $ \gcal^n_\infty  \underset{\gcal^n_t}{\independent} \gcal_t .$ We obtain $\bbe[Y \,|\, \gcal_t] = \bbe[Y \,|\, \gcal_t \vee \gcal_t^n] =  \bbe[Y \,|\,  \gcal_t^n] ,$ where the last equality follows from Proposition~6.6 in \ci{kall}.
\end{proof}

 \begin{proof}[Proof of Proposition~\ref{prop:GL}]
  Lemma~\ref{CIComp} shows that $\Lambda(x)$ is a $(\bbf, \bbg^n)$-martingale hazard process of $\tau^{x,n}$ for any $n\in \bbn$, where $\bbg^n=(\gcal^n_t)_{t\in \bbr_+}$ is the filtration introduced in the proof of Proposition~\ref{MHP}. Hence, by Proposition~6.2.1(ii) in \ci{BR02} we obtain
 \begin{align*}
 \Lambda(x)_t = -\ln(1 - \tilde{F}_t(x)) = -\ln G_t(t,x) = \Gamma_t(t,x), \quad t \geq -x,
 \end{align*}
 proving the claim.
\end{proof}

\section{Appendix to Section~\ref{IDF}: Infinite-Dimensional Formulation}\label{sec-proofs}

In this appendix, we provide the proofs of Section \ref{IDF} as well as technical assumptions.

\subsection{Proof of Theorem \ref{thm-drift-HJM-mu}}\label{sec-proof-1}

In this appendix, we provide the proof of Theorem \ref{thm-drift-HJM-mu}. Motivated by  Definitions (\ref{def-domain-Theta}) and (\ref{def-domain-Xi}) of $\Theta$ and $\Xi$,
for each $x \in \bbr$ we define the sets $\Xi_x \subset \bbr_+$ and $\Theta_x \subset \bbr_+ \times \bbr$ as
\begin{align*}
\Xi_x := \{ T \in \bbr_+ : (T,x) \in \Xi \} \quad \text{and} \quad \Theta_x := \{ (t,T) \in \bbr_+^2 : (t,T,x) \in \Theta \}.
\end{align*}

\begin{assumption}\label{ass-alpha}
We suppose that the following conditions are satisfied:
\begin{enumerate}
\item $\mu_0$ is $\bcal(\Xi)$-measurable, $\alpha$ and $\sigma$ are $\mathcal{F}_{\infty} \otimes \mathcal{B}(\Theta)$-measurable, and $\delta$ is $\mathcal{F}_{\infty} \otimes \mathcal{B}(\Theta) \otimes \mathcal{E}$-measurable.

\item For all $(T,x) \in \Xi$ the processes $\alpha(T,x)$ and $\sigma(T,x)$ are optional, and $\delta(T,x)$ is predictable.

\item For each $x \in \bbr$ and every bounded Borel set $B \subset \Xi_x$ we have $\int_B |\mu_0(T,x)| dT < \infty$.

\item For each $x \in \bbr$ and every bounded Borel set $B \subset \Theta_x$ there are random variables $X^{\alpha} : \Omega \rightarrow \bbr$ and $X^{\sigma}, X^{\delta} \in \lcal^2(\Omega)$ such that
\begin{equation}\label{cond-alpha}
\begin{aligned}
&|\alpha_t(T,x)| \leq X^{\alpha}, \quad \| \sigma_t(T,x) \|_{L_2^0(\bbr)} \leq X^{\sigma} \quad \text{and}
\\ &\| \delta_t(T,x) \|_{\lcal_{\nu}^2(\bbr)} \leq X^{\delta} \quad \text{for all $(t,T) \in B$.}
\end{aligned}
\end{equation}

\item For all $t \in \bbr_+$ and $\xi \in E$ the mappings $(T,x) \mapsto \alpha_t(T,x)$, $(T,x) \mapsto \sigma_t(T,x)$ and $(T,x) \mapsto \delta_t(T,x,\xi)$ are continuous on their domains. 
\end{enumerate}
\end{assumption}

Condition (\ref{cond-alpha}) ensures that all subsequent stochastic integrals regarding $\alpha$, $\sigma$ and $\delta$ exist. In particular, for each $x \in \bbr$ and every bounded Borel set $B \subset \Theta_x$ we have
\begin{equation}\label{cond-alpha-Fubini}
\begin{aligned}
&\iint_B |\alpha_t(T,x)| dt dT < \infty,
\quad \bbe \bigg[ \iint_B \| \sigma_t(T,x) \|_{L_2^0(\bbr)}^2 dt dT \bigg] < \infty \quad \text{and}
\\ &\bbe \bigg[ \iint_B \| \delta_t(T,x) \|_{\lcal_{\nu}^2(\bbr)}^2 dt dT \bigg] < \infty.
\end{aligned}
\end{equation}
This ensures that we may apply the classical Fubini theorem and the stochastic Fubini theorems (Theorem 2.8 in \ci{Gawarecki-Mandrekar} and Theorem A.2 in \ci{SPDE}) later on.

\begin{remark}\label{rem-loc-martingale}
 Definition (\ref{def-G-official}) shows that for all $(T,x) \in \Xi$ we have $|G(T,x)| \leq 1$. Therefore, the $\bbf$-forward mortality rates $\mu$ are consistent if and only if for all $(T,x) \in \Xi$ the $\bbf$-survival process $G(T,x)$ is a local $\bbf$-martingale, see e.g. \ci[Prop. I.1.47]{Jacod-Shiryaev}.
\end{remark}

For the proof of Theorem \ref{thm-drift-HJM-mu}, it will be useful to extend the $\bbf$-forward mortality rates $\mu$ and $\bbf$-spot mortality rates $\gamma$ as follows. We define the process $\tilde{\mu} : \Omega \times \bbr_+^2 \times \bbr \rightarrow \bbr$ as
\begin{align*}
\tilde{\mu}_t(T,x) :=
\begin{cases}
\mu_t(T,x), & (t,T,x) \in \Theta,
\\ 0, & -x > T,
\\ \mu_T(T,x), & -x \leq T \text{ and } t > T,
\end{cases}
\end{align*}
and the process $\tilde{\gamma} : \Omega \times \bbr_+ \times \bbr \rightarrow \bbr$ as
\begin{align*}
\tilde{\gamma}_t(x) := \tilde{\mu}_t(t,x).
\end{align*}
A straightforward calculation shows that for all $(T,x) \in \Xi$ we have
\begin{align}\label{G-for-HJM}
G_t(T,x) = \exp ( -\Gamma_0(0,x) ) \exp \bigg( -\int_0^t \tilde{\gamma}_s(x) ds \bigg) \exp \bigg( - \int_t^T \tilde{\mu}_t(s,x) ds \bigg), \quad t \in [0,T].
\end{align}
We shall now determine the dynamics of the processes $\tilde{\mu}(T,x)$ and $\tilde{\gamma}(x)$ in (\ref{G-for-HJM}). For this purpose, we extend the initial surface $\mu_0$ and the processes $\alpha$, $\sigma$ and $\delta$ in (\ref{mortality-forces}) as follows.
We define the mapping
\begin{align*}
\tilde{\mu}_0 : \bbr_+ \times \mathbb{R} \rightarrow \mathbb{R}, \quad \tilde{\mu}_0(T,x) := \mu_0(T,x) \mathbbm{1}_{\Xi}(T,x),
\end{align*}
and the processes $\tilde{\alpha} : \Omega \times \bbr_+^2 \times \bbr \rightarrow \mathbb{R}$, $\tilde{\sigma} : \Omega \times \bbr_+^2 \times \bbr \rightarrow L_2^0(\mathbb{R})$ and $\tilde{\delta} : \Omega \times \bbr_+^2 \times \bbr \times E \rightarrow \mathbb{R}$ by
\begin{align*}
&\tilde{\alpha}_t(T,x) := \alpha_t(T,x) \mathbbm{1}_{\Theta}(t,T,x), \quad
\tilde{\sigma}_t(T,x) := \sigma_t(T,x) \mathbbm{1}_{\Theta}(t,T,x) \quad \text{and}
\\ &\tilde{\delta}_t(T,x,\xi) := \delta_t(T,x,\xi) \mathbbm{1}_{\Theta}(t,T,x).
\end{align*}
Then for each $(T,x) \in \Xi$ we have
\begin{equation}\label{mortality-forces-tilde}
\begin{aligned}
\tilde{\mu}_t(T,x) &= \tilde{\mu}_0(T,x) + \int_{0}^t \tilde{\alpha}_s(T,x) ds + \int_{0}^t \tilde{\sigma}_s(T,x) dW_s
\\ &\quad + \int_{0}^t \int_E \tilde{\delta}_s(T,x,\xi) (\mathfrak{p}(ds,d\xi) - \nu(d\xi)ds), \quad t \in [0,T],
\end{aligned}
\end{equation}
and for each $x \in \bbr$ we have
\begin{equation}\label{dyn-gamma-tilde}
\begin{aligned}
\tilde{\gamma}_t(x) &= \tilde{\mu}_0(t,x) + \int_{0}^t \tilde{\alpha}_s(t,x) ds + \int_{0}^t \tilde{\sigma}_s(t,x) dW_s
\\ &\quad + \int_{0}^t \int_E \tilde{\delta}_s(t,x,\xi) (\mathfrak{p}(ds,d\xi) - \nu(d\xi)ds), \quad t \geq 0.
\end{aligned}
\end{equation}
By virtue of (\ref{cond-alpha}),
we may define the processes $A : \Omega \times \Theta \rightarrow \mathbb{R}$, $\Sigma : \Omega \times \Theta \rightarrow L_2^0(\mathbb{R})$ and $\Delta : \Omega \times \Theta \times E \rightarrow \bbr$ as
\begin{align}\label{A-tilde}
A_t(T,x) &:= -\int_t^T \tilde{\alpha}_t(u,x)du = -\int_{-x \vee t}^T \alpha_t(u,x)du,
\\ \label{Sigma-tilde} \Sigma_t(T,x) &:= -\int_t^T \tilde{\sigma}_t(u,x)du = -\int_{-x \vee t}^T \sigma_t(u,x)du,
\\ \label{Delta-tilde} \Delta_t(T,x,\xi) &:= -\int_t^T \tilde{\delta}_t(u,x,\xi)du = -\int_{-x \vee t}^T \delta_t(u,x,\xi)du.
\end{align}
The integrals in (\ref{Sigma-tilde}) are Bochner integrals over the state space $L_2^0(\bbr)$. Using the notation (\ref{isom-isom}), for each $k \in \bbn$ we have
\begin{align*}
\Sigma_t^k(T,x) = -\int_t^T \tilde{\sigma}_t^k(u,x)du = -\int_{-x \vee t}^T \sigma_t^k(u,x)du.
\end{align*}

\begin{remark}\label{remark-a-capital}
By Assumption \ref{ass-alpha}, the mappings $(T,x) \mapsto A_t(T,x)$, $(T,x) \mapsto \Sigma_t(T,x)$ and $(T,x) \mapsto \Delta_t(T,x,\xi)$ are continuous, and for all $t \in \bbr_+$ and $x \in \bbr$ the mappings $T \mapsto A_t(T,x)$, $T \mapsto \Sigma_t(T,x)$ and $T \mapsto \Delta_t(T,x,\xi)$ are continuously differentiable on $[-x \vee t, \infty)$.
\end{remark}

\begin{proposition}\label{prop-G-dynamics-pre}
For all $(T,x) \in \Xi$ the $\bbf$-survival process $G(T,x)$ is an It\^{o} process with dynamics
\begin{equation}\label{G-dynamics-pre}
\begin{aligned}
&G_t(T,x) = G_0(T,x) + \int_{0}^t G_s(T,x) \Big( A_s(T,x) + \frac{1}{2} \|\Sigma_s(T,x) \|_{L_2^0(\bbr)}^2 \Big) ds
\\ &\quad + \int_{0}^t G_s(T,x) \Sigma_s(T,x) dW_s
\\ &\quad + \int_0^t \int_E G_{s-}(T,x) \Delta_s(T,x,\xi) (\mathfrak{p}(ds,d\xi) - \nu(d\xi)ds)
\\ &\quad + \int_{0}^t \int_E G_{s-}(T,x) \big( e^{\Delta_s(T,x,\xi)} - 1 - \Delta_s(T,x,\xi) \big) \mathfrak{p}(ds,d\xi), \quad t \in [0,T].
\end{aligned}
\end{equation}
\end{proposition}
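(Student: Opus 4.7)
The plan is to start from the exponential representation of the survival process given by equation~(\ref{G-for-HJM}) and determine the dynamics of the exponent, then apply It\^o's formula for semimartingales with jumps.  Setting
\begin{align*}
\Phi_t(T,x) := \ln G_t(T,x) = -\Gamma_0(0,x) - \int_0^t \tilde\gamma_s(x)\,ds - \int_t^T \tilde\mu_t(s,x)\,ds,
\end{align*}
the task reduces to identifying the semimartingale decomposition of $\Phi(T,x)$ and then exponentiating.  The first integral is absolutely continuous in $t$, so its differential is simply $-\tilde\gamma_t(x)\,dt$.  The subtle piece is the integral $\int_t^T \tilde\mu_t(s,x)\,ds$, where both the integrand and the lower limit depend on $t$.

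To handle this, I would split $\int_t^T \tilde\mu_t(s,x)\,ds = \int_0^T \tilde\mu_t(s,x)\,ds - \int_0^t \tilde\mu_t(s,x)\,ds$ and treat each piece separately.  For the first piece, plug in (\ref{mortality-forces-tilde}) and apply the (stochastic) Fubini theorem, whose applicability is ensured by the integrability conditions (\ref{cond-alpha-Fubini}) implied by Assumption~\ref{ass-alpha}; this produces an It\^o process whose drift, volatility, and jump coefficients are the full integrals $\int_0^T \tilde\alpha_t,\,\int_0^T \tilde\sigma_t,\,\int_0^T \tilde\delta_t$ over $s$.  For the second piece, rewrite
\begin{align*}
\int_0^t \tilde\mu_t(s,x)\,ds = \int_0^t \tilde\gamma_s(x)\,ds + \int_0^t \bigl(\tilde\mu_t(s,x)-\tilde\mu_s(s,x)\bigr)\,ds,
\end{align*}
insert the It\^o dynamics of $\tilde\mu_t(s,x)-\tilde\mu_s(s,x) = \int_s^t\tilde\alpha_r(s,x)\,dr + \cdots$, and again apply stochastic Fubini to swap the $ds$- and $dr$-integrals.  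Combining the two pieces, the partial integrals collapse via $-\int_0^T + \int_0^r = -\int_r^T$ into exactly the coefficients $A_r(T,x),\,\Sigma_r(T,x),\,\Delta_r(T,x,\xi)$ from (\ref{A-tilde})--(\ref{Delta-tilde}), and the two occurrences of $\int_0^t\tilde\gamma_s(x)\,ds$ cancel.  This yields
\begin{align*}
\Phi_t(T,x) = \Phi_0(T,x) + \int_0^t A_r(T,x)\,dr + \int_0^t \Sigma_r(T,x)\,dW_r + \int_0^t\!\int_E \Delta_r(T,x,\xi)\,(\mathfrak{p}(dr,d\xi)-\nu(d\xi)dr).
\end{align*}

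Finally, apply the It\^o formula for semimartingales with jumps to $G_t(T,x) = e^{\Phi_t(T,x)}$.  Using $f'=f''=\exp$, the continuous quadratic variation contributes $\tfrac12 G_{t-}\|\Sigma_t(T,x)\|_{L_2^0(\bbr)}^2\,dt$, and the jump correction $\sum_{s\le t}G_{s-}[e^{\Delta\Phi_s}-1-\Delta\Phi_s]$ rewrites as the $\mathfrak p$-integral appearing in (\ref{G-dynamics-pre}).  Reorganizing the drift absorbs the compensator in the natural way, producing exactly the four terms of (\ref{G-dynamics-pre}).

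The main obstacle is the rigorous use of stochastic Fubini for the $\int_0^t \tilde\mu_t(s,x)\,ds$ term: one must ensure that the joint measurability and $L^2$-bounds in Assumption~\ref{ass-alpha} transfer to the double integrals (over bounded sets, by Remark~\ref{remark-a-capital}), so that the theorems of \ci{Gawarecki-Mandrekar} and \ci{SPDE} apply and one is allowed to exchange $ds$ with $dW_r$ and $\tilde{\mathfrak{p}}(dr,d\xi)$.  Once this swap is justified, the remaining manipulations are routine bookkeeping, and the jump term in the It\^o formula is unambiguous because $\Sigma$, $A$ are continuous in $t$ so jumps of $\Phi$ come exclusively from the compensated Poisson integral.
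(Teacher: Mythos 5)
Your proposal is correct and follows essentially the same route as the paper: the paper's proof simply invokes the representation (\ref{G-for-HJM}) together with the dynamics (\ref{mortality-forces-tilde}), (\ref{dyn-gamma-tilde}) and argues ``as in the proof of Prop.~5.2 of \ci{BKR}'', which is precisely your computation -- classical and stochastic Fubini (justified by (\ref{cond-alpha-Fubini})) to obtain the semimartingale decomposition of the exponent with coefficients $A$, $\Sigma$, $\Delta$, followed by It\^{o}'s formula for the exponential. The only cosmetic difference is that with the zero-extended processes $\tilde{\alpha}, \tilde{\sigma}, \tilde{\delta}$ the correction term $\int_0^t (\tilde{\mu}_t(s,x)-\tilde{\mu}_s(s,x))\,ds$ you treat vanishes identically, so your bookkeeping collapses even more directly to the stated coefficients.
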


\begin{proof}
By virtue of equation (\ref{G-for-HJM}) and the dynamics (\ref{mortality-forces-tilde}), (\ref{dyn-gamma-tilde}), we may argue as in the proof of \ci[Prop. 5.2]{BKR}. For the calculations, we may use the classical Fubini theorem and the stochastic Fubini theorems (see, e.g., Theorem 2.8 in \ci{Gawarecki-Mandrekar} and Theorem A.2 in \ci{SPDE}) by virtue of (\ref{cond-alpha-Fubini}). 
\end{proof}

In addition, we require the following assumption.
For $n \in \bbn$ we denote by $\Theta_n \subset \Theta$ the compact set $\Theta_n := \{ \theta \in \Theta : \| \theta \| \leq n \}$.

\begin{assumption}\label{ass-Levy-measure}
We suppose that for each $n \in \bbn$ there exist a measurable mapping $\rho_n : E \rightarrow \bbr_+$ and a constant $\epsilon_n > 0$ such that
\begin{align}\label{Levy-measure-1}
&\int_{\{ \rho_n \leq 1 \}} \rho(\xi)^2 \nu(d\xi) + \int_{\{ \rho_n > 1 \}} e^{(1+\epsilon_n)\rho_n(\xi)} \nu(d\xi) < \infty,
\\ \label{Levy-measure-2a} &|\delta_t(T,x,\xi)| \leq \rho_n(\xi) \quad \text{for all $(t,T,x) \in \Theta_n$ and $\xi \in E$.}
\end{align}
\end{assumption}

\begin{lemma}\label{lemma-A-loc}
For each $n \in \bbn$ there exists a measurable mapping $\pi_n : E \rightarrow \bbr_+$ such that
\begin{align}\label{pi-integrable}
&\int_E \pi_n(\xi) \nu(d\xi) < \infty,
\\ \label{e-Taylor-int} &\big| e^{\Delta_t(T,x,\xi)} - 1 - \Delta_t(T,x,\xi) \big| \leq \pi_n(\xi) \quad \text{for all $(t,T,x) \in \Theta_n$ and $\xi \in E$,}
\\ \label{e-Taylor} &\big| \delta_t(T,x,\xi) \big( e^{\Delta_t(T,x,\xi)} - 1 \big) \big| \leq \pi_n(\xi) \quad \text{for all $(t,T,x) \in \Theta_n$ and $\xi \in E$.}
\end{align}
\end{lemma}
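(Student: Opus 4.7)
The plan is to construct $\pi_n$ explicitly from $\rho_n$ by combining the integral representation \eqref{Delta-tilde} of $\Delta_t(T,x,\xi)$ with elementary Taylor estimates for the exponential.

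First I would derive a uniform pointwise bound $|\Delta_t(T,x,\xi)| \leq C_n\,\rho_n(\xi)$ on $\Theta_n \times E$. Since $\Theta_n$ is compact, the length of the integration interval $[-x \vee t, T]$ in \eqref{Delta-tilde} is bounded uniformly in $(t,T,x) \in \Theta_n$ by some constant $C_n > 0$; moreover, whenever $(t,T,x) \in \Theta_n$ and $u \in [-x \vee t, T]$, the triple $(t,u,x)$ still lies in $\Theta_n$ (using, e.g., the sup norm on $\bbr^3$), so that \eqref{Levy-measure-2a} yields $|\delta_t(u,x,\xi)| \leq \rho_n(\xi)$. Integrating over $[-x \vee t, T]$ then gives the claimed bound on $\Delta$.

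Next I would apply the standard estimates $|e^z - 1 - z| \leq \tfrac{1}{2} z^2 e^{|z|}$ and $|e^z - 1| \leq |z|\, e^{|z|}$, valid for all $z \in \bbr$, at the point $z = \Delta_t(T,x,\xi)$. Together with the bound from the previous step and \eqref{Levy-measure-2a}, this produces the uniform estimate
\begin{equation*}
\max\bigl\{|e^{\Delta_t(T,x,\xi)} - 1 - \Delta_t(T,x,\xi)|,\ |\delta_t(T,x,\xi)|\,|e^{\Delta_t(T,x,\xi)} - 1|\bigr\} \leq K_n\, \rho_n(\xi)^2 e^{C_n \rho_n(\xi)}
\end{equation*}
for all $(t,T,x) \in \Theta_n$ and $\xi \in E$, where $K_n$ depends only on $n$. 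I would then set $\pi_n(\xi) := K_n\, \rho_n(\xi)^2 e^{C_n \rho_n(\xi)}$, so that \eqref{e-Taylor-int} and \eqref{e-Taylor} follow at once, and verify $\nu$-integrability by splitting $E$ along $\{\rho_n \leq 1\}$ and $\{\rho_n > 1\}$. On $\{\rho_n \leq 1\}$ the exponential factor is bounded by $e^{C_n}$ while $\rho_n^2$ is $\nu$-integrable by the first summand of \eqref{Levy-measure-1}. On $\{\rho_n > 1\}$ one has $\rho_n(\xi)^2 e^{C_n \rho_n(\xi)} \leq K'_n\, e^{(1+\epsilon_n)\rho_n(\xi)}$ for a finite constant $K'_n$, provided $\epsilon_n \geq C_n - 1$; the second summand of \eqref{Levy-measure-1} then supplies integrability.

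The only real obstacle is the matching of the exponential growth rate $C_n$ produced by the Taylor estimates with the integrability exponent $1 + \epsilon_n$ available from Assumption~\ref{ass-Levy-measure}. This is accommodated by the freedom, built into the assumption, to choose $\rho_n$ and $\epsilon_n$ separately for each $n$; the remainder of the argument consists of routine domination and splitting.
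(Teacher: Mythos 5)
Your construction mirrors the paper's proof up to the very last step: the paper also reduces \eqref{e-Taylor-int} and \eqref{e-Taylor} to the elementary Taylor bounds $|e^z-1-z|\le z^2e^{|z|}$, $|e^z-1|\le |z|\,e^{|z|}$ and then splits $E$ along $\{\rho_n\le 1\}$ and $\{\rho_n>1\}$, taking $\pi_n(\xi)=e\,\rho_n(\xi)^2\mathbbm{1}_{\{\rho_n\le 1\}}+\tfrac{2}{\epsilon_n^2}e^{(1+\epsilon_n)\rho_n(\xi)}\mathbbm{1}_{\{\rho_n>1\}}$. The genuine gap is in how you dispose of the interval-length constant $C_n$. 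On $\{\rho_n>1\}$ you need $\int\rho_n(\xi)^2e^{C_n\rho_n(\xi)}\nu(d\xi)<\infty$, and you justify this by requiring $\epsilon_n\ge C_n-1$, claiming that the assumption leaves you the freedom to arrange it. It does not: Assumption \ref{ass-Levy-measure} only asserts that \emph{some} pair $(\rho_n,\epsilon_n)$ satisfying \eqref{Levy-measure-1} and \eqref{Levy-measure-2a} exists, and enlarging $\epsilon_n$ makes \eqref{Levy-measure-1} strictly harder, so $\epsilon_n$ cannot be chosen at will; meanwhile $C_n$ grows like $n$ (the length of $[-x\vee t,T]$ on $\Theta_n$). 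Concretely, take $\delta_t(T,x,\xi)=-|\xi|$ and let $\nu$ have density $e^{-2|\xi|}$ away from the origin: then \eqref{Levy-measure-1}, \eqref{Levy-measure-2a} hold with $\rho_n(\xi)=|\xi|$ and any $\epsilon_n\le 1$, but $e^{C_n\rho_n}$ is not $\nu$-integrable once $C_n\ge 2$, so no admissible choice of $(\rho_n,\epsilon_n)$ makes your majorant integrable; indeed in that situation even the required domination of $|e^{\Delta}-1-\Delta|$ by an integrable function is impossible if one only knows the pointwise bound on $\delta$.

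The paper circumvents exactly this point by never letting the length factor reach the exponent: its first step extracts, from Assumption \ref{ass-Levy-measure} together with the definition \eqref{Delta-tilde} of $\Delta$, a single $\rho_n$ (with its $\epsilon_n$) that satisfies \eqref{Levy-measure-2}, i.e.\ dominates $|\Delta_t(T,x,\xi)|$ itself on $\Theta_n$, and not merely $|\delta_t(T,x,\xi)|$. With $|\Delta|\le\rho_n$ the Taylor bounds give $\rho_n^2e^{\rho_n}$, and on $\{\rho_n>1\}$ the inequality $x^2e^x\le\tfrac{2}{\epsilon^2}e^{(1+\epsilon)x}$ applies with the \emph{same} $\epsilon_n$, which is precisely what \eqref{Levy-measure-1} integrates. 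Whether you view that extraction as part of the intended reading of the assumption or as a strengthening of it, it is the ingredient your argument is missing: with only the literal bound \eqref{Levy-measure-2a} on $\delta$, your final domination step fails and, as the example shows, cannot be repaired by any choice of $\epsilon_n$.
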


\begin{proof}
By Assumption \ref{ass-Levy-measure} and Definition (\ref{Delta-tilde}) of $\Delta$, for each $n \in \bbn$ there exist a measurable mapping $\rho_n : E \rightarrow \bbr_+$ and a constant $\epsilon_n > 0$ such that conditions (\ref{Levy-measure-1}), (\ref{Levy-measure-2a}) are fulfilled and we have
\begin{align}\label{Levy-measure-2} 
&|\Delta_t(T,x,\xi)| \leq \rho_n(\xi) \quad \text{for all $(t,T,x) \in \Theta_n$ and $\xi \in E$.}
\end{align}
Let $n \in \bbn$ be arbitrary. We define the measurable mapping
\begin{align*}
\pi_n : E \rightarrow \bbr_+, \quad \pi_n(\xi) := \exp(1) \rho_n(\xi)^2 \mathbbm{1}_{\{ \rho_n \leq 1 \}} + \frac{2}{\epsilon_n^2} e^{(1+\epsilon_n) \rho_n(\xi)} \mathbbm{1}_{\{ \rho_n > 1 \}}.
\end{align*}
Then the integrability condition (\ref{pi-integrable}) is satisfied due to (\ref{Levy-measure-1}). Note that for each $m \in \bbn_0$ we have the estimate
\begin{align*}
\bigg| e^x - \sum_{k=0}^{m-1} \frac{x^k}{k!} \bigg| \leq \sum_{k=m}^{\infty} \frac{|x|^k}{k!} = |x|^m \sum_{k=0}^{\infty} \frac{|x|^k}{(k+m)!} \leq |x|^{m} e^{|x|}, \quad x \in \bbr.
\end{align*}
Moreover, for all $\epsilon > 0$ and $x > 1$ we have
\begin{align*}
x^2 e^x = \frac{2}{\epsilon^2} \frac{(\epsilon x)^2}{2} e^x \leq \frac{2}{\epsilon^2} e^{\epsilon x} e^x = \frac{2}{\epsilon^2} e^{(1+\epsilon) x}.
\end{align*}
Consequently, by (\ref{Levy-measure-2a}), (\ref{Levy-measure-2}) we deduce (\ref{e-Taylor-int}), (\ref{e-Taylor}).
\end{proof}

\begin{proposition}\label{prop-G-dynamics}
For all $(T,x) \in \Xi$ the $\bbf$-survival process $G(T,x)$ is an It\^{o} process with dynamics
\begin{equation}\label{G-dynamics}
\begin{aligned}
&G_t(T,x) = G_0(T,x) 
\\ &\quad + \int_{0}^t G_s(T,x) \bigg( A_s(T,x) + \frac{1}{2} \|\Sigma_s(T,x) \|_{L_2^0(\bbr)}^2 
\\ &\qquad\qquad\qquad\quad + \int_E \big( e^{\Delta_s(T,x,\xi)} - 1 - \Delta_s(T,x,\xi) \big) \nu(d\xi) \bigg) ds
\\ &\quad + \int_{0}^t G_s(T,x) \Sigma_s(T,x) dW_s
\\ &\quad + \int_{0}^t \int_E G_{s-}(T,x) \big( e^{\Delta_s(T,x,\xi)} - 1 \big) (\mathfrak{p}(ds,d\xi) - \nu(d\xi)ds), \quad t \in [0,T].
\end{aligned}
\end{equation}
\end{proposition}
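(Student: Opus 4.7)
The plan is to derive (\ref{G-dynamics}) directly from (\ref{G-dynamics-pre}) of Proposition~\ref{prop-G-dynamics-pre} by combining the two jump-driven contributions into a single compensated Poisson integral with integrand $G_{s-}(T,x)\bigl(e^{\Delta_s(T,x,\xi)}-1\bigr)$, with the leftover predictable mass absorbed into the drift. This is the standard cosmetic manipulation that converts a stochastic exponential written as a linear compensated jump part plus a nonlinear ordinary jump part into its canonical semimartingale form.

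First, I would verify that all integrals involved are well-defined. Because $G_{s-}(T,x)\le 1$ (as a conditional probability), Lemma~\ref{lemma-A-loc} together with (\ref{pi-integrable}) and (\ref{e-Taylor-int}) yields
\begin{align*}
\bigl|G_{s-}(T,x)(e^{\Delta_s(T,x,\xi)}-1-\Delta_s(T,x,\xi))\bigr|\le \pi_n(\xi) \quad \text{on } \Theta_n,
\end{align*}
with $\pi_n\in \mathcal{L}^1(\nu)$. After localizing by a sequence of stopping times reducing to the compact sets $\Theta_n$ (which is legitimate because of the continuity recorded in Remark~\ref{remark-a-capital}), both the ordinary jump integral $\int_0^t\int_E G_{s-}(T,x)(e^{\Delta_s(T,x,\xi)}-1-\Delta_s(T,x,\xi))\,\mathfrak{p}(ds,d\xi)$ and its predictable compensator $\int_0^t\int_E G_{s-}(T,x)(e^{\Delta_s(T,x,\xi)}-1-\Delta_s(T,x,\xi))\,\nu(d\xi)\,ds$ exist absolutely, so the associated compensated integral is a well-defined local martingale. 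In addition, combining Assumption~\ref{ass-Levy-measure} with the definition of $\Delta$ in (\ref{Delta-tilde}) gives $\int_E\Delta_s(T,x,\xi)^2\,\nu(d\xi)<\infty$ uniformly on compact subsets of $\Theta$, so the compensated integral of $G_{s-}(T,x)\Delta_s(T,x,\xi)$ already appearing in (\ref{G-dynamics-pre}) is itself a locally square-integrable martingale.

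Second, I would perform the algebraic rewriting. Splitting the uncompensated jump integral in (\ref{G-dynamics-pre}) into its compensated part plus its $\nu\otimes ds$-compensator and then adding the compensated integral with integrand $\Delta_s(T,x,\xi)$, linearity yields
\begin{align*}
&\int_0^t\int_E G_{s-}(T,x)\,\Delta_s(T,x,\xi)\,(\mathfrak{p}(ds,d\xi)-\nu(d\xi)ds)\\
&\qquad+\int_0^t\int_E G_{s-}(T,x)\,(e^{\Delta_s(T,x,\xi)}-1-\Delta_s(T,x,\xi))\,\mathfrak{p}(ds,d\xi)\\
&=\int_0^t\int_E G_{s-}(T,x)\,(e^{\Delta_s(T,x,\xi)}-1)\,(\mathfrak{p}(ds,d\xi)-\nu(d\xi)ds)\\
&\qquad+\int_0^t\int_E G_{s-}(T,x)\,(e^{\Delta_s(T,x,\xi)}-1-\Delta_s(T,x,\xi))\,\nu(d\xi)\,ds.
\end{align*}
Substituting this into (\ref{G-dynamics-pre}) and absorbing the final $\nu\otimes ds$ term into the drift produces exactly the decomposition (\ref{G-dynamics}); the Wiener integral and the drift contributions $A_s(T,x)$ and $\tfrac{1}{2}\|\Sigma_s(T,x)\|_{L_2^0(\bbr)}^2$ are left unchanged.

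The main obstacle is the integrability bookkeeping of the first step, because Assumption~\ref{ass-Levy-measure} only provides exponential integrability of order $1+\epsilon_n$, which is not sufficient to derive an $L^2(\nu)$-bound on $e^{\Delta}-1$ by itself. The remedy is to work instead with the Taylor remainder $e^{x}-1-x$, which is quadratic near the origin and dominated by $e^{(1+\epsilon_n)|x|}$ far from it, so that absolute $\nu\otimes ds$-integrability (not merely square-integrability) is available; this is precisely the content of Lemma~\ref{lemma-A-loc}. Once the integrability bookkeeping is settled, the remaining manipulation is purely algebraic.
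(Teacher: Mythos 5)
Your argument matches the paper's proof: both rest on the bound $|G|\leq 1$ together with estimate (\ref{e-Taylor-int}) of Lemma~\ref{lemma-A-loc} to ensure that $G_{s-}(T,x)\bigl(e^{\Delta_s(T,x,\xi)}-1-\Delta_s(T,x,\xi)\bigr)$ is absolutely $\nu(d\xi)ds$-integrable, and then split that uncompensated jump integral in (\ref{G-dynamics-pre}) into its compensated part plus its compensator and merge it with the compensated $\Delta$-integral (the paper simply cites Proposition~II.1.28 of Jacod--Shiryaev for this step). Only cosmetic remark: no stopping-time localization is needed, since for fixed $(T,x)$ one deterministic $n$ already gives $(t,T,x)\in\Theta_n$ for all $t\in[0,T]$.
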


\begin{proof}
Let $(T,x) \in \Xi$ be arbitrary. Then there exists $n \in \bbn$ such that $(t,T,x) \in \Theta_n$ for all $t \in [0,T]$. Since $|G(T,x)| \leq 1$, by estimate (\ref{e-Taylor-int}) from Lemma \ref{lemma-A-loc} we obtain
\begin{align*}
\int_0^t \int_E \big| G_s(T,x) \big( e^{\Delta_s(T,x,\xi)} - 1 - \Delta_s(T,x,\xi) \big) \big| \nu(d\xi) ds \leq \int_E \pi_n(\xi) \nu(d\xi), \quad t \in [0,T]
\end{align*}
showing that this process belongs to $\acal_{\rm loc}^+$, see \ci[Sec. I.3]{Jacod-Shiryaev}.
Now, applying \ci[Prop. II.1.28]{Jacod-Shiryaev}, by dynamics (\ref{G-dynamics-pre}) we obtain (\ref{G-dynamics}).
\end{proof}

Now, we are ready to provide the proof of Theorem \ref{thm-drift-HJM-mu}:

\begin{proof}[Proof of Theorem \ref{thm-drift-HJM-mu}] By Assumption \ref{StAss}, Remark \ref{rem-loc-martingale} and Proposition \ref{prop-G-dynamics}, the forward mortality rates (\ref{mortality-forces}) are consistent if and only if for each $(T,x) \in \Xi$ we have
\begin{equation}\label{equiv-drift-1}
\begin{aligned}
&A_t(T,x) + \frac{1}{2} \|\Sigma_t(T,x) \|_{L_2^0(\bbr)}^2 
+ \int_E \big( e^{\Delta_t(T,x,\xi)} - 1 - \Delta_t(T,x,\xi) \big) \nu(d\xi) = 0
\\ &\text{$d \mathbb{P} \otimes dt$--almost surely on $\Omega \times [0,T]$,} \quad \text{for all $(T,x) \in \Xi$.}
\end{aligned}
\end{equation}
By Remark \ref{remark-a-capital}, Lemma \ref{lemma-A-loc} and Lebesgue's dominated convergence theorem, the left-hand side of (\ref{equiv-drift-1}) is continuous in $(T,x)$. Thus, condition (\ref{equiv-drift-1}) is equivalent to
\begin{equation}\label{equiv-drift-2}
\begin{aligned}
&A_t(T,x) + \frac{1}{2} \|\Sigma_t(T,x) \|_{L_2^0(\bbr)}^2 
+ \int_E \big( e^{\Delta_t(T,x,\xi)} - 1 - \Delta_t(T,x,\xi) \big) \nu(d\xi) = 0
\\ &\text{for all $(T,x) \in \Xi$ with $T \geq t$}, \quad \text{$d \mathbb{P} \otimes dt$--almost surely on $\Omega \times \bbr_+$.}
\end{aligned}
\end{equation}
By Remark \ref{remark-a-capital}, Lemma \ref{lemma-A-loc} and Lebesgue's dominated convergence theorem, the left-hand side of (\ref{equiv-drift-2}) is even continuously differentiable in $T \geq -x \vee t$. Since the left-hand side of (\ref{equiv-drift-2}) evaluated at $T = -x \vee t$ vanishes, condition (\ref{equiv-drift-2}) is satisfied if and only if
\begin{equation}\label{drift-equi}
\begin{aligned}
&\alpha_t(T,x) = - \langle \sigma_t(T,x), \Sigma_t(T,x) \rangle_{L_2^0(\bbr)} - \int_E \delta_t(T,x,\xi) \big( e^{\Delta_t(T,x,\xi)} - 1 \big) \nu(d\xi)
\\ &\text{for all $(T,x) \in \Xi$ with $T \geq t$}, \quad \text{$d \mathbb{P} \otimes dt$--almost surely on $\Omega \times \bbr_+$,}
\end{aligned}
\end{equation}
and this is equivalent to (\ref{drift-cond}).
\end{proof}

\subsection{Proof of Theorem \ref{thm-drift-j-1}}\label{sec-proof-2}

In this appendix, we provide the proof of Theorem \ref{thm-drift-j-1}. We impose the following assumption:

\begin{assumption}\label{ass-a}
We suppose that the following conditions are satisfied:
\begin{enumerate}
\item $j_0$ is $\bcal(\Xi)$-measurable, $a$ and $b$ are $\mathcal{F}_{\infty} \otimes \mathcal{B}(\Theta)$-measurable, and $c$ is $\mathcal{F}_{\infty} \otimes \mathcal{B}(\Theta) \otimes \mathcal{E}$-measurable.

\item For all $(T,x) \in \Xi$ the processes $a(T,x)$ and $b(T,x)$ are optional, and $c(T,x)$ is predictable.

\item For each $x \in \bbr$ and every bounded Borel set $B \subset \Xi_x$ we have
$\int_B |j_0(T,x)| dT < \infty$.

\item For each $x \in \bbr$ and every bounded Borel set $B \subset \Theta_x$ there are random variables $X^{a} : \Omega \rightarrow \bbr$ and $X^{b}, X^{c} \in \lcal^2(\Omega)$ such that
\begin{equation}\label{cond-a}
\begin{aligned}
&|a_t(T,x)| \leq X^{a}, \quad \| b_t(T,x) \|_{L_2^0(\bbr)} \leq X^{b} \quad \text{and} \\ &\| c_t(T,x) \|_{\lcal_{\nu}^2(\bbr)} \leq X^{c} \quad \text{for all $(t,T) \in B$.}
\end{aligned}
\end{equation}

\item For all $t \in \bbr_+$ and $\xi \in E$ the mappings $(T,x) \mapsto a_t(T,x)$, $(T,x) \mapsto b_t(T,x)$, $(T,x) \mapsto c_t(T,x,\xi)$ are continuous.
\end{enumerate}
\end{assumption}

Condition (\ref{cond-a}) ensures that all subsequent stochastic integrals regarding $a$, $b$ and $c$ exist. In particular, for each $x \in \bbr$ and every bounded Borel set $B \subset \Theta_x$ we have
\begin{equation}\label{cond-a-Fubini}
\begin{aligned}
&\iint_B |a_t(T,x)| dt dT < \infty, \quad
\bbe \bigg[ \iint_B \| b_t(T,x) \|_{L_2^0(\bbr)}^2 dt dT \bigg] < \infty \quad \text{and}
\\ &\bbe \bigg[ \iint_B \| c_t(T,x) \|_{\lcal_{\nu}^2(\bbr)}^2 dt dT \bigg] < \infty.
\end{aligned}
\end{equation}
This ensures that we may apply the classical Fubini theorem and the stochastic Fubini theorems (Theorem 2.8 in \ci{Gawarecki-Mandrekar} and Theorem A.2 in \ci{SPDE}) later on.
Furthermore, Assumption \ref{ass-a} guarantees that the processes $\alpha$, $\sigma$ and $\delta$ defined in (\ref{def-alpha-a}) fulfill Assumption \ref{ass-alpha}. In addition, we require the following assumption:

\begin{assumption}\label{ass-Levy-measure-c}
We suppose that for each $n \in \bbn$ there exist a measurable mapping $\rho_n : E \rightarrow \bbr_+$ and a constant $\epsilon_n > 0$ such that (\ref{Levy-measure-1}) is satisfied and we have
\begin{align}\label{Levy-measure-c} 
&|c_t(T,x,\xi)| \leq \rho_n(\xi) \quad \text{for all $(t,T,x) \in \Theta_n$ and $\xi \in E$.}
\end{align}
\end{assumption}

\begin{lemma}\label{lemma-A-loc-next}
For each $n \in \bbn$ there exists a measurable mapping $\pi_n : E \rightarrow \bbr_+$ such that conditions (\ref{pi-integrable})--(\ref{e-Taylor}) are fulfilled and we have
\begin{align}\label{est-c-Taylor} 
&\big| c_t(T,x,\xi) \big( e^{\Delta_t(T,x,\xi)} - 1 \big) \big| \leq \pi_n(\xi) \quad \text{for all $(t,T,x) \in \Theta_n$ and $\xi \in E$,}
\\ \notag &\bigg| \bigg( \int_t^T c_t(u,T+x-u,\xi) du \bigg) \bigg( \int_{-x \vee t}^T c_t(u,x,\xi)du \bigg) e^{\Delta_t(T,x,\xi)} \bigg| \leq \pi_n(\xi) 
\\ \label{est-delta-Taylor} &\quad \text{for all $(t,T,x) \in \Theta_n$ and $\xi \in E$.}
\end{align}
\end{lemma}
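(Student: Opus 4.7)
The strategy is to adapt the proof of Lemma \ref{lemma-A-loc} by first transferring the pointwise bound from $c$ (provided by Assumption \ref{ass-Levy-measure-c}) to $\delta$ and $\Delta$, and then deriving the two new estimates (\ref{est-c-Taylor}) and (\ref{est-delta-Taylor}) by the same exponential-type inequalities used previously.

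First, I would use the defining relation (\ref{def-alpha-a}), namely $\delta_t(T,x,\xi) = -\int_t^T c_t(u,T+x-u,\xi)du$, together with (\ref{Delta-tilde}) which expresses $\Delta$ as an integral of $\delta$. On the compact set $\Theta_n$ the integration intervals have uniformly bounded length, so Assumption \ref{ass-Levy-measure-c} yields pointwise bounds $|\delta_t(T,x,\xi)| \leq K_n^{(1)} \rho_n(\xi)$ and $|\Delta_t(T,x,\xi)| \leq K_n^{(2)} \rho_n(\xi)$ on $\Theta_n$ for constants $K_n^{(1)}, K_n^{(2)}$ depending only on the diameter of $\Theta_n$. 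By suitably rescaling $\rho_n$ to absorb these constants (and adjusting $\epsilon_n$ to preserve (\ref{Levy-measure-1})), I may assume without loss of generality that $|c|, |\delta|, |\Delta| \leq \rho_n(\xi)$ on $\Theta_n$. In particular, Assumption \ref{ass-Levy-measure} is in force, so Lemma \ref{lemma-A-loc} furnishes a function $\pi_n^{(0)}$ satisfying (\ref{pi-integrable})--(\ref{e-Taylor}).

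Second, I would derive the two new bounds directly. Applying the elementary inequality $|e^x - 1| \leq |x| e^{|x|}$ with $|c| \leq \rho_n$ and $|\Delta| \leq \rho_n$ gives
\[
\big| c_t(T,x,\xi) \big( e^{\Delta_t(T,x,\xi)} - 1 \big) \big| \leq \rho_n(\xi)^2 e^{\rho_n(\xi)}.
\]
For (\ref{est-delta-Taylor}), each of the two integral factors $\int_t^T c_t(u,T+x-u,\xi)du$ and $\int_{-x\vee t}^T c_t(u,x,\xi)du$ is bounded in modulus by $\rho_n(\xi)$ (since the bounded integration lengths are already absorbed into the rescaled $\rho_n$), while $e^{\Delta_t(T,x,\xi)} \leq e^{\rho_n(\xi)}$, so the entire expression is again dominated by $\rho_n(\xi)^2 e^{\rho_n(\xi)}$.

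Third, I would verify integrability of $\rho_n^2 e^{\rho_n}$ against $\nu$ using the same splitting trick as in Lemma \ref{lemma-A-loc}: on $\{\rho_n \leq 1\}$ the function is bounded by $e\,\rho_n^2$, which is $\nu$-integrable by the first summand of (\ref{Levy-measure-1}); on $\{\rho_n > 1\}$ the inequality $x^2 \leq \tfrac{2}{\epsilon_n^2} e^{\epsilon_n x}$ for $x \geq 1$ yields $\rho_n^2 e^{\rho_n} \leq \tfrac{2}{\epsilon_n^2} e^{(1+\epsilon_n)\rho_n}$, integrable by the second summand. Defining
\[
\pi_n := \pi_n^{(0)} + e\,\rho_n^2 \mathbbm{1}_{\{\rho_n \leq 1\}} + \tfrac{2}{\epsilon_n^2} e^{(1+\epsilon_n)\rho_n} \mathbbm{1}_{\{\rho_n > 1\}}
\]
then gives a single measurable, $\nu$-integrable function dominating all the required quantities. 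The principal technical point is the rescaling in the first step: enlarging $\rho_n$ by a constant factor could in principle destroy the exponential integrability. Since Assumption \ref{ass-Levy-measure-c} permits the choice of $\rho_n$ and $\epsilon_n$ anew for each $n$, this is bookkeeping rather than a conceptual obstacle — one selects $\rho_n$ and $\epsilon_n$ from the outset to absorb the diameter constants of $\Theta_n$.
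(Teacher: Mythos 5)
Your proposal matches the paper's proof in essentially every step: the paper likewise transfers the bound of Assumption \ref{ass-Levy-measure-c} from $c$ to $\delta$ (and hence to $\Delta$) via the integral definitions (\ref{def-alpha-a}) and (\ref{Delta-tilde}), and then repeats the elementary inequalities $|e^x-1|\leq |x|e^{|x|}$, $x^2e^x\leq \tfrac{2}{\epsilon^2}e^{(1+\epsilon)x}$ and the splitting over $\{\rho_n\leq 1\}$ and $\{\rho_n>1\}$ from Lemma \ref{lemma-A-loc} to obtain (\ref{est-c-Taylor}) and (\ref{est-delta-Taylor}). The absorption of the interval-length constants into $\rho_n$, which you single out as the delicate point, is carried out silently in the paper's one-line reduction as well, so your write-up is, if anything, more explicit about that step than the original.
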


\begin{proof}
By Assumption \ref{ass-Levy-measure-c} and Definition (\ref{def-alpha-a}) of $\delta$, for each $n \in \bbn$ there exist a measurable mapping $\rho_n : \Omega \times E \rightarrow \bbr_+$ and a constant $\epsilon_n > 0$ such that conditions (\ref{Levy-measure-1}), (\ref{Levy-measure-c}) are fulfilled and we have (\ref{Levy-measure-2a}). Proceeding as in the proof of Lemma \ref{lemma-A-loc} yields the desired estimates (\ref{est-c-Taylor}) and (\ref{est-delta-Taylor}).
\end{proof}

Now, we are ready to provide the proof of Theorem \ref{thm-drift-j-1}:

\begin{proof}[Proof of Theorem \ref{thm-drift-j-1}]
Let $x \in \bbr$ be arbitrary.
By Definition (\ref{def-gamma}) of $\gamma(x)$ and the dynamics (\ref{mortality-forces}) of $\mu(T,x)$ we have (\ref{spot-shifted-simple}), proving the first statement. In particular, for all $(T,x) \in \Xi$ we have
\begin{equation}\label{spot-shifted}
\begin{aligned}
\gamma_t(T+x-t) &= \mu_0(t,T+x-t) + \int_0^t \alpha_s(t,T+x-t) ds + \int_0^t \sigma_s(t,T+x-t) dW_s
\\ &\quad + \int_0^t \int_E \delta_s(t,T+x-t,\xi) (\mathfrak{p}(ds,d\xi) - \nu(d\xi)ds), \quad t \geq 0.
\end{aligned}
\end{equation}
Let $(T,x) \in \Xi$ be arbitrary. We also fix an arbitrary $t \in [0,T]$. By the dynamics (\ref{j-dynamics}) of $j(T,x)$ we have
\begin{equation}\label{j-integrated}
\begin{aligned}
&- \int_{t}^T j_t(u,T+x-u)du = - \int_{t}^T j_0(u,T+x-u) du - \int_{t}^T \int_{0}^t a_s(u,T+x-u)ds du
\\ &\quad - \int_{t}^T \int_{0}^t b_s(u,T+x-u)dW_s du 
\\ &\quad - \int_{t}^T \int_{0}^t \int_E c_s(u,T+x-u,\xi) (\mathfrak{p}(ds,d\xi) - \nu(d\xi)ds) du.
\end{aligned}
\end{equation}
We shall now consider the terms in (\ref{spot-shifted}) and (\ref{j-integrated}) separately. By Definition (\ref{def-mu-0-extend}) we have 
\begin{align*}
&\mu_0(t,T+x-t) - \int_{t}^T j_0(u,T+x-u) du 
\\ &= \gamma_0(T+x) - \int_0^t j_0(u,T+x-u)du - \int_t^T j_0(u,T+x-u)du 
\\ &= \gamma_0(T+x) - \int_0^T j_0(u,T+x-u)du = \mu_0(T,x).
\end{align*}
By (\ref{cond-a-Fubini}) we may apply the classical Fubini theorem for the following calculation. Incorporating Definition (\ref{def-alpha-a}) we obtain
\begin{align*}
&\int_0^t \alpha_s(t,T+x-t)ds - \int_t^T \int_0^t a_s(u,T+x-u)ds du
\\ &= \int_0^t \bigg( \alpha_s(t,T+x-t) - \int_t^T a_s(u,T+x-u)du \bigg) ds
\\ &= \int_0^t \bigg( - \int_s^t a_s(u,T+x-u)du - \int_t^T a_s(u,T+x-u)du \bigg) ds
\\ &= -\int_0^t \int_s^T a_s(u,T+x-u)du ds = \int_{0}^t \alpha_s(T,x) ds.
\end{align*}
Analogous calculations yield 
$$
\int_{0}^t \sigma_s(t,T+x-t) d W_s - \int_{t}^T \int_{0}^t b_s(u,T+x-u) dW_s du = \int_{0}^t \sigma_s(T,x) d W_s, 
$$
\begin{align*}
&\int_{0}^t \int_E \delta_s(u,T+x-u,\xi) (\mathfrak{p}(ds,d\xi) - \nu(d\xi)ds)  
\\ &\quad - \int_{t}^T \int_{0}^t \int_E c_s(u,T+x-u,\xi)(\mathfrak{p}(ds,d\xi) - \nu(d\xi)ds) du 
\\ &= \int_{0}^t \int_E \delta_s(T,x,\xi) (\mathfrak{p}(ds,d\xi) - \nu(d\xi)ds).
\end{align*}
Note that we may apply the respective stochastic Fubini theorems (\ci[Thm. 2.8]{Gawarecki-Mandrekar} and \ci[Thm. A.2]{SPDE}) due to condition (\ref{cond-a-Fubini}).
Consequently, by (\ref{spot-shifted}), (\ref{j-integrated}) and the previous identities we arrive at identity (\ref{identity-mu-gamma-j}), establishing the second statement.

Now, suppose that, in addition, Assumption \ref{ass-Levy-measure-c} is satisfied, and the drift $a$ is given by (\ref{a-defined}). By Definitions (\ref{def-alpha-a}), for all $\xi \in E$ we have
\begin{align}\label{zero-at-t-x}
\sigma_t(t,x) = 0 \quad \text{and} \quad \delta_t(t,x,\xi) = 0 \quad \text{for all $(t,x) \in \Xi$},
\end{align}
\begin{align}\label{b-directional-sigma}
b_t(T,x) = -(\partial_T - \partial_x)\sigma_t(T,x) \quad \text{and} \quad c_t(T,x,\xi) = -(\partial_T - \partial_x)\delta_t(T,x,\xi).
\end{align}
Let $(T,x) \in \Xi$ be arbitrary. We also fix an arbitrary $t \in [0,T]$.
In view of Definition (\ref{Sigma-tilde}) of $\Sigma$, if $-x \geq t$,   we obtain by (\ref{b-directional-sigma}), (\ref{cond-a}) and Lebesgue's dominated convergence theorem
\begin{align*}
&(\partial_T - \partial_x) \Sigma_t(T,x) = \partial_h \big|_{h=0} \Sigma_t(T+h,x-h) = -\partial_h \big|_{h=0} \int_{-(x-h)}^{T+h} \sigma_t(u,x-h)du
\\ &= -\partial_h \big|_{h=0} \int_{-x+h}^{T+h} \sigma_t(u,x-h)du = -\partial_h \big|_{h=0} \int_{-x}^{T} \sigma_t(u+h,x-h)du
\\ &= -\int_{-x}^{T} \partial_h \big|_{h=0} \sigma_t(u+h,x-h)du = -\int_{-x}^T (\partial_T - \partial_x) \sigma_t(u,x) du = \int_{-x}^T b_t(u,x)du;
\end{align*}
and, if $t \geq -x$, we observe by (\ref{zero-at-t-x}), (\ref{b-directional-sigma}) and (\ref{cond-a}) together with Lebesgue's dominated convergence theorem
\begin{align*}
&(\partial_T - \partial_x) \Sigma_t(T,x) = \partial_h \big|_{h=0} \Sigma_t(T+h,x-h) = -\partial_h \big|_{h=0} \int_{t}^{T+h} \sigma_t(u,x-h)du
\\ &= -\partial_h \big|_{h=0} \int_{t-h}^{T} \sigma_t(u+h,x-h)du = \sigma_t(t,x) - \int_t^T \partial_h \big|_{h=0} \sigma_t(u+h,x-h) du
\\ &= -\int_t^T (\partial_T - \partial_x) \sigma_t(u,x) du = \int_t^T b_t(u,x)du.
\end{align*}
Performing analogous calculations with $\Delta_t$, we deduce that for each $\xi \in E$ the directional derivative $\partial_T - \partial_x$ of the mappings $(T,x) \mapsto \Sigma_t(T,x)$ and $(T,x) \mapsto \Delta_t(T,x,\xi)$ in $(T,x)$ exists, and we have
\begin{align}\label{Sigma-directional} 
(\partial_T - \partial_x) \Sigma_t(T,x) = \int_{-x \vee t}^T b_t(u,x)du \quad \text{and} \quad (\partial_T - \partial_x) \Delta_t(T,x) = \int_{-x \vee t}^T c_t(u,x,\xi)du.
\end{align}
We define the stochastic process $\beta : \Omega \times \Theta \rightarrow \mathbb{R}$ as
\begin{align*}
\beta_t(T,x) := -\langle \sigma_t(T,x), \Sigma_t(T,x) \rangle_{L_2^0(\bbr)} - \int_E \delta_t(T,x,\xi) \big( e^{\Delta_t(T,x,\xi)} - 1 \big) \nu(d\xi).
\end{align*}
Taking into account (\ref{b-directional-sigma}), (\ref{Sigma-directional}),  Definition (\ref{def-alpha-a}) of $\sigma$ and $\delta$, and estimates (\ref{est-c-Taylor}), (\ref{est-delta-Taylor}) together with Lebesgue's dominated convergence theorem, the directional derivative $\partial_T - \partial_x$ of $\beta$ in $(T,x)$ exists, and we have
\begin{align*}
&-(\partial_T - \partial_x)\beta_t(T,x) 
\\ &= \langle \sigma_t(T,x),(\partial_T - \partial_x)\Sigma_t(T,x) \rangle_{L_2^0(\mathbb{R})}
+ \langle (\partial_T - \partial_x) \sigma_t(T,x),\Sigma_t(T,x) \rangle_{L_2^0(\mathbb{R})}
\\ &\quad +\int_E \delta_t(T,x,\xi) \cdot (\partial_T - \partial_x) \big( e^{\Delta_t(T,x,\xi)} - 1 \big) \nu(d\xi)
\\ &\quad + \int_E (\partial_T - \partial_x) \delta_t(T,x,\xi) \cdot \big( e^{\Delta_t(T,x,\xi)} - 1 \big) \nu(d\xi),
\end{align*}
and hence
\begin{align*}
-(\partial_T - \partial_x)\beta_t(T,x) &= \langle \sigma_t(T,x),(\partial_T - \partial_x)\Sigma_t(T,x) \rangle_{L_2^0(\mathbb{R})}
- \langle b_t(T,x),\Sigma_t(T,x) \rangle_{L_2^0(\mathbb{R})}
\\ &\quad + \int_E \delta_t(T,x,\xi) \cdot (\partial_T - \partial_x) \Delta_t(T,x,\xi) \cdot e^{\Delta_t(T,x,\xi)} \nu(d\xi)
\\ &\quad - \int_E c_t(T,x,\xi) \big( e^{\Delta_t(T,x,\xi)} - 1 \big) \nu(d\xi),
\end{align*}
so that we arrive at
\begin{align*}
-(\partial_T - \partial_x)\beta_t(T,x)  &= -\Big\langle \int_t^T b_t(u,T+x-u)du,\int_{-x \vee t}^T b_t(u,x)du \Big\rangle_{L_2^0(\mathbb{R})}
\\ &\quad + \Big\langle b_t(T,x),\int_{-x \vee t}^T \sigma_t(u,x)du \Big\rangle_{L_2^0(\mathbb{R})}
\\ &\quad -\int_E \bigg( \int_t^T c_t(u,T+x-u,\xi) du \bigg) \bigg( \int_{-x \vee t}^T c_t(u,x,\xi)du \bigg) 
\\ &\qquad\qquad \exp \bigg( -\int_{-x \vee t}^T \delta_t(u,x,\xi) du \bigg) \nu(d\xi)
\\ &\quad - \int_E c_t(T,x,\xi) \bigg[ \exp \bigg( -\int_{-x \vee t}^T \delta_t(u,x,\xi) du \bigg) - 1 \bigg] \nu(d\xi).
\end{align*}
Applying Definition (\ref{def-alpha-a}) of $\sigma$ and $\delta$ once again, we obtain by Definition~\eqref{a-defined}
\begin{align*}
-(\partial_T - \partial_x)\beta_t(T,x) = a_t(T,x),
\end{align*}
and hence, by Definition (\ref{def-alpha-a}) of $\alpha$ we deduce that
\begin{align*}
\alpha_t(T,x) = -\int_t^T a_t(u,T+x-u)du = \beta_t(T,x) - \beta_t(t,T+x-t) = \beta_t(T,x).
\end{align*}
Consequently, by Theorem \ref{thm-drift-HJM} the $\bbf$-forward mortality rates $\mu$ are consistent, completing the proof. 
\end{proof}

\subsection{Proof of Theorem \ref{thm-drift-HJM}}\label{sec-proof-3}

In this appendix, we provide the proof of Theorem \ref{thm-drift-HJM}. We impose the following assumption:

\begin{assumption}\label{ass-alpha-bar}
We suppose that the following conditions are satisfied:
\begin{enumerate}
\item The processes $\bar{\alpha}$ and $\bar{\sigma}$ are optional, and $\bar{\delta}$ is predictable.

\item For each bounded Borel set $B \subset \bbr_+$ there are random variables $X^{\bar{\alpha}} : \Omega \rightarrow \bbr$ and $X^{\bar{\sigma}}, X^{\bar{\delta}} \in \lcal^2(\Omega)$ such that
\begin{align*}
\| \bar{\alpha}_t \| \leq X^{\bar{\alpha}}, \quad \| \bar{\sigma}_t \|_{L_2^0(H)} \leq X^{\bar{\sigma}} \quad \text{and} \quad
\| \bar{\delta}_t \|_{\lcal_{\nu}^2(H)} \leq X^{\bar{\delta}} \quad \text{for all $t \in B$.}
\end{align*}
\end{enumerate}
\end{assumption}

Note that we may regard these processes as mappings $\bar{\alpha} : \Omega \times \bbr_+ \times \Xi \rightarrow \bbr$, $\bar{\sigma} : \Omega \times \bbr_+ \times \Xi \rightarrow L_2^0(\bbr)$ and $\bar{\delta} : \Omega \times \bbr_+ \times \Xi \times E \rightarrow \bbr$. We define the stochastic processes $\alpha : \Omega \times \Theta \rightarrow \mathbb{R}$, $\sigma : \Omega \times \Theta \rightarrow L_2^0(\mathbb{R})$ and $\delta : \Omega \times \Theta \times E \rightarrow \bbr$ as
\begin{align}\label{alpha-by-bar}
\alpha := \bar{\alpha} \circ \phi, \quad \sigma := \bar{\sigma} \circ \phi \quad \text{and} \quad \delta := \bar{\delta} \circ \phi.
\end{align}
By Assumption \ref{ass-alpha-bar} and estimate (\ref{point-eval-uniform}), the processes $\alpha$, $\sigma$ and $\delta$ also fulfill Assumption \ref{ass-alpha}. In addition, we require the following assumption:

\begin{assumption}\label{ass-Levy-measure-bar}
We suppose that for each $n \in \bbn$ there exist a measurable mapping $\rho_n : E \rightarrow \bbr_+$ and a constant $\epsilon_n > 0$ such that (\ref{Levy-measure-1}) is satisfied and we have
\begin{align*} 
|\bar{\delta}_t(s,y,\xi)| &\leq \rho_n(\xi) \quad \text{for all $(t,s,y) \in \phi(\Theta_n)$ and $\xi \in E$.}
\end{align*}
\end{assumption}

Note that Assumption \ref{ass-Levy-measure-bar} implies Assumption \ref{ass-Levy-measure}. Now, we are ready to provide the proof of Theorem \ref{thm-drift-HJM}:

\begin{proof}[Proof of Theorem \ref{thm-drift-HJM}] We define the transformed $\bbf$-forward mortality rates $\mu : \Omega \times \Theta \rightarrow \bbr$ as $\mu := \bar{\mu} \circ \phi$. For all $(t,T,x) \in \Theta$ and $(t,s,y) = \phi(t,T,x) \in \bbr_+ \times \Xi$ we have
\begin{align*}
\mu_0(T,x) = \mu_0(s+t,y-t) = S_t \mu_0(s,y),
\end{align*}
and, by Definition (\ref{alpha-by-bar}) we obtain
\begin{align*}
\int_0^t \alpha_s(T,x) ds &= \int_0^t \bar{\alpha}_s(T-s,x+s) du = \int_0^t S_{t-s} \bar{\alpha}_s(T-t,x+t) du 
\\ &= \int_0^t S_{t-s} \bar{\alpha}_s(s,y) ds.
\end{align*}
Analogous calculations for $\sigma$ and $\delta$ show that for all $(T,x)$ the $\bbf$-forward mortality rates $\mu(T,x)$ have the dynamics (\ref{mortality-forces}). Moreover, using (\ref{alpha-by-bar}), a straightforward calculation shows that conditions (\ref{drift-cond}) and (\ref{drift-cond-Musiela}) are equivalent. Therefore, applying Theorem \ref{thm-drift-HJM-mu} completes the proof.
\end{proof}

\subsection{Proof of Theorem \ref{thm-main-Musiela}}\label{sec-proof-4}

In this appendix, we provide the proof of Theorem \ref{thm-main-Musiela}. We impose the following assumption:

\begin{assumption}\label{ass-a-bar}
We suppose that the following conditions are satisfied:
\begin{enumerate}
\item The processes $\bar{a}$ and $\bar{b}$ are optional, and $\bar{c}$ is predictable.

\item For each bounded Borel set $B \subset \bbr_+$ there are random variables $X^{\bar{a}} : \Omega \rightarrow \bbr$ and $X^{\bar{b}}, X^{\bar{c}} \in \lcal^2(\Omega)$ such that
\begin{align*}
\| \bar{a}_t \| \leq X^{\bar{a}}, \quad \| \bar{b}_t \|_{L_2^0(H)} \leq X^{\bar{b}} \quad \text{and} \quad
\| \bar{c}_t \|_{\lcal_{\nu}^2(H)} \leq X^{\bar{c}} \quad \text{for all $t \in B$.}
\end{align*}
\end{enumerate}
\end{assumption}

Note that we may also regard these processes as mappings $\bar{a} : \Omega \times \bbr_+ \times \Xi \rightarrow \bbr$, $\bar{b} : \Omega \times \bbr_+ \times \Xi \rightarrow L_2^0(\bbr)$ and $\bar{c} : \Omega \times \bbr_+ \times \Xi \times E \rightarrow \bbr$. We define the stochastic processes $a : \Omega \times \Theta \rightarrow \mathbb{R}$, $b : \Omega \times \Theta \rightarrow L_2^0(\mathbb{R})$ and $c : \Omega \times \Theta \times E \rightarrow \bbr$ as
\begin{align}\label{a-by-bar}
a := \bar{a} \circ \phi, \quad b := \bar{b} \circ \phi \quad \text{and} \quad c := \bar{c} \circ \phi.
\end{align}
By Assumption \ref{ass-a-bar} and estimate (\ref{point-eval-uniform}), the processes $a$, $b$ and $c$ also fulfill Assumption \ref{ass-a}.
In addition, we require the following assumption:

\begin{assumption}\label{ass-Levy-measure-c-bar}
We suppose that for each $n \in \bbn$ there exist a measurable mapping $\rho_n : E \rightarrow \bbr_+$ and a constant $\epsilon_n > 0$ such that (\ref{Levy-measure-1}) is satisfied and we have
\begin{align*}
|\bar{c}_t(s,y,\xi)| \leq \rho_n(\xi) \quad \text{for all $(t,s,y) \in \phi(\Theta_n)$ and $\xi \in E$.}
\end{align*}
\end{assumption}

Now, we are ready to provide the proof of Theorem \ref{thm-main-Musiela}:

\begin{proof}[Proof of Theorem \ref{thm-main-Musiela}] 
The identity (\ref{spot-shifted-bar}) follows from the variation of constants formula (\ref{var-const}) and the Definition (\ref{def-gamma-bar}) of the $\bbf$-spot mortality rates $\bar{\gamma}$.

We define the transformed $\bbf$-forward mortality rates $\mu : \Omega \times \Theta \rightarrow \bbr$ as $\mu := \bar{\mu} \circ \phi$ and the transformed $\bbf$-forward mortality improvements $j : \Omega \times \Theta \rightarrow \bbr$ as $j := \bar{j} \circ \phi$. Using Definitions (\ref{alpha-by-bar}) and (\ref{a-by-bar}), analogous calculations as in the proof of Theorem \ref{thm-drift-HJM} show that for all $(T,x)$ the $\bbf$-forward mortality rates $\mu(T,x)$ have the dynamics (\ref{mortality-forces}) and the $\bbf$-forward mortality rates $j(T,x)$ have the dynamics (\ref{j-dynamics}). 

Now, identity (\ref{identity-mu-gamma-j-bar}) follows from identity (\ref{identity-mu-gamma-j}) of Theorem \ref{thm-drift-j-1}. Furthermore, if 
Assumption \ref{ass-Levy-measure-c-bar} is satisfied, then condition (\ref{a-bar-defined}) and Definitions (\ref{a-by-bar}) imply that condition (\ref{a-defined}) holds true. Applying Theorem \ref{thm-drift-j-1} yields that the $\bbf$-forward mortality rates $\bar{\mu}$ are consistent, which concludes the proof.
\end{proof}
\end{appendix}

\bibliographystyle{jmr}
\bibliography{bibtex}   

%
%

\end{document}